 \newcommand{\ra}[1]{\renewcommand{\arraystretch}{#1}}
\definecolor{Gray}{gray}{0.9}
\newcolumntype{L}[1]{>{\raggedright\let\newline\\\arraybackslash\hspace{0pt}}m{#1}}
\newcolumntype{C}[1]{>{\centering\let\newline\\\arraybackslash\hspace{0pt}}m{#1}}
\newcolumntype{R}[1]{>{\raggedleft\let\newline\\\arraybackslash\hspace{0pt}}m{#1}}
\newtheorem{theorem}{Theorem}
\newtheorem{lemma}{Lemma}
\newtheorem{definition}{Definition}
\newtheorem{claim}{Claim}
\def\ps@pprintTitle{%
   \let\@oddhead\@empty
   \let\@evenhead\@empty
   \let\@evenfoot\@oddfoot
}
\begin{document}

\begin{frontmatter}

\title{A Monotone Approximate Dynamic Programming Approach for the Stochastic Scheduling,
Allocation, and Inventory Replenishment Problem: Applications to Drone and Electric Vehicle Battery Swap Stations}

%
\author[label1]{Amin Asadi}
\address[label1]{Department of Industrial Engineering, University of Arkansas\\ 4207 Bell Engineering, Fayetteville, AR 72701}
%
%
\ead{asadi@email.uark.edu}
%
\author[label1]{Sarah Nurre Pinkley\corref{cor1}}
\ead{snurre@uark.edu}

\begin{abstract}
There is a growing interest in using electric vehicles (EVs) and drones for many applications. However, battery-oriented issues, including range anxiety and battery degradation, impede adoption. Battery swap stations are one alternative to reduce these concerns \textcolor{black}{that} allow the swap of depleted for full batteries in minutes. We consider the problem of deriving actions at a battery swap station when explicitly considering the uncertain arrival of swap demand, battery degradation, and replacement. We model the operations at a battery swap station using a finite horizon Markov Decision Process model for the stochastic scheduling, allocation, and inventory replenishment problem (SAIRP), which determines when and how many batteries are charged, discharged, and replaced over time. We present theoretical proofs for the monotonicity of the value function and monotone structure of an optimal policy for special SAIRP cases. Due to the curses of dimensionality, we develop a new monotone approximate dynamic programming (ADP) method, which intelligently initializes a value function approximation using regression. In computational tests, we demonstrate the superior performance of the new regression-based monotone ADP method as compared to exact methods and other monotone ADP methods. Further, with the tests, we deduce policy insights for drone swap stations.

\end{abstract}

\begin{keyword}
Electric Vehicles and Drones; Battery Swap Station; Markov Decision Processes; Battery Degradation; Monotone Policy \textcolor{black}{and Value Function}; \textcolor{black}{Regression-based Initialization}; 
Approximate Dynamic Programming
\end{keyword}

\end{frontmatter}

\section{Introduction}\label{intro}

Electric vehicles (EVs) and drones hold great promise for revolutionizing transportation and supply chains. The  United States Department of Energy \cite{EVEverywhere} reports that EVs can reduce oil dependence and carbon emissions, but vehicle adoption is hindered by range anxiety, purchase price, recharge times, and battery degradation \citep{RJB15, Saxena2015}. Drone applications have increased in recent years; many organizations are using drones or undergoing testing to use drones for different purposes, including, but not limited to, delivery \citep{AmazonDrones, UPSDrones, DHL, RoyalMail}, transportation \citep{DubaiDrone}, and agriculture \citep{AgDrones2}. However, drone use is restricted by short flight times, long battery recharge times, and battery degradation \citep{Park17, DroneTime, ChargeTime}. An option to overcome these barriers for EVs and drones is a battery swap station. A battery swap station is a physical location that enables the automated or manual exchange of depleted batteries for full batteries in a matter of seconds to a few minutes.

Swap stations have many benefits including their ability to help reduce battery degradation. Battery degradation, or, more specifically, battery capacity degradation, is the act of the battery capacity decreasing over time with use. Each recharge and use of a battery causes a battery to degrade. Degraded battery capacity means EVs and drones have shorter maximum flight times and ranges. Thus, an interesting aspect of managing battery swap stations is that both battery charge and battery capacity are needed; however, the recharging and use of battery charge is the \emph{exact cause} of battery capacity degradation.  Thus, this presents a unique problem where \textcolor{black}{recharging batteries, which enables the system to operate in the short-term,} is harmful for long-term operation. \textcolor{black}{Although all recharging causes degradation, the regular-rate charging used at swap stations reduces the speed in which batteries degrade as compared to fast-charging \citep{Lacey13, Shirk15}.   This increases battery lifespans and causes less environmental waste from disposal.  In spite of this benefit, swap stations still must determine when to recharge and replace batteries.}

The benefits of battery swap stations are not restricted to decreasing battery degradation. Swap stations also alleviate range anxiety by allowing users to swap their batteries in a couple of minutes. Furthermore, battery swap stations are projected to be a key component within a smart grid through the use of battery-to-grid (B2G) and vehicle-to-grid (V2G). B2G and V2G enable a charged battery to \emph{discharge} the stored energy back to the power grid \citep{Dunn2011}. \textcolor{black}{In practice, several companies such as Toyota Tsusho and Chubu Electric Power in Japan \citep{Nuvve2, Nuvve}, NIO and State Grid Corporation in China \citep{Zhang2020}, and Nissan and E.ON in the UK \citep{UKv2g} have installed or plan to install V2G technology.} Swap stations can also reduce the purchase price barrier through a business plan where the swap station owns and leases the high-cost batteries \citep{mak2013infrastructure}.  For the many organizations seeking to use drones, a set of continuously operating drones is often vital. However, continuous operation is difficult because the realized flight time of a drone is often less than the recharge time \citep{DroneTime, ChargeTime}. \textcolor{black}{Thus, automated drone battery swap stations are a promising option because no downtime for recharging is necessary. 
Given the benefits and applications of swap stations, we examine the problem of optimally managing a battery swap station when considering battery degradation.}

\textcolor{black}{We model the operations at a battery swap station using the new class of stochastic scheduling, allocation, and inventory replenishment problems (SAIRPs). In SAIRPs, we decide on the number of batteries to recharge, discharge, and replace over time when faced with time-varying recharging prices, time-varying discharge revenue, uncertain non-stationary demand over time, and capacity\textcolor{black}{-}dependent swap revenue.  SAIRPs consider the \emph{key interaction} between battery charge and battery capacity and link the use and replenishment (recharging) actions of charge inventory with the degradation and replenishment needs of battery capacity inventory. Battery charge and capacity are linked because each recharge and discharge of a battery causes the battery capacity to degrade, and the level of battery capacity directly limits the maximum amount of battery charge. To replenish battery capacity, we must determine when and how many batteries to replace over time.  For SAIRPs, the combination of battery charge and battery capacity is necessary to satisfy non-stationary, stochastic demand over time.}

We \textcolor{black}{model the problem as a} finite horizon MDP model \textcolor{black}{allowing us to capture} the non-stationary elements of battery swap stations over time, including mean battery swap demand, recharging price, and discharging revenue \citep{Puterman05}. \textcolor{black}{The MDP's state space} is two-dimensional, indicating the total number of fully charged batteries and the average capacity of all batteries at the station. The action of the \textcolor{black}{model} is two-dimensional. The first dimension \textcolor{black}{indicates the} total number of batteries to recharge or discharge. The second dimension \textcolor{black}{indicates} the total number of batteries to replace. The \textcolor{black}{selected} action results in an immediate reward, \textcolor{black}{equal to profit}, comprised of capacity-dependent revenue from battery swaps, revenue from discharging batteries back to the power grid, cost from recharging batteries, and cost from replacing batteries. The \textcolor{black}{system transitions to a new state according to a discrete probability distribution representing battery swap demand over time, the current state, and the selected action. For our MDP model of the stochastic SAIRP, we seek to determine an optimal policy that maximizes the expected total reward, which is equal to the station's expected total profit.} \textcolor{black}{A standard solution method for solving MDPs is backward induction (BI) \citep{Puterman05}. We solve a set of modest-sized SAIRPs using BI to provide a baseline for comparing the approximate solution methods; however, as Asadi and Nurre Pinkley \cite{Asadi19} showed, BI is not effective for deriving optimal policies for realistic-sized SAIRPs.}

Stochastic SAIRPs suffer from the curses of dimensionality, thus, we investigate theoretical properties of the problem to inform more efficient solution methods.  \textcolor{black}{We prove} that the stochastic SAIRP has a monotone non-decreasing value function in the first, second, and both dimensions of the state. We also prove that general SAIRPs \textcolor{black}{violate the sufficient conditions for the existence of a monotone optimal policy in} the second dimension of the \textcolor{black}{state}. However, if the number of battery replacements in each decision epoch is constrained to be less than a constant upper bound, we prove there exists a monotone optimal policy for the second dimension of the \textcolor{black}{state} in the stochastic SAIRP.

To overcome the curses of dimensionality, we exploit these theoretical results and investigate efficient solution methods. 
We investigate methods \textcolor{black}{that} exploit our proven monotone structure, including Monotone Backward Induction (MBI) \citep{Puterman05} and monotone approximate dynamic programming (ADP) algorithms.  First, we examine Jiang and Powell's \cite{Jiang15} Monotone Approximate Dynamic Programming (MADP) algorithm which exploits the monotonicity of the value function.  \textcolor{black}{Next, we propose a new} regression-based Monotone ADP algorithm, which we denote MADP-RB.  In our MADP-RB, we build upon the foundation of MADP and introduce a regression-based approach to intelligently initialize the value function approximation.  

We design a comprehensive set of experiments using Latin hypercube sampling (LHS). We compare the performance of ADP methods with the BI and MBI for the LHS's generated scenarios of \textcolor{black}{a} modest size. Experimentally, we show our regression-based ADP generates near-optimal solutions for modest SAIRPs. Besides, using the same LHS scenarios, we solve large-scale SAIRPs with our proposed ADP algorithms. We demonstrate that our proposed ADP approaches can overcome the inherent curses of dimensionality of SAIRPs that BI, and MBI failed to succeed.

\emph{\textbf{Main Contributions.}} The main contributions of this work are as follows: 
(i) \textcolor{black}{we demonstrate that stochastic SAIRPs violate the sufficient conditions for the optimality of a monotone policy in the second dimension of the state} and prove the existence of a monotone optimal policy \textcolor{black}{for the second dimension of state} when an upper bound is placed on the number of batteries replaced in each decision epoch; 
(ii) we prove the monotone structure for the MDP value function;
(iii) we propose a regression-based monotone ADP method by utilizing the theoretical structure of the MDP optimal value function to intelligently approximate the initial value function and make updates \textcolor{black}{in} each iteration;
\textcolor{black}{(iv) we computationally demonstrate the superior performance of our regression-based monotone ADP algorithm and deduce managerial insights about managing battery swap stations.}

\indent  The remainder of the paper is organized as follows. In Section \ref{sec:LitReview}, we outline literature relevant to our modeling approach, solution approaches, and EV and drone applications.  In Section \ref{ProblemStatement}, we formally define the stochastic scheduling, allocation, and inventory replenishment problem as a two-dimensional Markov Decision Process. In Section \ref{TheoreticalResult}, we present theoretical results for the stochastic SAIRP.  In Section \ref{sec:Sol}, we \textcolor{black}{present solution methods and} outline the monotone ADP algorithm with regression-based initialization to solve stochastic SAIRP instances.
In Section \ref{ComputationalResults}, we present results and insights from computational tests of the solution methods and realistic instances of the stochastic SAIRP.  We summarize the contributions in Section \ref{Conclusion} and provide opportunities for future work.

\section{Literature Review} \label{sec:LitReview}

There is growing interest surrounding electric vehicles (EVs) and drones in industry and academia.  We proceed by discussing the relevant literature pertaining to (i) the EV and drone swap station application; (ii) the background knowledge for the proposed approach using aspects of \textcolor{black}{optimal timing and reliability,} inventory management\textcolor{black}{,} and equipment replacement problems; (iii) the scientific works that explain the lithium-ion battery degradation process; and (iv) ADP approaches that address the curses of dimensionality.  To the best of our knowledge, no past research has derived the structure of the optimal policy and value function for the scheduling, allocation, and inventory replenishment problem nor solved the realistic-sized instances of SAIRPs \textcolor{black}{to derive insights} for managing the operations at a battery swap station faced with battery degradation.

Swap stations were initially introduced for EVs and thus have a more extensive research base. However, there is growing interest surrounding drone battery swap stations. We first examine the work on managing the internal operations of a battery swap station that are most similar to the model presented in this paper and Asadi and Nurre Pinkley \cite{Asadi19}.  Widrick et al. \cite{Widrick16} develop an inventory control MDP for a swap station that only considers the number of batteries to recharge and discharge over time but excludes battery capacity levels, degradation, and replacement.  They prove the existence of a monotone optimal policy only when the demand is governed by a non-increasing discrete distribution (e.g., geometric). Nurre et al. \cite{Nurre14} also consider determining the optimal charging, discharging, and swapping at a swap station using a deterministic integer program \textcolor{black}{that} excludes uncertainty. Worley and Klabjan \cite{worley2011optimization} examine an EV swap station with uncertainty and seek to determine the number of batteries to purchase and recharge over time. Note, purchasing batteries is fundamentally different from battery replacement in SAIRPs. Worley and Klabjan \cite{worley2011optimization} examine the \textcolor{black}{one-time} purchase of batteries to open a swap station and do not consider purchasing decisions over time. Contrarily, we assume the initial number of batteries at the swap station is previously determined and instead consider \textcolor{black}{replacing} batteries over time.  Sun et al. \cite{Sun14} propose a constrained MDP model for determining an optimal charging policy at a single battery swap station and examine the tradeoffs between quality of service for customers and energy consumption costs.

Other research considers a mix of long-term strategic and short-term operational swap station decisions.  Schneider et al. \cite{Schneider18} consider a network of swap stations that seeks to determine the long-term number of charging bays and batteries to locate at each station and the short-term number of batteries to recharge over time. Schneider et al. \cite{Schneider18} do consider charging \emph{capacity}; however, their use of capacity indicates the number of batteries that can be recharged at one time in the station and do not model \emph{battery capacity}. Kang et al. \cite{Kang16} propose the EV charging management problem, which determines the optimal locations for a network of swap stations and further determines the charging policy for each location. Their definition of charging policy only considers charging and excludes discharging or replacement. 
Excluding the explicit charging actions over time, Zhang et al. \cite{Zhang14} determine the number of batteries that are necessary for swapping over time.  For further studies in the area of EV operations management, we refer the reader to a review by Shen et al. \cite{Shen19}.

A common limitation of the aforementioned research is that it fails to account for battery degradation. To the best of our knowledge, there are very few articles \textcolor{black}{that} consider battery degradation. Asadi and Nurre Pinkley \cite{Asadi19} are the first to introduce stochastic SAIRPs for managing battery swap stations with degradation. However, they do not theoretically analyze this problem class, \textcolor{black}{do not introduce intelligent approximate dynamic solution methods that exploit the theoretical results, and do not provide insights from solving realistic-sized SAIRPs.} 

Others have examined battery degradation in a deterministic setting without any uncertainty \citep{Kwizera2018, Park17, Tan18}.
Sarker et al. \cite{Sarker15} consider the problem of determining the next day operation plan for a battery swap station under uncertainty.  They do consider battery degradation; however, they solely penalize battery degradation with a cost in the objective and do not link it to a reduction in operational capabilities.

Others have examined battery swap stations from different perspectives.  Researchers have examined how to find the optimal number and location of swap stations in a system \citep{Shavarani2018, Kim13, HONG18}.  Extending this idea further, Yang and Sun \cite{Yang2015} look to locate swap stations and route vehicles through the swap stations.  Others have examined how to locate and/or operate swap stations \textcolor{black}{that} are coordinated with green power resources  \citep{Pan10}, stabilize uncertainties from wind power \citep{GZW12}, or coordinate with the power grid \citep{Dai2014}.

\textcolor{black}{ Our research is related to optimal timing and reliability problems. There is a rich literature on finding the optimal timing of decisions to maximize systems' lifespan and reliability. For instance, researchers maximize the expected quality-adjusted life years by finding the optimal timing of living-donor liver transplantation \citep{Maillart04}, biopsy test \citep{Chhatwal10, Zhang12}, and replacement of an Implantable Cardioverter Defibrillator generator \citep{Khojandi14}. There are two options for the actions in these works (e.g., transplant/wait, take/skip the biopsy test, replace/not replace). However, our action determines the \emph{number} of batteries to recharge/discharge and replace in each epoch because it is not a single battery that enables the station to operate. Instead, it a set of batteries that enables operation, which creates a significantly larger action space that is dependent on the number of batteries at the station. Similar to our work is that of \citep{bloch01}, which determines the optimal timing and duration of a degrading repairable system. There is extensive research in the nexus of optimization, reliability, and systems maintenance. We refer the reader to the recent review paper by \cite{JONGE18} for further study.} 

\textcolor{black}{Our research can be placed under the umbrella of inventory management and equipment replacement problems} with stochastic elements. There is a large research base examining these types of problems under different characteristics. We proceed by reviewing a small sample of this body of knowledge by focusing on foundational work and research most similar to the scope of this paper. 
Researchers have  extensively studied 
inventory problems, including those with stochastic demand \citep{StochasticInventory}, two- and multi-echelon supply chains \citep{Clark1960, Clark1972}, and multiple products \citep{DeCroix1998}. A desirable feature of the solutions to inventory problems is that the optimal policy has a simple structure. A classic example of such an optimal policy is the $(s,S)$ policy that indicates to order up to $S$ units when the inventory level drops below $s$ \citep{Scarf}. Others have examined more sophisticated inventory problems which include scheduling production \citep{vanderLaan1997,ElHafsi2009,Maity2011,Golari2017}, performing maintenance or replacement \citep{VanHorenbeek2013}, and ordering spare parts for maintenance \citep{Elwany2008, Rausch2010}. Additionally, researchers have examined perishable inventory that degrades over time \citep{Nahmias1982} or inventory that can be recycled or remanufactured in a closed-loop supply chain \citep{Toktay2000, Zhou2011, Govindan2015}. 

The proposed work is distinct from this previous literature as it links the actions of recharging batteries to the actions that must be taken for replacing battery capacity.  No prior work includes the counter-intuitive property that the act of maintaining the system in the short term (e.g., through recharging batteries which can be analogous to short-term maintenance or short-term inventory replenishment) is harmful for long\textcolor{black}{-}term performance (e.g., future need to replace equipment or replenish other types of inventory).

A novel component of our work is the consideration of battery degradation within the decision-making process. Battery degradation is most traditionally measured based on calendar life or cycles, where a cycle consists of one use and one recharge \citep{Lacey13, Ribbernick15}.  Using physical experiments, simulation, and mathematical modeling, researchers aim to capture the rate of battery degradation for different batteries and conditions such as temperature and depth of discharge \citep{Plett11, Abe12, Ribbernick15, Hussein15, Dubbary11}. We approximate battery degradation using a linear degradation factor derived from the work of \cite{Lacey13} and \cite{Ribbernick15}, as is consistent with other research using a linear forecast \citep{Xu2018, Abdollahi15, wood11}.

Our MDP model suffers from the curses of dimensionality due to \textcolor{black}{the} very large \textcolor{black}{size of all MDP elements together, including state and action spaces, transition probability, and reward}. Approximate dynamic programming (ADP) is a method that has had great success in determining near-optimal policies for large-scale MDPs \citep{Powell}. Researchers have used ADP methods to solve problems in energy, healthcare, transportation, resource allocation, and inventory management \citep{Bertmis02, Powell05, Simao09, Erdelyi10, Maxwell10, Cimen15, Meissnera18, Cimen17, Nasrollahzadeh18}. Jiang and Powell \cite{Jiang15} propose a monotone ADP algorithm that is specifically designed for problems with monotone value functions.  \textcolor{black}{In this paper, we prove that the value function of the stochastic SAIRP has a non-decreasing monotone structure. Hence, we utilize Jiang and Powell's \cite{Jiang15} monotone ADP algorithm and enhance it by adding a regression-based initialization.}

\section{Problem Statement}\label{ProblemStatement}

In this section, we present \textcolor{black}{and model} the Markov Decision Process (MDP) model of the scheduling, allocation, and inventory replenishment problem (SAIRP) \textcolor{black}{ that} considers stochastic demand for swaps over time, non-stationary costs for recharging depleted batteries, non-stationary revenue from discharging, and capacity-dependent swap revenue. The MDP model captures the dynamic average battery capacity over time, the associated replacement \textcolor{black}{policies}, and the interaction between battery charge and battery capacity at a battery swap station. We note, this model was originally presented in Asadi and Nurre Pinkley \cite{Asadi19}; however, we believe it is necessary to provide the reader with the formal problem definition to enable understanding of the main theoretical and algorithmic contributions that follow. \textcolor{black}{We use a finite horizon MDP to capture} the high variability of data over time, including the mean demand for battery swaps, the price for recharging batteries, and the revenue earned from discharging batteries back to the power grid. \textcolor{black}{The uncertainty in the system is the stochastic demand for battery swaps (i.e., exchange of a depleted battery for a fully-charged battery). We model this uncertainty (stochastic demand) using the random variable, $D_t$, for each time period $t$. These random variables are explicitly used to calculate the transition probabilities.} The objective is to maximize the expected \textcolor{black}{total reward} of the swap station and determine optimal policies which dictate how many batteries to recharge, discharge, and replace over time. \textcolor{black}{For our model, the expected total reward equals the expected total profit calculated as the revenue from satisfying demand and discharging batteries to the power grid minus the costs from recharging and replacing batteries.}

We formulate our MDP model with the following elements. We define $T$ as the finite set of decision epochs, which are the discrete periods in time in which decisions are made. By defining $N$ as the terminal epoch, $T=\{1,\ldots, N-1\}, \; N<\infty$.

We denote the two-dimensional state of the system at time $t$, $s_t=(s^1_t,s^2_t) \in S= (S^1 \times S^2)$, as the total number of fully charged batteries, $s^1_t \in S^1$, and the average capacity of all batteries, $s^2_t \in S^2$, at the swap station.  In the design of $s_t^1$, we only consider that batteries are either fully charged or depleted.  The number of full batteries at time $t$, $s^1_t$, is a\textcolor{black}{n integral} value between 0 and $M$, where $M$ is the total number of batteries in the station, thus, $S^1 = \{0, 1, 2, \ldots, M\}$.  

We \textcolor{black}{use an aggregated MDP in which we} track the discretized average battery capacity rather than \textcolor{black}{a disaggregated MDP, which tracks} each battery capacity individually\textcolor{black}{,} to reduce the curses of dimensionality from the second dimension of the state. \textcolor{black}{The disaggregated MDP severely suffers from the curse of dimensionality as the state space's size grows exponentially as the number of batteries increases.} We discretize the average battery capacity where $S^2=\{ 0, \theta, \theta+\varepsilon, \theta+2\varepsilon, \ldots, 1\}$, in which $\theta$ equals the lowest acceptable average battery capacity and $\varepsilon$ in the discretized capacity increment. State zero in $S^2$ is an absorbing state representing that the average battery capacity dropped below $\theta$.  To discourage the station from allowing the battery capacity to drop below $\theta$ thereby resulting in lower quality batteries at the station, we disallow charging, discharging, swapping, and replacement when in this absorbing state. \textcolor{black}{Hence, the set of feasible actions when in an absorbing state, $s^2_t < \theta$ or $s^2_t =0$, only includes no recharge/discharge and no replacement.}  \textcolor{black}{We note, with this aggregated modeling proposed by Asadi and Nurre Pinkley \cite{Asadi19}, the problem size and complexity are reduced, which is not always necessary when using approximate solution methods. However, the aggregated model allows us to benchmark the performance of new and existing approximate solution methods and analyze larger SAIRP instances. Further, we previously showed that the results do not significantly change with aggregation \citep{Asadi19}.}

We denote the two-dimensional action to represent the number of batteries to recharge/discharge, $a^1_t $, and the number of batteries to replace, $a^2_t$, at time $t$. \textcolor{black}{In our aggregated MDP model, there is no known difference between the capacity of batteries as we only track the average capacity of all batteries. In reality, swap stations, applying the aggregated MDP, may track/not track the capacity of each battery. If, consistent with the model, the swap station does not track individual battery capacity values, we assume the specific batteries that are selected to be recharged/replaced or discharged/swapped are \emph{arbitrarily} selected from the set of empty and fully-charged batteries, respectively. However, if the swap station does track the individual battery capacity value, we assume that the station selects to recharge/discharge and swap batteries with the highest capacity values and selects to replace batteries with the lowest capacity values. With this selection mechanism, individual battery capacity values will be closer to the average battery capacity of the system and, thus, further emphasizes the aggregated modeling decision.} Regarding the first dimension of the action, \textcolor{black}{$a^1_t$,} we attribute a positive value to the number of batteries to recharge, \textcolor{black}{$a_t^{1+}$}, and a negative value to the number of batteries to discharge,  \textcolor{black}{$a_t^{1-}$}. To clarify the distinction between recharging and discharging actions, we define positive recharging, $a_t^{1+}$, and discharging actions, $a_t^{1-}$, with Equations \eqref{eq:1} and \eqref{eq:2}. \textcolor{black}{We note that only dealing with the positive number of batteries that are recharged or discharged using Equations \eqref{eq:1} and \eqref{eq:2} is helpful to clarify the forthcoming state transition, probability transitions, and reward calculations.} 

\begin {equation}\label{eq:1}
a^{1+}_t=
\left\{
	\begin{array}{ll}
		a^{1}_t  & \text{ if} \ \ \ a^{1}_t \ge 0, \\
		0 & \text{ otherwise},
	\end{array}
\right.
\end {equation}

\begin {equation}\label{eq:2}
a^{1-}_t=
\left\{
	\begin{array}{ll}
		|a^{1}_t|  &   \text{ if} \ \ \ a^{1}_t < 0, \\
		0 & \text{ otherwise}.
	\end{array}
\right.
\end {equation}

\textcolor{black}{The action $a_t^1$ represents both the number of batteries that are recharged, when $a_t^1$ is positive, and the number of batteries that are discharged, when $a_t^1$ is negative.  We designed the action in this way as it is not beneficial to recharge and discharge at the same epoch, as they will cancel each other out and cause the capacity to degrade.  Thus, we select one value for $a_t^1$ for each time $t$, and state, $s_t$. Depending on whether the selected action is positive or negative indicates whether recharging or discharging will occur.} 
  We denote the number of plug-ins in the station as $\Phi$.  We assume all plug-ins are capable of supplying energy from the grid to recharge batteries and receiving energy from batteries discharged using Battery to Grid \citep{Dunn2011}.  \textcolor{black}{We define the first dimension of action as $a^1_t \in A^1_t=\{\max(-s^1_t, -\Phi), \ldots, 0, \ldots, \min(M-s^1_t-a^2_t, \Phi)\}$, which limits the number of discharged batteries by the minimum of the number of plug-ins and the number of full batteries ($-$min$(s^1_t, \Phi)$ = max$(-s^1_t, -\Phi)$) and limits the number of recharged batteries by the minimum of the number of plug-ins and the number of depleted batteries that were not replaced. }  In the second dimension of the action space, $a^2_t \in A^2_{t}=\{0,\ldots, M-s^1_t\}$, we only allow depleted batteries to be replaced \textcolor{black}{at} each epoch $t$ which arrive in epoch $t+1$ with full charge and capacity. We define $A_{s_t} = (A^1_{s_t} \times A^2_{s_t}) \subseteq (A^1_t \times A^2_t)$ as the set of feasible actions for the state $s_t$ at time $t$. \textcolor{black} {In our model, the set of feasible actions when in an absorbing state, $s^2_t < \theta$ or $s^2_t = 0$, only includes no recharge/discharge and no replacement; i.e.,  $A_{(s^1_t, 0)} = \{(0, 0)\}$.}

In Figure (\ref{fig:eventTiming}), we display the timing of the operations at the swap station including recharging, discharging, replacing, and swapping between epochs $t$ and $t+1$.  We assume that \textcolor{black}{the time between two consecutive epochs is sufficient} to recharge or discharge a battery completely. \textcolor{black}{In our model, we could preemptively recharge, discharge or replace batteries for future time periods.} Therefore, depleted (full) batteries 
selected for recharging (discharging) in epoch $t$ are fully charged (depleted) at the start of epoch $t+1$. 
When \textcolor{black}{stochastic} demand for a battery swap arrives in epoch $t$\textcolor{black}{,} we can swap up to the number of fully-charged batteries in our inventory which equals the number of fully-charged batteries at the start of $t$ minus the number of discharged batteries. \textcolor{black}{We subtract the fully-charged batteries assigned to be discharged as they are unavailable for swapping until the next decision epoch.}

\begin{figure}[!h]
\begin{center}
\begin{tikzpicture}[shorten >=1pt,draw=black!50, node distance=\layersep, scale=0.5]
\draw (0,0) -- (30,0);
\draw (1,3) -- (1,-3);
\node [below, label={[align=left, font=\footnotesize]  Start of\\   epoch $t$}] at (1,3) {};
\node [below, label={[align=center, font=\footnotesize]  \# Fully charged \\ and average capacity \\ $(s^1_t, s^2_t)$}] at (1,-6) {};
\draw[->] (4,0) -- (4,4);
\node [below, label={[align=center, font=\footnotesize]  Replace $a^2_t$ \\ batteries }] at (4,4) {};
\draw[->] (8,0) -- (8,4);
\node [below, label={[align=center, font=\footnotesize]  Recharge $a^{1+}_t$\\  batteries}] at (8,4) {};
\draw[->] (8,0) -- (8,-4);
\node [below, label={[align=center, font=\footnotesize]  Discharge $a^{1-}_t$\\  batteries}] at (8,-6) {};
\draw [decorate,decoration={brace,amplitude=10pt,mirror,raise=4pt},yshift=0pt] (8.5,0) -- (21.5,0) node [black,midway,xshift=0.00cm, yshift=-0.8cm] {};
\node [below, label={[align=center, font=\footnotesize, color=blue] Demand $D_t$  occurs\\ in epoch $t$, satisfied\\ if fully charged \\ batteries available}] at (15,-5.5) {};
\draw [line width=0.1cm, color=blue] (8.5,0) -- (21.5,0);
\draw[->] (22,4) -- (22,0);
\node [below, label={[align=center, font=\footnotesize]   Recharged $a^{1+}_t$ and  \\ discharged $a^{1-}_t$ \\ batteries complete }] at (22,4) {};
\draw[->] (22,-4) -- (22,0);
\node [below, label={[align=center, font=\footnotesize]  New replaced $a^{2}_t$\\ batteries arrive\\ }] at (22,-7.2) {};
\draw (28,3) -- (28,-3);
\node [below, label={[align=center, font=\footnotesize]  Start of\\  epoch $t+1$}] at (28,3) {};
\node [below, label={[align=center, font=\footnotesize] \# Fully charged and \\ average capacity \\ $(s^1_{t+1}, s^2_{t+1})$}] at (28,-6.5) {};
\end{tikzpicture}
\caption{Diagram outlining the timing of events for the SAIRP model.}
\label{fig:eventTiming}
\end{center}
\end{figure}
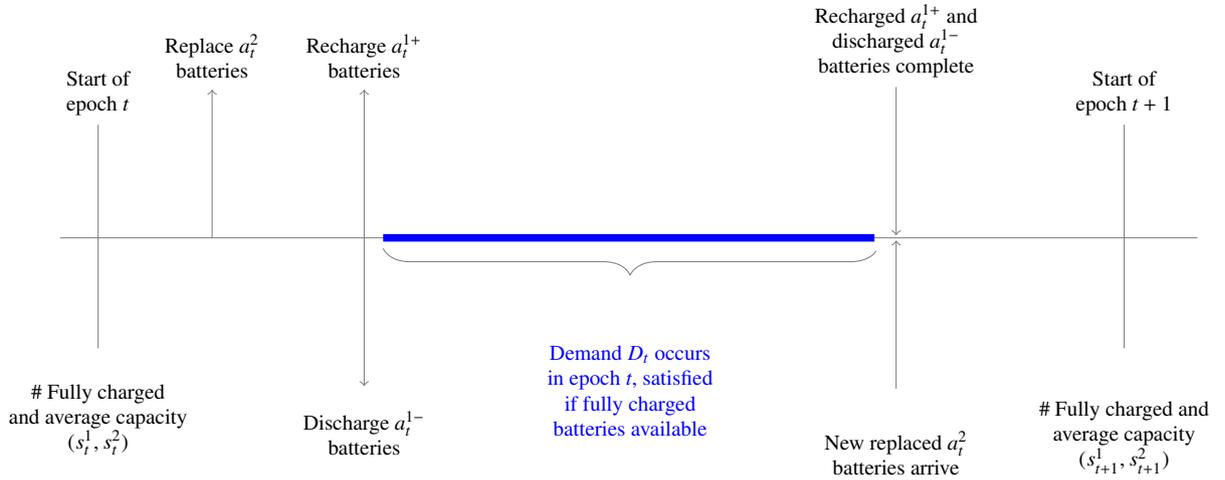

Transition probabilities indicate the likelihood of transitioning between states when considering the uncertainty of the system. In our MDP model, the uncertainty \textcolor{black}{in the system is the stochastic demand for battery swaps (i.e., exchange of a depleted battery for a fully-charged battery)} at each decision epoch $t$, $D_t$. The amount of satisfied swap demand in epoch $t$ equals $\min\{ D_t, s^1_t-a^{1-}_t\}$ wherein the second term indicates the number of full batteries that are not already discharging at $t$. We outline the state transition for the first dimension of the state in Equation \eqref{eq:3} which determines the number of full batteries in epoch $t+1$ based on the number of full, recharged, discharged, replaced, and swapped batteries in epoch $t$.
 
 \begin{equation}\label{eq:3}
s^1_{t+1}=s^1_{t} + a^2_{t}+a^{1+}_{t}- a^{1-}_{t}-\min\{ D_t, s^1_t-a^{1-}_t \}. 
 \end{equation}

The second state transitions according to Equation \eqref{eq:4}\textcolor{black}{,} which determines the future average capacity in $t+1$ based on the current average capacity and the number of full, recharged, discharged, and replaced batteries in epoch $t$.  We assume that all batteries swapped at time $t$ have a capacity equal to the average capacity of the batteries at the swap station.  We justify the assumption with the following logic.  Batteries previously swapped in epoch $t_1 < t$\textcolor{black}{,} which are in use outside of the station between $t_1$ and $t$ and need to be swapped again in epoch $t$\textcolor{black}{,} have a capacity similar to the average station capacity at $t$ when the swap station is used regularly (i.e., $t - t_1$ is small).  We define $\delta^C$ to represent the amount of battery capacity degradation from one battery cycle. We adopt the cycle-based degradation measure \citep{Abdollahi15, Lacey13} and assume that batteries do not degrade when not in use. Further, without loss of generality, we attribute the degradation from a full cycle to the recharge/discharge portion of the cycle. \textcolor{black}{We use $round()$ to represent that Equation \eqref{eq:4} returns values in the discretized state space, $S^2$, with $\varepsilon$ precision.}

 \vspace{-0.1in}
\begin{multline} 
g^2(s_t^1, s^2_t, a_t^1, a_t^2) = s^2_{t+1}=  round\bigg(\frac{(s^2_{t}- \delta^C) (a^{1+}_t +a^{1-}_t)  + a^2_t  + s^2_t(M - a^{1+}_t - a^{1-}_t - a^2_t)}{M} \bigg). \label{eq:4}
 \end{multline} 
 
\textcolor{black}{In the first term in the numerator of Equation (4), we multiply the summation of the number of recharged ($a^{1+}_t > 0 $) and discharged ($a^{1-}_t > 0$) batteries by the reduced average capacity $(s^2_{t}- \delta^C)$ due to the recharging/discharging actions.} The second term adds the $a^{2}_t$ replaced batteries with 100\% capacity. The third term maintains the same capacity for batteries not recharged, discharged, and replaced.  These terms are all averaged over the $M$ batteries in the swap station. The system enters the absorbing state $0 \in S^2$ when the average capacity is less than $\theta$.  To discourage entrance into this absorbing state, no recharging, discharging, swapping, or replacement is allowed. This setting ensures that swap stations should take appropriate actions \emph{before} allowing the average capacity to drop below $\theta$.  Thus, the transition of the second dimension of the state is precisely defined with Equation \eqref{eq:4a}.

\begin {equation}\label{eq:4a}
f^2(s_t^1, s^2_t, a_t^1, a_t^2) = s_{t+1}^2 =
\left\{
	\begin{array}{ll}
		g^2(s_t^1, s^2_t, a_t^1, a_t^2) &   \text{ if} \ \ \ g^2(s_t^1, s^2_t, a_t^1, a_t^2) \geq \theta, \\
		0 & \text{ otherwise.}
	\end{array}
\right.
\end {equation}

In Equation \eqref{eq:TrP}, we define the probability of transitioning from state $s_t = (s_t^1, s_t^2)$ in epoch $t$ to the state $j = (j^1, j^2)$ in epoch $t+1$ when action $a_t = (a_t^1, a_t^2)$ is taken.

\begin{footnotesize}
\begin{equation}\label{eq:TrP}
p(j^1, j^2 \mid s^1_t, s^2_t,  a^{1+}_t, a^{1-}_t, a^2_t)=
\begin{cases}
p_{s^1_t + a^2_t + a^{1+}_t- a^{1-}_{t}-j^1} & \mbox{if } a^2_t + a^{1+}_t < j^1 \leq s^1_t + a^2_t + a^{1+}_t -a^{1-}_t \mbox{ and } \\
&  j^2 = f^2(s_t^1, s^2_t, a_t^1, a_t^2),  \\
q_{s^1_t + a^2_t + a^{1+}_t- a^{1-}_{t}-j^1} & \mbox{if } j^1=a^2_t + a^{1+}_t \mbox{ and } j^2 = f^2(s_t^1, s_t^2, a_t^1, a_t^2),  \\
0 & \mbox{otherwise.} 
\end{cases}
\end{equation}
\end{footnotesize}

 We define $p_j = P(D_t = j)$ and $q_u = \sum_{j=u}^{\infty} p_j = P (D_t \ge u)$. Each probability in Equation \eqref{eq:TrP} depends on the number of batteries swapped, i.e., $s^1_t + a^2_t + a^{1+}_t- a^{1-}_{t}-j^1$ (see Equation \eqref{eq:3}).  When no batteries are swapped in epoch $t$, the station still has $s^1_t + a^2_t + a^{1+}_t- a^{1-}_{t}$ fully charged batteries \textcolor{black}{at} epoch $t+1$.  Instead, if all available fully-charged batteries in epoch $t$ are swapped, the station will \textcolor{black}{have} $a^2_t + a^{1+}_t$ fully charged batteries \textcolor{black}{at epoch $t+1$,} which are the result of the $a^2_t$ replaced and $a^{1+}_t$ recharged batteries in epoch $t$.

In Equation \eqref{eq:TrP}, the probability of transitioning to another state is non-zero only when Equation \eqref{eq:4a} is satisfied.  When the transition probability is $P (D_t = s^1_t + a^2_t + a^{1+}_t- a^{1-}_{t}-j^1)$, the demand for swaps is less than or equal to the number of full batteries available for swapping (as in condition 1 of Equation \eqref{eq:TrP}).  Alternatively, when the demand for swaps is greater than the number of available full batteries, the state transitions according to the cumulative probability $P (D_t \ge s^1_t + a^2_t + a^{1+}_t- a^{1-}_{t}-j^1)$.  If Equation \eqref{eq:4a} is not satisfied, $j^1$ is lower than the total number of batteries recharged and replaced ($a^2_t + a^{1+}_t$), or $j^1$ exceeds than the maximum number of fully charged batteries, $s^1_t + a^2_t + a^{1+}_t -a^{1-}_t$, the probability of transition is zero.

To clarify the transition probability function, we illustrate using an example. Consider the case when \textcolor{black}{at} epoch $t$, the swap station has 80 full-batteries and 20 depleted batteries in inventory (i.e., $M = 100, s^1_t = 80$), the average battery capacity equals 0.85, and we take the action to recharge 10 batteries (i.e., $a^1_t = a^{1+}_t = 10$) and replace 5 batteries (i.e., $a^2_t = 5$).  For this example, we assume recharging or discharging for one time period results in a capacity degradation equal to 0.01 (i.e., $\delta^C = 0.01$)  and the discretized capacity increment is also 0.01 (i.e., $\varepsilon = 0.01$). If there is no demand for battery swaps (i.e., $D_t =0$), \textcolor{black}{at epoch} $t+1$ the station will have 95 full batteries with a discretized average capacity equal to 0.86.  Thus, the probability of transitioning to a state with more than 95 full batteries or an average capacity not equal to 0.86 is zero.  Contrarily, if the demand for swaps is 80 or more (i.e., $D_t \geq 80$), then all full batteries in inventory will be swapped and the number of full batteries at \textcolor{black}{at epoch} $t+1$ equals $a^2_t + a^{1+}_t = 10+5 =15$. Thus, the probability of transitioning to a state with less than 15 full batteries is zero.  Further, the probability of transitioning to a state with exactly 15 full batteries and average capacity equal to 0.86 indicates that demand for swaps met or exceeded $s^1_t - a^{1-}_{t} = 80$.  Lastly, consider the case that we transition to a state with 30 full batteries and average capacity equal to 0.86.  The 30 full batteries is between the minimum, $a^2_t + a^{1+}_t = 15$, and maximum, $s^1_t + a^2_t + a^{1+}_t- a^{1-}_{t} = 95$ number of full batteries; thus, we know that $a^2_t + a^{1+}_t = 15$ batteries arrive at the end of $t$ indicating that we swapped  $s^1_t + a^2_t + a^{1+}_t- a^{1-}_{t} - j^1 = 80 + 5 + 10 - 0 - 30 = 65$ batteries \textcolor{black}{at} epoch $t$.  As follows, the probability of transitioning to this state equals the probably that demand for swaps equals 65, i.e., $P(D_t = 65)$.

The actions taken seek to maximize the expected total reward.  The expected total reward depends on the immediate reward earned \textcolor{black}{at} each epoch.  Specifically, the immediate reward is the profit earned. \textcolor{black}{In our setting, swap stations earn revenue from swapping and/or discharging fully-charged batteries and incur costs to recharge and/or replace depleted batteries.} We calculate the immediate reward \textcolor{black}{at} epoch $t$ according to the state of the system $s_t = (s^1_t, s^2_t)$, the taken action $a_t= (a^1_t, a^2_t)$, and the future state $s_{t+1} = (s^1_{t+1}, s^2_{t+1})$. Specifically, the immediate reward is calculated according to Equation \eqref{eq:imRew},

\begin{equation}\label{eq:imRew}
r_t (s_t, a_t, s_{t+1})= \rho_{s_t^2} (s^1_{t}+a^2_{t}+ a^{1+}_{t} - a^{1-}_{t}-s^1_{t+1}) - K_t a^{1+}_t + J_t a^{1-}_t - L_t a^2_t,
\end{equation}

\noindent where $s^1_{t} + a^2_{t}+a^{1+}_{t}- a^{1-}_{t}-s^1_{t+1}$ equals the number of batteries swapped and the time-dependent recharging cost, discharging revenue, and replacement cost are defined as  $K_t$, $J_t$, and $L_t$, respectively.  \textcolor{black}{We note that SAIRPs consider two aspects of a battery, charge and capacity. In this model, the fully-charged/empty batteries are not necessarily full-capacity as they might already be degraded due to the previous recharge/discharge actions. Thus, the average capacity of batteries can take a value less than 100\%.} We assume the realized swap revenue depends on the current average capacity.  Thus, we define $\rho_{s_t^2}$ to be the capacity-dependent revenue per battery swapped in Equation \eqref{eq:rho}.

\begin{equation} \label{eq:rho}
\rho_{s_t^2} = \beta \left(1+\frac{s^2_t - \theta}{1-\theta}\right) =\frac{\beta(1+s^2_t - 2 \theta)}{1-\theta}.
\end{equation}

We set $\beta \ge \text{max}_{t\in T} \ J_t$ to ensure the swap station is profitable with each battery swapped (i.e., the swap revenue is no less than the maximum recharging cost). 
\textcolor{black}{We use the average capacity of batteries as the indicator of the quality of batteries in the station when developing the revenue per swap function. Revenue per battery swap is a linear function of the average capacity of batteries in the station. This setting ensures that the stations can gain higher revenue when the average capacity is higher. It also provides an incentive for swap stations to replace batteries for higher revenue and benefits customers by receiving higher quality batteries.} In the design of Equation \eqref{eq:rho}, when the average capacity is at the lowest operational value ($s_t^2 = \theta$), the revenue per swap $\rho_{s_t^2}$ equals $\beta$, which is at least equal to the maximum price paid for recharging batteries.  When the swap station has an average battery capacity equal to 1, $s^2_t = 1$, then $\rho_{s_t^2} = 2 \beta$ which equates to a higher revenue earned due to higher customer satisfaction from swapping a higher quality battery.  Hence, in our design, the revenue per swap has a value between $[\beta,2\beta]$ depending on the average capacity of batteries in the station at time $t$.  We calculate the terminal reward in Equation \eqref{eq:TerminalRew} as the potential revenue from swapping all remaining fully charged batteries provided the average battery capacity is at least $\theta$.

\begin{equation}
r_N (s_N)=
\left\{
	\begin{array}{ll}
		\rho_{s_N^2} s^1_N  & \text{ if} \ \ \ s^2_N \geq \theta, \\
		0 & \text{ otherwise}.
	\end{array}
\right.
\label{eq:TerminalRew}
\end{equation}

Using the probability transition function and the immediate reward, we define the immediate expected reward in Equation \eqref{eq:8a}.
\begin{equation}\label{eq:8a}
r_t (s_t, a_t)= \sum_{s_{t+1} \in S}  \left[ p_t(s_{t+1} \mid s_t, a_t)(\rho_{s_t^2} (s^1_{t}+a^2_{t}+ a^{1+}_{t} - a^{1-}_{t}-s^1_{t+1})) \right] - K_t a^{1+}_t + J_t a^{1-}_t - L_t a^2_t.
\end{equation}

We define the decision rules, $d_t(s_t): s_t \rightarrow A_{s_t}$, as a function of the current state and time. Our decision rules determine the selected action $a_t \in A_{s_t}$ when the system is in $s_t$ at decision epoch $t \in T$. In our problem setting, we use deterministic Markovian decision rules because we choose which action to take provided we know the current state \citep{Puterman05}. A policy $\pi$ consists of a sequence of decision rules ($d^\pi_1(s_1), d^\pi_2(s_2), \ldots, d^\pi_{N-1}(s_{N-1}$) for all decision epochs.  The expected total reward of policy $\pi$, denoted $\upsilon_N^\pi(s_1)$ when the system starts in state $s_1$ \textcolor{black}{at time $t$=1} is calculated according to Equation \eqref{eq:upsilon}.

\begin{equation}
\upsilon_N^{\pi}(s_1) = \mathbb{E}_{s_1}^{\pi}\left[\sum_{t=1}^{N-1}r_t(s_t,a_t)+r_N(s_N)\right].
\label{eq:upsilon}
\end{equation}

\textcolor{black}{In Section \ref{sec:Sol}, we describe our solution methodology to find optimal/near-optimal solutions to maximize the expected total reward of the stochastic SAIRPs.} 

\section{Theoretical Results}\label{TheoreticalResult}

In this section, we prove theoretical properties regarding the structure of the optimal SAIRP policy and value function.  First, we \textcolor{black}{ show that the stochastic SAIRPs violate the sufficient conditions for the optimality of a monotone policy in the second dimension of the state.}  Second, we prove the existence of a monotone optimal policy for the second dimension of the \textcolor{black}{state} in a special case of the SAIRP.  Lastly, we prove the monotonicity of the value function when considering the first, second, and both dimensions of the \textcolor{black}{state}.  In the remainder of this section, we present the main theorems and point the reader to the appendices for the formal mathematical proofs.


\subsection{Monotone Policy}\label{MonotonePolicy}

Our investigation in proving the structure of an optimal policy for the SAIRP is motivated by the desire to exploit efficient algorithms \textcolor{black}{that} require less computational effort to find optimal policies and increase the ability to solve larger problem instances \citep{Puterman05}.  Widrick et al. \cite{Widrick16} examined the problem of managing a battery swap station when only considering battery charge, or equivalently the first dimension of our MDP model.  They proved the existence of a monotone optimal policy when demand is governed by a non-increasing discrete distribution.  In our investigation of the second dimension, our intuition was that monotonicity would be preserved for the  stochastic SAIRP.  Informally, this equates to the optimal policy indicating to replace more batteries when the average capacity is lower.  However, in Lemma~\ref{Thm1}, we prove a counter-intuitive result that, in general, \textcolor{black}{the sufficient conditions for the optimality of a monotone policy in the second dimension of the state do not exist for the stochastic SAIRPs.}  Instead, we are able to prove the existence of a monotone optimal policy for the second dimension of the \textcolor{black}{state} when an upper bound is placed on the number of batteries replaced at the swap station in each decision epoch.

First, we formally define a monotone optimal policy.  A non-increasing monotone policy $\pi$ has the property that for any $s_i, s_j \in S$ with $s_i \le s_j$ \textcolor{black}{(for multi-dimensional states, please see the partial ordering definition in Definition \ref{Def2} of Appendix \ref{APP2})}, there exist decision rules $ d^{\pi}_t(s_i) \ge d^{\pi}_t(s_j)$  for each $t = 1, \dots, N-1$ \citep{Puterman05}. \textcolor{black}{The sufficient conditions for the existence of a monotone optimal policy in the second dimension of the state are as follows  \citep{Puterman05}.}

\begin{enumerate}
\item $r_t (s^2_t, a^2_t )$ is non-decreasing in $s^2_t$ for all $a^2_t \in A'$. 
\item $q_t(k \mid s^2_t , a^2_t)$ is non-decreasing in $s^2_t$ for all $k \in S^2$ and $a \in A'$. 
\item $r_t (s^2_t, a^2_t )$ is a subadditive function on $S^2 \times A'$. 
\item $q_t(k \mid s^2_t , a^2_t)$  is subadditive on $S^2 \times A'$ for every $k \in S^2$. 
\item $r_N (s_N)$ is non-decreasing in $s^2_N$.
\end{enumerate}

\noindent Where $A'$ includes all possible actions for the second dimension of the action space. Specifically, \mbox{$A' = \{\cup_{s_t \in S} A^2_{s_t}\}$}. \textcolor{black}{We note that $q_t(k \mid s, a) = \sum_{j=k}^{\infty} p_t(j \mid s, a)$, which is the sum of the probabilities from $k$ to $\infty$, in general. For the second dimension, we have $q_t(k \mid s^2_t, a^2_t) = \sum_{j^2=k}^{\infty} p_t(j^2 \mid s^2_t, a^2_t)$.}

\textcolor{black}{In Lemma~\ref{Thm1}, w}e prove that one of the aforementioned conditions is not satisfied for stochastic SAIRPs.  
\textcolor{black}{In Theorem~\ref{Thm2}, w}e are able to prove that a monotone optimal policy in the second dimension of the \textcolor{black}{state} does exist when there is an upper bound on the number of batteries replaced in each decision epoch.
We refer the reader to Appendix \ref{APP1} for full details of the proof of Lemma~\ref{Thm1} and Theorem~\ref{Thm2}.

\begin{lemma}\label{Thm1} \textcolor{black}{The stochastic SAIRPs violate the sufficient conditions for the optimality of a monotone policy in the second dimension of the state. }
\end{lemma}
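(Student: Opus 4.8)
To prove this it suffices to show that \emph{at least one} of the five listed conditions fails, so the plan is to target the structurally most fragile one, namely condition~4 (subadditivity of the complementary CDF $q_t(k\mid s^2_t,a^2_t)$ on $S^2\times A'$), and to exhibit an explicit counterexample. Conditions~1, 2, and 5 are monotonicity statements that the capacity dynamics plausibly respect, so the subadditivity conditions are the natural place to look for a break, and condition~4 is cleaner to analyze than the reward subadditivity in condition~3.

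The key observation driving the construction is that the second-dimension transition is \emph{deterministic} given $(s^2_t,a^2_t)$: by Equations~\eqref{eq:4}--\eqref{eq:4a}, once $a^1_t$ is fixed (I will take $a^{1+}_t=a^{1-}_t=0$ so the $\delta^C$ term vanishes), the demand randomness $D_t$ affects only the first dimension, and $s^2_{t+1}=f^2(\cdot)$ is a single point. Hence the marginal complementary CDF collapses to a step function, $q_t(k\mid s^2_t,a^2_t)=\mathbf{1}[\,f^2(s^1_t,s^2_t,0,a^2_t)\ge k\,]$. Writing $f^2(s^2,a^2)=s^2+a^2(1-s^2)/M$ (before rounding), one checks that $f^2$ is strictly increasing in each argument and is itself subadditive, since $f^2(s^{2+},a^{2+})+f^2(s^{2-},a^{2-})-f^2(s^{2+},a^{2-})-f^2(s^{2-},a^{2+})=(a^{2+}-a^{2-})(s^{2-}-s^{2+})/M\le 0$. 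The point of the proof is that composing a subadditive map with a threshold indicator destroys subadditivity.

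Concretely, I would fix $M$, $\varepsilon$, and $\theta$ and choose $s^{2-}<s^{2+}$, $a^{2-}<a^{2+}$ together with a grid threshold $k\in S^2$ so that the corner $(s^{2+},a^{2+})$ clears $k$ while the other three corners fall strictly below it, i.e. $f^2(s^{2+},a^{2+})\ge k>\max\{f^2(s^{2+},a^{2-}),\,f^2(s^{2-},a^{2+}),\,f^2(s^{2-},a^{2-})\}$. Such a gap always exists because $f^2(s^{2+},a^{2+})-f^2(s^{2+},a^{2-})=(a^{2+}-a^{2-})(1-s^{2+})/M>0$ and $f^2(s^{2+},a^{2+})-f^2(s^{2-},a^{2+})=(s^{2+}-s^{2-})(1-a^{2+}/M)>0$, so the maximal corner strictly dominates both mixed corners; choosing magnitudes so each gap is at least one grid step $\varepsilon$ lets a grid point $k$ sit in between. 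The subadditivity inequality then reads $q_t(k\mid s^{2+},a^{2+})+q_t(k\mid s^{2-},a^{2-})=1+0\le 0+0=q_t(k\mid s^{2+},a^{2-})+q_t(k\mid s^{2-},a^{2+})$, a contradiction, so condition~4 fails. A clean instance is $M=100$, $\varepsilon=0.01$, $\theta=0.5$, $s^{2-}=0.50$, $s^{2+}=0.60$, $a^{2-}=0$, $a^{2+}=10$, $k=0.61$, giving next-capacities $0.64,0.60,0.55,0.50$.

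The main obstacle is bookkeeping rather than mathematics: I must confirm the chosen instance is fully model-consistent — that $a^{2+}$ is feasible ($a^{2+}\le M-s^1_t$ with enough depleted batteries), that all four next-capacities and the threshold $k$ lie on the $\varepsilon$-grid after $round(\cdot)$ so no boundary rounding masks the violation, and that every relevant capacity stays at or above the absorbing threshold $\theta$ so that Equation~\eqref{eq:4a} does not collapse a value to $0$. Keeping $a^1_t=0$ removes the $\delta^C$ coupling and keeps the argument entirely within the second dimension, which is exactly the coordinate the lemma concerns.
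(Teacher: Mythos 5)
Your proposal is correct and follows essentially the same route as the paper: the paper likewise targets condition~4, uses the fact that the second-dimension transition is deterministic so that $q_t(k\mid s^2_t,a^2_t)$ collapses to a $0$/$1$ indicator (its Lemma~3), establishes that the $(s^{2+},a^{2+})$ corner strictly dominates both mixed corners (its Lemma~4), and then picks $k$ in the resulting gap to obtain $1+0\nleq 0+0$. Your explicit numerical instance and the submodularity calculation for $f^2$ are nice additions but do not change the substance of the argument, and your observation that showing one failed condition suffices matches the paper's logic (it verifies the remaining conditions only because they are reused later in Theorem~1).
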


\setcounter{theorem}{0}
\begin{theorem}\label{Thm2} There exist optimal decision rules $d^*_t : S \rightarrow A_{s_t}$ for the stochastic SAIRP which are monotone non-increasing in the second dimension of the \textcolor{black}{state} for \mbox{$t= 1, \dots, N-1$} if there is an upper-bound $U$ on the number of batteries replaced \textcolor{black}{at} each decision epoch where $U=\frac{M\varepsilon}{2(1-s^2_t)}$, when $M$ is the number of batteries at the swap station and $\varepsilon$ is the discretized increment in capacity.      
\end{theorem}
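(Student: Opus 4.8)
The plan is to prove Theorem~\ref{Thm2} by directly verifying the five sufficient conditions for a non-increasing optimal policy listed above, specialized to the replacement dimension $a^2_t$, and by showing that the bound $U$ repairs precisely the one condition that Lemma~\ref{Thm1} shows can fail. First I would check that four of the five conditions hold \emph{independently} of $U$, so the whole burden falls on a single condition. Condition~(1) holds because, expanding the expected reward \eqref{eq:8a}, the revenue term is $\rho_{s^2_t}\,\mathbb{E}[\min\{D_t,\,s^1_t-a^{1-}_t\}]$: by \eqref{eq:3} the number of swaps is $\min\{D_t,\,s^1_t-a^{1-}_t\}$ and does \emph{not} depend on $a^2_t$, while $\rho_{s^2_t}$ is increasing in $s^2_t$ by \eqref{eq:rho}. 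The same observation makes the reward separable, hence modular, in $(s^2_t,a^2_t)$ (the only other $a^2_t$-term, $-L_t a^2_t$, is linear), which gives condition~(3). Condition~(5) is immediate from \eqref{eq:TerminalRew}, and condition~(2) follows because the pre-rounding map in \eqref{eq:4} has slope $1-a^2_t/M\ge 0$ in $s^2_t$, so $f^2$ in \eqref{eq:4a} and its tail indicator are non-decreasing in the state. This reduces the theorem to showing that the bound restores condition~(4), the subadditivity of $q_t(k\mid s^2_t,a^2_t)$, which is the condition I expect Lemma~\ref{Thm1} to break.

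For condition~(4) I would use that the second-dimension transition \eqref{eq:4}--\eqref{eq:4a} is \emph{deterministic}, so that $q_t(k\mid s^2_t,a^2_t)=\mathbb{1}[\,f^2 \ge k\,]$. Fixing the first-dimension action and writing the four corner values $w=f^2(s^+,a^+)$, $x=f^2(s^-,a^-)$, $y=f^2(s^+,a^-)$, $z=f^2(s^-,a^+)$ for comparable $s^+\ge s^-$ and $a^+\ge a^-$, subadditivity of the indicator at \emph{every} threshold $k$ is equivalent to the dominance pair $\min(w,x)\le\min(y,z)$ and $\max(w,x)\le\max(y,z)$. Since $f^2$ is non-decreasing in each argument, $x$ is the smallest and $w$ the largest corner, so the $\min$-inequality is automatic and the entire difficulty is the $\max$-inequality $w\le\max(y,z)$. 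The algebra of \eqref{eq:4} isolates the replacement-induced gain in average capacity as $\tfrac{a^2_t(1-s^2_t)}{M}$ (before rounding), and the defining property of the bound is exactly that $a^2_t\le U=\tfrac{M\varepsilon}{2(1-s^2_t)}$ caps this gain at $\varepsilon/2$; consequently $w-y=\tfrac{(a^+-a^-)(1-s^+)}{M}\le\tfrac{a^+(1-s^+)}{M}\le\varepsilon/2$, so the ``bad interval'' $(\max(y,z),\,w]$ that would witness a failure has length at most $\varepsilon/2$ for every comparable pair.

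The main obstacle is the interaction of this half-step cap with the $round(\cdot)$ operator. Because $w\ge y$ and $w\ge z$ always hold, the $\max$-inequality can fail only if some admissible discretized threshold $k\in S^2$ separates $\max(y,z)$ from $w$ — that is, if rounding pushes the diagonal corner a full grid step above both off-diagonal corners. I would argue this cannot happen by noting that the grid $S^2$ has spacing $\varepsilon$ while $round(\cdot)$ places its decision boundaries at the half-step points, so the bad interval (of length $\le\varepsilon/2$) is precisely calibrated not to contain such a boundary: with $a^2_t\le\lfloor U\rfloor$ the gain is strictly below $\varepsilon/2$, forcing $round(w)=round(\max(y,z))$. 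The delicate point, where I expect the real work to lie, is a careful case analysis on where the raw values $w,y,z$ fall relative to the rounding midpoints, since all four corners carry the \emph{common} degradation shift $\tfrac{\delta^C(a^{1+}_t+a^{1-}_t)}{M}$ from \eqref{eq:4} and the raw values are not grid-aligned; one must also handle the tie-breaking at an exact midpoint and the truncation at the absorbing floor $\theta$ in \eqref{eq:4a}. Once condition~(4) is secured in this way, Theorem~\ref{Thm2} follows from the standard monotone-policy result of Puterman invoked above.
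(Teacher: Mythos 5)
Your proposal follows essentially the same route as the paper's proof: four of the five sufficient conditions are inherited from the argument already given for Lemma~\ref{Thm1}, and the bound $U$ is used exactly as in the paper to cap the replacement-induced capacity gain $\tfrac{a^2_t(1-s^2_t)}{M}$ at $\varepsilon/2$ so that $j^2_A$ and $j^2_C$ coincide after rounding, which empties the offending interval $\left(\max(j^2_B,j^2_C),\, j^2_A\right]$ in the interval-by-interval subadditivity check for $q_t$. The one ``delicate point'' you flag --- that two pre-rounding values within $\varepsilon/2$ of each other can still straddle a rounding midpoint and land on different grid points --- is handled in the paper only by the bare assertion that such values ``represent the same point,'' so the case analysis you propose would, if carried out, be more careful than the published argument rather than a departure from it.
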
 

\textcolor{black} {We provide an example to explain the optimality of the monotone policy in the second dimension of the state. Consider a swap station with $M=100$ batteries, a discretized capacity increment $\varepsilon= 0.01$, and a replacement threshold $\theta = 0.8$. The monotone policy is optimal when the maximum number of batteries replaced per epoch, $U$, is between 2 and 50.  The specific value between 2 and 50 depends on the value of the average capacity.  If the average capacity $s^2_t = 0.8$, then $U=\frac{100(0.01)}{2(1-0.8)} = 2$ whereas if $s^2_t = 0.99$, then $U=\frac{100(0.01)}{2(1-0.99)} = 50$. We note that when $s^2_t$ = 1, then $U=\frac{100(0.01)}{2(1-1)} = \infty$ meaning there is no limit on the number of replaced batteries.  However, as the average capacity is already at the highest value of 1, it is not advantageous for swap stations to incur the cost for replacing a full capacity battery after which the average capacity will remain at 1.}  Although there is a restriction on the number replaced in each epoch, there are no restrictions on the consistent replacement of batteries over multiple consecutive decision epochs.

\subsection{Monotone Value Function}\label{MonotoneValuefunction}

We now investigate the structure of the value function for stochastic SAIRPs. Although proving that a value function has a monotone structure is a weaker result than proving the structure of an optimal policy, it enables the application of computationally efficient solution methods. We prove that the MDP value function for the stochastic SAIRP is monotone non-decreasing in the first, second, and both dimensions. These results directly motivate our selection of efficient approximate dynamic programming algorithms.

A value function $V(s)$ is monotone non-decreasing in state $s$, if for any $s_i, s_j \in S$ with $s_i \le s_j$ we have $V(s_i) \le V(s_j)$ for any given action in any decision epoch $t$  \citep{Papadaki07, Jiang15}.  The MDP value function for the stochastic SAIRP is given in Equation \eqref{eq:u} which is comprised of the immediate expected reward (as given by Equation \eqref{eq:imRew}) and the transition probabilities (as given by Equation \eqref{eq:TrP}).  In Theorem~\ref{Thm3}, we show that the value function is monotone in $s^2_t$. This means for any given action, in each decision epoch $t$, as the average capacity increases, the MDP value function will not decrease.

\begin{theorem}\label{Thm3} The MDP value function of the stochastic SAIRP is monotonically non-decreasing in $s^2_t$.      
\end{theorem} 

 In Theorem~\ref{Thm4}, we prove that the value function is monotone in $s^1_t$. This means for any given action in each decision epoch $t$, as the number of fully-charged batteries increases, the MDP value function will not decrease. If the demand for the MDP model is governed by a non-increasing discrete distribution, this result is implied from the result of Widrick et al. \cite{Widrick16}.  However, we strengthen the result as we \emph{do not} require a non-increasing discrete distribution in Theorem~\ref{Thm4}.

 \begin{theorem}\label{Thm4} The MDP value function of the stochastic SAIRP is monotonically non-decreasing in $s^1_t$.      
\end{theorem} 

 When considering both dimensions simultaneously, in Theorem~\ref{Thm5}, we prove that the value function is monotone in $(s^1_t, s^2_t)$.   

\begin{theorem}\label{Thm5} The MDP value function of the stochastic SAIRP is monotonically non-decreasing in $(s^1_t, s^2_t)$.      
\end{theorem}

\textcolor{black}{In a multi-dimensional setting, we need to define the concepts of partial ordering and partially non-decreasing function, which are given by Definitions \ref{Def2} and \ref{Def3} in Appendix \ref{APP2}.} 
We \textcolor{black}{also} refer the reader to Appendix \ref{APP2} for full details of the proofs of Theorems~\ref{Thm3},  \ref{Thm4}, and \ref{Thm5}.

\section{Solution Methodology}\label{sec:Sol} 
\textcolor{black}{This section presents the solution methods used to solve the stochastic Scheduling, Allocation, and Inventory Replenishment Problem (SAIRP). First, we briefly describe the dynamic programming solution methods with the backward induction (BI) approach to provide exact solutions when the problem is not large-scale. Next, we present the approximate dynamic programming methods to overcome the curses of dimensionality and yield high-quality solutions for the stochastic SAIRPs.}     

\subsection{Exact Solution Method: Dynamic Programming} \label{sec:DP}
\textcolor{black}{Backward induction (BI) is an exact solution method to find optimal policies for the Markov Decision Process (MDP) problems \citep{Puterman05}.}  Our goal is to find the optimal policy $\pi^*$ that maximizes the expected total reward \textcolor{black}{given by Equation \eqref{eq:upsilon}}. We attribute the optimal value function, $V_t^*(s_t)$, to the optimal policy. We calculate the optimal value based on cumulative values of taking the best actions onward from decision epoch $t$ to $N$ when in state $s_t$ at time $t$ (see Equation \eqref{eq:u}). We use Bellman equations as presented in Equation \eqref{eq:u} to find optimal policies and corresponding optimal value functions for $t=1,\ldots,N-1$ and $s_t\in S$.
\begin{equation}
V_t(s_t)=\max_{a_t\in A_{s_t}}\left\{r_t(s_t,a_t)+\sum_{j\in S}p_t(j \mid s_t, a_t)u_{t+1}(j)\right\}.  
\label{eq:u}
\end{equation}

\textcolor{black}{The BI algorithm starts from $t=N$ and sets $V_N(s_N) = r_N(S_N)$ according to Equation \eqref{eq:TerminalRew}. Then, it finds the actions that maximize $V_t (s_t)$ for every state $s_t$ moving backward in time ($ t = N-1, \dots, 1$) using Equation \eqref{eq:Arg}. The optimal expected total reward over the time horizon is $V_1^*(s_1)$ where $s_1$ is the state of the system at the first decision epoch.    
\begin{equation}
a^*_{s_t,t} = \text{arg max}_{a_t\in A_{s_t}}\left\{r_t(s_t,a_t)+\sum_{j\in S}p_t(j|s_t, a_t)u_{t+1}(j)\right\}.
\label{eq:Arg}
\end{equation}}

\textcolor{black}{Now, we clarify terms used in the remainder of this paper. A \emph{sample path of demand} is the collection of a realized demand (uncertainty element) per time period generated from a given probability distribution.  A \emph{sample path of state} is comprised of the collection of consecutive visited states, one per time period.  To calculate the visited states, we need the decision rule returned by a solution method for all states or visited states over time, the \emph{sample path of demand}, and the present state.  Using this information, we use the state transition functions (Equations \eqref{eq:3} and \eqref{eq:4a}) to calculate the \emph{sample path of state}.  A \emph{sample path of policy} is the set of consecutive decision rules of the visited states of the system.  We use the term \emph{instance} to refer to an example of stochastic SAIRP, specifically when we discuss the size of stochastic SAIRPs. The term \emph{scenario} is used to refer to examples within our space-filling designed experiment that include different values for parameters. These values are generated such that to cover the designed experiment space (see Section \ref{LHS}).  
}
 
\subsection{Approximate Dynamic Programming Solution Methods} \label{sec:ADP}

In this section, we outline our monotone approximate dynamic programming algorithm with regression-based initialization (MADP-RB).  Approximate dynamic programming is a proven solution method that overcomes the curses of dimensionality \citep{Powell}.  Using the foundation of the monotone approximate dynamic programming algorithm proposed by Jiang and Powell \cite{Jiang15}, we make enhancements by exploiting our theoretical results to intelligently approximate the initial value function approximation and update the approximation with each algorithmic iteration.

The stochastic SAIRP suffers from the curses of dimensionality \textcolor{black}{considering the size of all MDP elements together.} Asadi and Nurre Pinkley \cite{Asadi19} showed \textcolor{black}{that} the size of the \textcolor{black}{the state space, the action space, the transition probability function, and the optimal policy are $O(\frac{M(1- \theta)}{\varepsilon})$,  $O(M^2)$, $O(M^4N\frac{1- \theta}{\varepsilon})$, and $O(MN\frac{1- \theta}{\varepsilon})$, respectively,} where $M$, $N$, $\varepsilon$, and $\theta$ are the number of batteries, the time horizon, the capacity increment, and the replacement threshold, respectively.  \textcolor{black}{For instance, the size of the transition probability function is $O(10^{15})$ for a realistic-sized problem with $M=100$ batteries, planning over a one month time horizon in one hour increments $N = 744$ with discretized battery capacity in increments of $\varepsilon = 0.001$.} Due to these large sizes, standard MDP solution methods, such as backward induction (BI), were ineffective in solving realistic-sized instances of the stochastic SAIRP \citep{Asadi19}.   Although there are many different ADP algorithms and approaches \citep{Powell}, there is no standard method to link the best algorithm to solve any particular problem.  However, using the problem structure is good practice when developing efficient and effective algorithms.  As we proved in Theorems~\ref{Thm3}, \ref{Thm4}, and \ref{Thm5}, the value function is monotonically non-decreasing in both dimensions, $s^1_t$ and $s^2_t$. Hence, it is reasonable to utilize and enhance the monotone approximate dynamic programming algorithm proposed by Jiang and Powell \cite{Jiang15}.  This algorithm has already shown promising performance for several application areas \citep{Jiang15}.   We proceed by outlining the core steps of \citep{Jiang15}'s monotone approximate dynamic programming (MADP) algorithm while highlighting our additions and changes to create the monotone approximate dynamic programming algorithm with regression-based initialization (MADP-RB).  To aid with the explanation, in Algorithm \eqref{alg:Amin}, we outline the MADP and underline the enhancements for our MADP-RB.  First, we introduce the notation necessary for the ADP algorithms in Table (\ref{tab:Notation}).

\begin{table}[h]
\ra{1.5}
\caption{Notation used in the ADP algorithms}
\footnotesize
\centering
\scalebox{1}{
\begin{tabular}{@{}ll@{}}
Notation &					Description \\ \hline
$maxIteration \textcolor{black}{+ 1}$ & 			The maximum number of regression-based initialization iterations\\  
$\overline{M}$ & 			The starting number of batteries used for the small SAIRPs solved using BI \\
$\overline{T}$ & 			The time horizon in the small SAIRP \\
$u^{iter}_t(s_t)$ & 			The optimal value of being in state $s_t$ at iteration $iter$ and time $t$\\
$\tau$ & 					The maximum number of core ADP iterations\\
${V}^n_t (s_t)$ & 			The optimal value of being in state $s_t$ at time $t$ for iteration $n$ \\
$\overline{V}^n_t (s_t)$ & 		The approximate value of being in state $s_t$ at time $t$ for iteration $n$ \\
$\hat{\upsilon}_t^n(s_t^n)$ & 	The observed value of state $s^n_t$ at time $t$ for iteration $n$ \\
$z_t^n(s_t^n)$ & 			The smoothed value of being in state $s_t$ at time $t$ for iteration $n$ \\ \hline
\end{tabular}}
\label{tab:Notation}
\end{table}

\subsubsection{Monotone ADP with Regression-Based Initialization}

In this section, we describe the core steps of the MADP algorithm proposed by Jiang and Powell \cite{Jiang15} and our enhancement using regression-based initialization in Algorithm \eqref{alg:Amin}. 
We display our enhancements in Algorithm \eqref{alg:Amin} with underlines to make it more clear for the reader. We proceed by explaining the implementation of the algorithm.

\textcolor{black}{The Monotone ADP with Regression-Based Initialization (MADP-RB) has two main steps. In the first step, we intelligently initialize the value function approximation using a linear regression function. The coefficients of the regression function are derived from feeding the optimal solutions of small SAIRPs. The second step is the core MADP algorithm that consists of updating the approximated values of visited and non-visited states over time through an iterative process. The states are visited over time at each iteration using the information of the present state, realized uncertainty, and taken action. The approximated value of the visited state is updated based on the observed value and previous approximated value. At each iteration, the monotonicity operator updates the approximated value of the non-visited states over time. We proceed by explaining each step in detail.}

The first step of the MADP algorithm is to initialize the value function approximation for all decision epochs such that the monotonicity of the value function is preserved.  Commonly, this is done by assigning a constant value, e.g., 0, to $\overline{V}^0_t\textcolor{black}{(s_t)}$ for all $s_t \in S$ and $t =1, \ldots N-1$.  However, using 0 or any constant value fails to exploit how the monotone value function changes based on state and time.  Thus, our enhancement to the MADP algorithm is to intelligently approximate the initial value function approximation by exploiting the monotonicity of the value function.  To do so, we iteratively calculate the optimal value function for small but increasing problem instances.  Then, we use linear regression to approximate the initial value function for larger problem instances.  To further explain this enhancement, we outline how these steps can be applied to the stochastic SAIRP.

In the stochastic SAIRP, as $M$ and $T$ increase, the problem suffers from the curses of dimensionality. \textcolor{black}{For instance, the transition probability is $O(10^{15})$ for a realistic-sized problem when $M=100$, $N = 744$, and $\varepsilon = 0.001$}.  Thus, we first optimally solve small instances of the stochastic SAIRP with small numbers of batteries $\overline{M} \ll M$ and time periods $\overline{T} \ll T$.  We repeat this step by slowly increasing the number of batteries by one until we reach a user defined maximum number of iterations (or until it is computationally infeasible to optimally solve small instances with BI). For each instance, we determine the optimal value function and the associated trends based on changes in the state, time, and number of batteries.  From the value functions for smaller instances of the problem, we use linear regression on \textcolor{black}{the decision epoch ($t$), the state of the system ($s_t$), and the number of batteries in the station ($M$)} to approximate the initial value function approximation for larger problem instances.   See lines 1-6 in Algorithm \eqref{alg:Amin} \textcolor{black}{that} describe this regression-based enhancement which are new and distinct from Jiang and Powell \cite{Jiang15}.  We complete the initialization phase in line 7, where we set the approximate value for the terminal epoch for all states and all core iterations $n=1, \ldots, \tau$ to the terminal reward (see Equation \eqref{eq:TerminalRew}).

\begin{algorithm}[ht]
\caption{Monotone Approximate Dynamic Program with Regression-based Initilization}
\label{alg:Amin}
\begin{algorithmic}[1]
  \footnotesize
\State Initialize $\overline{M}$ batteries and $\overline{T}$ time periods, where $\overline{M} \ll M$ and $\overline{T} \ll T$.
\State \underline{Set iteration counter \emph{iter} = 0.}
\For {\textcolor{black}{\underline{\emph{iter} $\le$ \emph{MaxIterations}}}}
	\State \underline{$u^{iter}_t(s_t) \leftarrow$ Use backward induction to find the value function for the problem with $\overline{M} + iter$} \linebreak \indent \underline{batteries and $\overline{T}$ time periods, 
	where $\overline{M} \ll M$ and $\overline{T} \ll T$.}
\EndFor
\State \underline{Initialize $\overline{V}^0_t\textcolor{black}{(s_t)}$ using linear regression on the combined $u^{iter}_t(s_t)$ for \textcolor{black}{all states} at $t =1, \ldots, N-1$.}
\State Set $\overline{V}^n_N(s) = r_N(s)$ for $s \in S$ and $n = 1, \ldots, \tau$  \State Set $n = 1$
\WHILE{$n \leq \tau$}
\State Select initial state $S_1^n$
	\For{$t = 1, \ldots, N-1$}
		\State Sample an observation of the uncertainty, $D_t$, determine optimal action $a_t^n$ and future value $\hat{\upsilon}_t^n(s_t^n)$.
		\State Smooth the new observation with the previous value, \begin{equation} z_t^n(s_t^n) = ({1 - \alpha_n})\overline{V}^{n-1}_t(s_t^n) + {\alpha_n}\hat{\upsilon}_t^n(s_t^n) \label{eq:Smoothing} \end{equation}
		\State Perform value function monotonicity projection operator as in Jiang and Powell (2015)
		\State Determine next state, $S_{t+1}^n$
	\EndFor
	\State Increment $n = n+1$
\ENDWHILE
\end{algorithmic}
\end{algorithm}

The algorithm iteratively proceeds in accordance with \citep{Jiang15} in lines 8-12.  With each iteration, we select the best action starting from a random initial state (line 10) and move forward in time (line 11) using a sample observation of uncertainty and the approximate value of the future state, $\overline{V}^{n-1}_{t+1} (j)$ (line 12).  Specifically, we pick the best action (line 12) using Equation \eqref{eq:Arg3} and store the current observed value using Equation \eqref{eq:uOberveration}. \textcolor{black}{In Equation \eqref{eq:uOberveration}, we use  the previous approximation of the future states, $\overline{V}^{n-1}_{t+1} (s_{t+1})$, as the approximation of $\mathbb{E}({V}_{t+1} \mid s_t, a_t$). }

\begin{equation}
 a^n_{s_t,t} =
\underset{a_t\in A_{s_t}}{\arg\max}\left\{r_t(s_t,a_t) + \overline{V}^{n-1}_{t+1} (s_{t+1})\right\}. 
\label{eq:Arg3}
\end{equation}
 
\begin{equation}
\hat{\upsilon}_t^n(s_t^n)=
  \max_{a_t\in A_{s_t}}\left\{r_t(s_t,a_t) + \mathbb{E}({V}_{t+1} \mid s_t, a_t)\right\} 
\label{eq:uOberveration}
\end{equation}

In line 13, we use a combination of the current value function approximation, $\overline{V}^{n-1}_t(s_t^n)$, and the current observed value, $\hat{\upsilon}_t^n(s_t^n)$ to calculate the smoothed value of being in state $s_t$, $ z_t^n (s_t^n)$.  This combination is weighted based on a \textcolor{black}{stepsize} function, \textcolor{black}{$\alpha_n$}. Traditional \textcolor{black}{stepsize} functions, including $\sfrac{1}{n}$ \citep{Powell} and harmonic  \citep{Powell, Rettke16, Meissnera18}, usually smooth the value function using pure observations for early iterations and gradually put less weight on the observation and put more on the approximations as the number of iterations increases. 

Next, we apply the monotonicity projection operator as defined in Jiang and Powell \cite{Jiang15} (line 14).  \textcolor{black}{We note that the algorithm stores the value of all the states, and t}he monotonicity operator adjusts the value of non-visited states according to the value of the visited state. It ensures that no approximated value violates the monotonicity of the value function. For instance, consider the visited state $s_t$ and an arbitrary state $\widetilde{s_t}$ such that $s_t \le \widetilde{s_t}$ and $\overline{V}_t(s_t) > \overline{V}_t(\widetilde{s_t})$. The monotonicity operator increases the approximation for $\widetilde{s_t}$ up to $\overline{V}_t(s_t)$ and preserves the monotonicity property of the value function. Similarly, we decrease the approximation for lower states (i.e., $\widetilde{s_t} \le s_t$) with higher value approximations than the visited state. Lastly, the algorithm moves forward in time to the next decision epoch until the last decision epoch, wherein a new iteration begins.  Every new iteration starts from an arbitrary state and steps forward in time until the input number of iterations are completed.

\subsubsection{\textcolor{black}{Stepsize} Function} \label{Sec:Stepsize}

Finding a good \textcolor{black}{stepsize} is a problem-dependent procedure that requires empirical experiments \citep{Powell}.  \textcolor{black}{A} \textcolor{black}{stepsize} function, $\alpha_n$ is used to scale the current observed value and $(1-\alpha_n)$ is used to scale the current value function approximation.  Because the value function approximation is often initially set to constant values and therefore is not informative, many \textcolor{black}{stepsize}s start with higher $\alpha_n$ values that emphasize the observed value.  Then, as more iterations are conducted, $\alpha_n$ is decreased in order to place a greater emphasis on the value function approximation. \textcolor{black}{We note that, a \textcolor{black}{stepsize} function needs to satisfy the three basic conditions given by Powell \cite{Powell} to guarantee convergence.} A basic example of such \textcolor{black}{stepsize} function is $\sfrac{1}{n}$. Our preliminary experiments show that using the $\sfrac{1}{n}$ \textcolor{black}{stepsize} function is not appropriate as the rate of decreasing \textcolor{black}{$\alpha_n$} over iterations is too fast for SAIRPs. Instead, we use the harmonic \textcolor{black}{stepsize} function \citep{Powell} \textcolor{black}{that} uses a user-defined parameter, $w$, to controls the rate of decrease over iterations.  In Equation \eqref{eq:css1}, we provide the formal definition of the harmonic \textcolor{black}{stepsize} function.

\begin{equation}
 \alpha_{\textcolor{black}{n}} = \frac{w}{w+n-1}
\label{eq:css1}
\end{equation}

Additionally, \textcolor{black}{we use the Search-Then-Converge (STC) \textcolor{black}{stepsize} rule that can control the rate of decrease in $\alpha_{n}$ by appropriately setting the parameter values \citep{Powell}. Hence, the STC function is suitable for cases like ours that need an extended learning phase \citep{Powell}.  The STC rule was initially proposed by  Darken and Moody \cite{Darken92}; however, we use the generalized STC formula given by George and Powell \cite{George06} presented in Equation \eqref{eq:GPSTC}.}

\textcolor{black}{\begin{equation}
\alpha_{n} = \alpha_0 \frac{\bigg(\frac{\mu_2}{n} + \mu_1 \bigg)}{\bigg(\frac{\mu_2}{n} + \mu_1  + n ^{\zeta}-1\bigg)}
\label{eq:GPSTC}
\end{equation}}

The harmonic and \textcolor{black}{STC} \textcolor{black}{stepsize} functions follow the common process of weighting the observation higher earlier and decreasing this weighting with each iteration. \textcolor{black}{The harmonic and STC stepsize functions are classified as deterministic as their values do not change based on the observations. In contrast, adaptive stepsize functions are sensitive to changes in the observations. To broaden our investigation, we also use the adaptive stepsize first introduced in George and Powell \cite{George06} in Section \ref{ComputationalResults}.}

\section{Computational Results}\label{ComputationalResults}

In this section, we present the results and insights from computational experiments on different SAIRP instances.  First, we present the data used to solve modest and realistic-sized SAIRP instances.  We clarify how we distinguish modest, realistic-sized, and small SAIRP instances.  We denote SAIRP instances as modest if they are optimally solvable using backward induction (BI) (i.e., 7 batteries total and only a one-week time horizon).  We denote SAIRP instances as realistic-sized if they include larger numbers of batteries and the system is considered for longer time horizons (i.e., 100 batteries over a one-month time horizon).  Due to the curses of dimensionality, the realistic-sized instances are not optimally solvable using BI.  Within our monotone approximate dynamic programming with regression-based initialization solution procedure, we initially solve small SAIRP instances that are optimally solvable using BI in a matter of minutes (i.e., 2, 3, or 4 batteries over a one-week time horizon).  After explaining the data for these instances, we proceed by explaining in detail the regression-based initialization used in MADP-RB, the Latin hypercube sampling (LHS) designed experiments, solution method comparison, and solutions/insights for both the modest and realistic-sized SAIRPs.

\subsection{Explanation of Data} \label{S-Data}

For the computational results, we use realistic data representing the costs to recharge, discharge, and replace a battery, the demand, and the battery degradation rate.  To avoid redundancy in presenting similar insights for both EVs and drones, we focus on drones. First, we present the associated data, and then \textcolor{black}{we} show the results of a comprehensive set of experiments in Sections \ref{LHS} and \ref{ADP-large}. 

First, we set parameters associated with drone battery costs.  We set the cost to recharge a battery using the historical power prices from the Capital Region, New York area in 2016 \citep{Grid16}.  We use the time frame with the highest total power price for the modest and realistic-sized instances with the time horizon of a week and a month, respectively. Hence, as displayed in Figure (\ref{fig:season}), we select December 12-18 and the month of December for the modest and realistic-sized problems, using dashed and solid lines, respectively.  We multiply these historical time-varying power prices by the maximum capacity of a battery to calculate the non-stationary, time-varying costs to recharge a battery.  We assume a drone battery has a maximum capacity equal to 400 Wh as in the DJI Spreading Wing S1000 battery \citep{SpreadingW}.  We assume the revenue earned from discharging a battery back to the grid is equal to the charge price. Consistent with level 2 or 3 battery charging  \citep{morrow08, Ribbernick15, Tesla17}, we assume a depleted (full) battery takes one hour (i.e., \textcolor{black}{time between two consecutive} decision epoch\textcolor{black}{s}) of recharging (discharging) to become full (depleted).  We use the purchase price for batteries to calculate the cost of battery replacement.  Assuming the price per kWh of a battery is approximately \$235 \citep{Thinkprogress17}, we set the replacement cost of a drone battery to be \$100. This calculated replacement cost serves as the baseline price\textcolor{black}{,} which is used and varied in the Latin hypercube designed experiments in Section \ref{LHS}.

\begin{figure}[h]
\begin{center}
\includegraphics[scale=0.25]{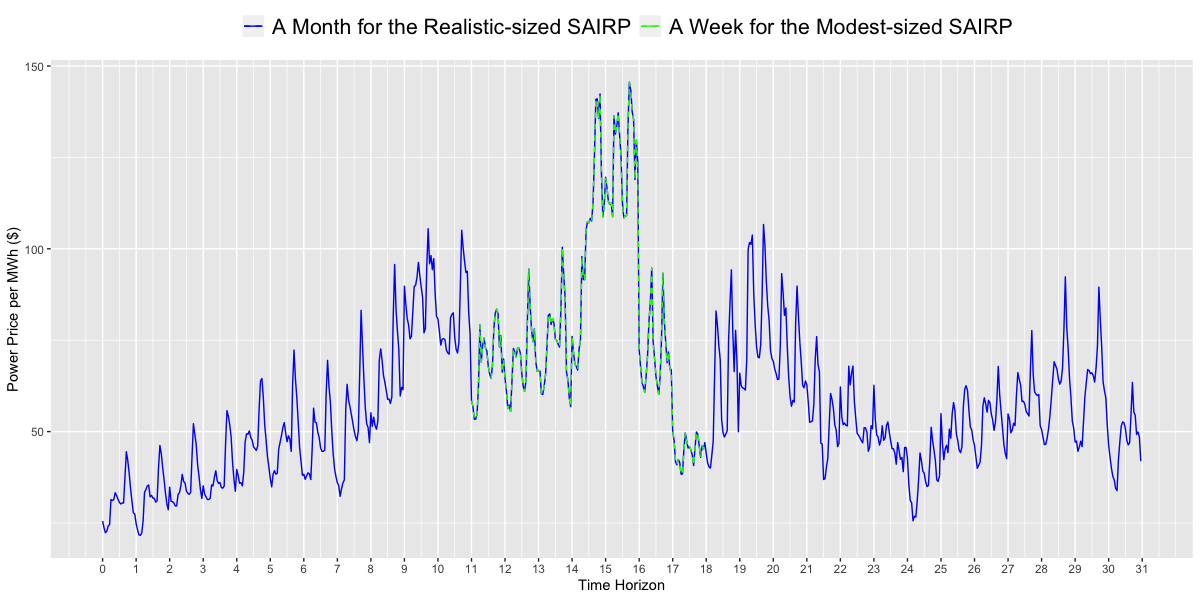}
\vspace{-0.05in}
\caption{Power price fluctuations over December (Realistic-sized SAIRP) and the week of Dec.\ 12-18 (Modest Size SAIRP), 2016 in the Capital Region, New York.}
\label{fig:season}
\end{center}
\end{figure}

In the absence of real data representing the number of customers demanding swaps at the station over time, we use the methodology of Asadi and Nurre Pinkley \cite {Asadi19}, \cite{Widrick16}, and \cite{Nurre14} to derive the mean demand at the swap station over time.  We assume the mean demand, $\lambda_t$, is equivalent to the historical arrival of customers at Chevron gas stations \citep{H2A}.  Using $\lambda_t$, we assume the demand follows a Poisson distribution where $t$ is the hour of the day.  We scale $\lambda_t$ to be in line with the number of batteries in the problem instance. Let $\lambda'_t$ be the scaled demand for $M'$ number of batteries. Because $\lambda_t$ is originally used for $M=7$, we calculate $\lambda'_t$ values by multiplying $\sfrac{M'}{7}$ by $\lambda_t$. In Figure (\ref{demand}), we display the mean demand by hour over a one-week time horizon for the modest instances with $M=7$. For longer time horizons, we assume the mean arrival of demand repeats every week.

\begin{figure}[h]
\begin{center}
\includegraphics[scale=0.2]{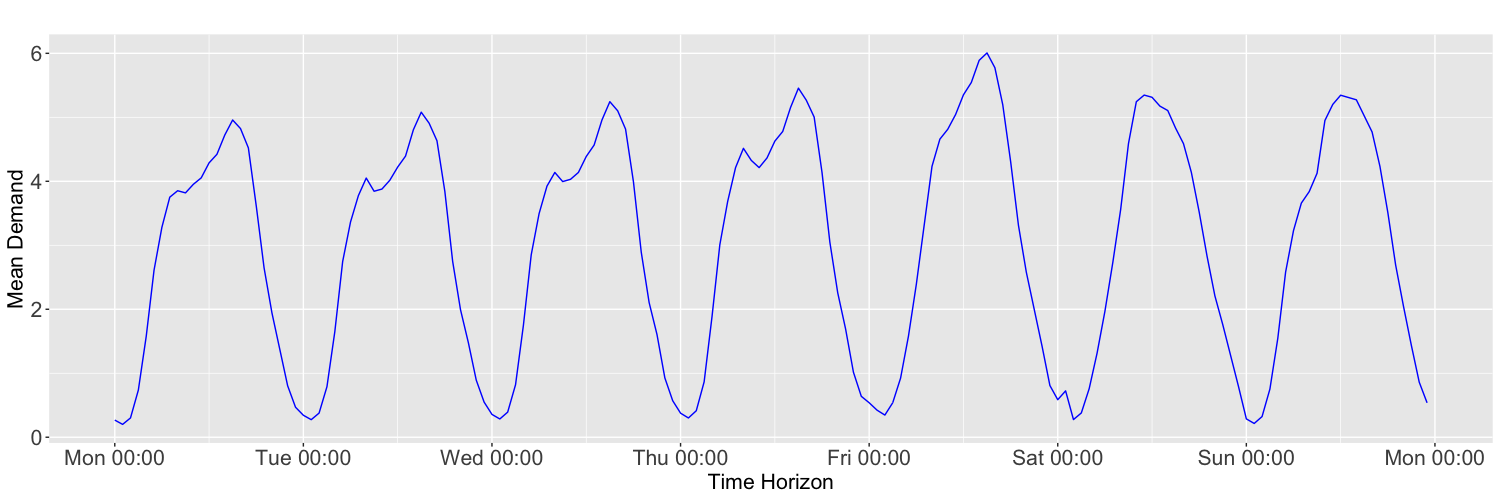}
\label{Figure 3}
\vspace{-0.1in}
\caption{Mean demand for swaps over time.}
\label{demand}
\end{center}
\end{figure}

We use existing studies to calculate the battery degradation rate per cycle.  Although there are several factors \textcolor{black}{that} influence the battery degradation process, research states that the capacity fading has a linear behavior, especially in the first 500 cycles \citep{Dubbary11, Hussein15, Lacey13, Ribbernick15}. 
\textcolor{black}{We note that the standard number of cycles for a drone Lithium-ion battery is 300 to 500 ($\delta^C \approx 0.1\%$) \citep{Liion17}. We select a higher value for the degradation rate to account for the elements that accelerate the degradation process, such as temperature from continuous use and recharging in swap stations. Thus, we select $\delta^C = 2\%$ as the baseline degradation rate and vary this value in our computational experiments to capture how changes in the degradation rate impact the policy and performance of the system. We note, the model is robust in that future experiments can be conducted for different baseline $\delta^C$ values to represent different degradation characteristics.} In general, the industry\textcolor{black}{-}accepted battery end of life value is 80\% of capacity.  As follows, we set the replacement threshold $\theta = 80\%$ \citep{wood11, DEBNATH2014}. \textcolor{black}{We note that $ s^2_t < 80\%$ is equivalent to the absorbing state of the system $s^2_t = 0$ wherein the feasible action set is $A_{(s^1_t, 0)} = \{(0, 0)\}$. Hence, we can only replace batteries when the average capacity of batteries is not less than 80\%.}

\subsection{Regression-Based Initialization} \label{RBI}
In this section, we explain our regression-based method to initialize the MADP-RB intelligently. We examine the empirical experiments of small SAIRPs and detect that the value function of the optimal policies $V_t(s_t)$ is a function of the decision epoch ($t$), the state of the system ($s_t$), and the number of batteries in the station ($M$). To clarify, we display the optimal values $V_t(s_t)$ of scenario 6 from the Latin hypercube designed experiments presented in Section \ref{LHS}.  We display two consecutive decision epochs in Figure (\ref{pmerged}).  In this example, we compute and show the optimal $V_t(s_t)$ when we solve small SAIRPs with $M = 2, 3, 4$. 

As shown in Figure (\ref{pmerged}), the horizontal axis denotes the states, and the vertical axis shows the corresponding values. Among various techniques to estimate or forecast $V_t ^0 (s_t)$, we want a fast and simple method to generate and assign the initial approximations. Therefore, we propose using the linear regression function presented in Equation \eqref{eq:reg} to initialize the approximated value function, $\overline V_t^0 (s_t)$.
\begin{equation}
 \overline V_t ^0 (s_t)= h_0 + h_1M+ h_2s^1_t + h_3s^2_t + h_4t
\label{eq:reg}
\end{equation}
Having solved the small SAIRPs, we find the appropriate values for $h_0$, $h_1$, $h_2$, $h_3$, and $h_4$.  In Sections \ref{LHS} and \ref{ADP-large}, we demonstrate how the intelligent initial value function approximation leads to superior results.

\begin{figure}[h]
\begin{center}
\includegraphics[scale=0.19]{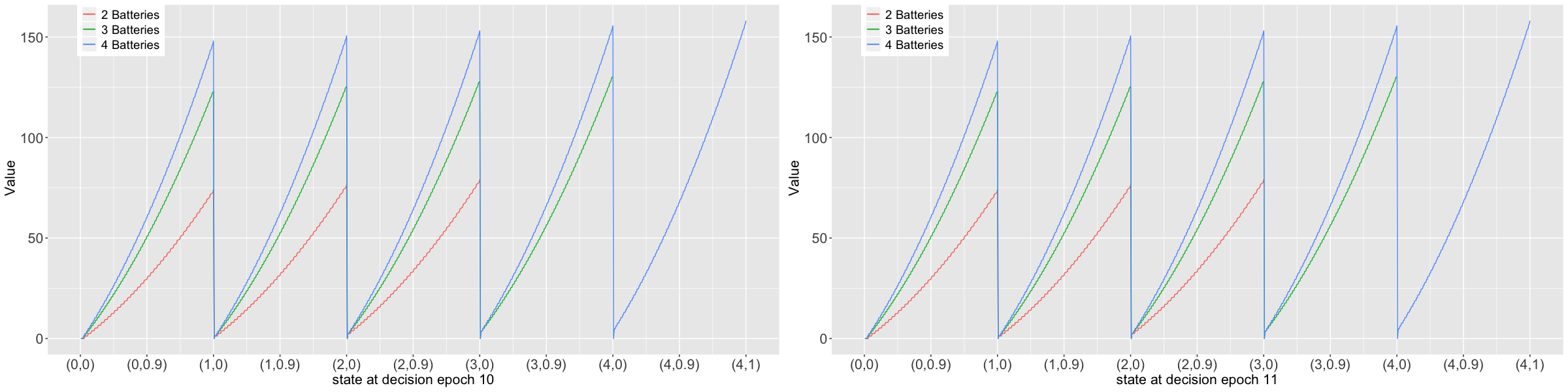}
\caption{An instance of the optimal values over states in two consecutive decision epochs}
\label{pmerged}
\end{center}
\end{figure}

\subsection{Latin Hypercube Designed Experiments for Modest SAIRPs} \label{LHS}

In this section, we perform a Latin hypercube sampling designed experiment \textcolor{black}{that is used to} assess the quality of the solution methods and to deduce insights for the battery swap station application.  A Latin hypercube sampling (LHS) designed experiment is a space-filling design with broad application in computer simulation \citep{Montgomery08}.  Using the LHS configuration of Asadi and Nurre Pinkley \cite{Asadi19}, \textcolor{black}{with the test set of 40 scenarios generated to cover the design space of parameters}, we first quantify the performance of MBI, MADP, and MADP-RB.

In prior work\textcolor{black}{,} Asadi and Nurre Pinkley \cite{Asadi19} was able to optimally solve 40 modest LHS scenarios using BI.  We run all computational tests using a high-performance computer with \textcolor{black}{four shared memory quad Xeon octa-core 2.4 GHz E5-4640 processors and 768GB of memory.} Even on a high-performance computer, the largest scenario Asadi and Nurre Pinkley \cite{Asadi19} is able to optimally solve using BI is with $M=7$, $\varepsilon = 0.001$, and $N=168$ which represents a full week of operations where each decision epoch is one hour.  Using these modest scenarios, herein, we are specifically concerned with quantifying the speed and performance of MBI, MADP, and MADP-RB against a known optimal policy and optimal expected total reward.  The factors and their associated lower and upper bounds are defined as in Table (\ref{LHSfactors}) (we refer the reader to Asadi and Nurre Pinkley \cite{Asadi19} for a complete justification for how these low and high values are calculated).

\textcolor{black}{\emph{\textbf{Memory Intensive Processing vs. Compute Intensive Processing.}} We can implement BI and MBI in two ways. In the first way, which we denote \emph{`Memory Intensive'}, we calculate and store all of the transition probability values, so they are readily available throughout the execution of the algorithm. In the second way, which we denote \emph{`Compute Intensive'}, we do not store any probability values and instead calculate each probability value when needed for calculations through the execution of the algorithm. As is indicated in their names, the \emph{Memory Intensive} way requires more memory and the \emph{Compute Intensive} way requires more time as it needs to calculate probability values numerous times. We implement and use both ways to solve the modest SAIRP instances on a high-performance computer (HPC). We note, on this HPC, we have different options. If we want to run the algorithm for longer, we have up to 72 hours of computational time available per run with access to only 768 GB of memory using four shared memory quad Xeon octa-core 2.4 GHz E5-4640 processors. However, if we want to use more memory, we have access to 3 TB of memory (four shared memory Intel(R) Xeon(R) CPU E7-4860 v2 2.60GHz processors) but only 6 hours of computational time. Thus, our results reflect these limitations. }

\textcolor{black}{In Table \eqref{TM}, we report the memory used and computation times for BI and MBI by instance and method. We use red to highlight the instances where we either exceeded the memory or time available and thus, do not find a solution. As is evident by the results, BI and MBI are not tractable solution methods for even modest-sized SAIRPs. MBI does outperform BI; however, the computation time is significant even for $M = 10$. Although not shown in Table \eqref{TM}, when $M \geq 12$, MBI exceeds the 72-hour time limit. We note that our approximate solution methods do not run into memory issues. Hence, we move forward with the faster processing method, \emph{Memory Intensive}, to solve SAIRPs hereafter.}

\begin{table}[H]\centering
\ra{1.2}
\caption{\textcolor{black}{Time and memory used for different size of SAIRPs using memory intensive and compute intensive methods.}}
\scalebox{0.7}{
\begin{tabular}{@{}ccccccccc@{}}\toprule
& \multicolumn{3}{c}{  Memory Intensive} & & \multicolumn{3}{c}{  Compute Intensive  } \\ \cline {2-4} \cline{6-8} 
& Memory & BI Computation & MBI Computation & & Memory & BI Computation & MBI Computation\\
Size 		& Used (GB)		& Time (h) & Time (h) & & Used (GB)	& Time (h) & Time (h) \\ \midrule 
$M=7$ 	& 830 			&  3.8  	&   3.6  & & 0.2  &  29.2  	&   7.5  \\
$M=8$ 	& 1120 			&  $\textcolor{red}{>6} $ 	& $\textcolor{red}{>6} $    & & 0.2  &  52.6  	&   11.3 \\
$M=9$ 	& 2030 			& $\textcolor{red}{>6} $ 	& $\textcolor{red}{>6} $     & & 0.2  &  $\textcolor{red}{>72} $  	&   16.3 \\
$M=10$ 	& $\textcolor{red}{>3072}$  & $\textcolor{red}{>6} $   	&$\textcolor{red}{>6} $     & & 0.2  &  $\textcolor{red}{>72} $  	&   24.8 \\
 \bottomrule
\end{tabular}
}
\label{TM}
\end{table}

We computationally test MBI for the general case when no limits are placed on how many batteries are replaced per decision epoch and the MADP and MADP-RB methods with the harmonic, \textcolor{black}{Search-Then-Converge (STC), and adaptive} 
\textcolor{black}{stepsize} functions. For the core ADP procedure, we run the scenarios for \textcolor{black}{$\tau = 500000$} iterations. For these \textcolor{black}{stepsize} functions, based on extensive computational experiments, we present the most favorable results.  \textcolor{black}{For the harmonic \textcolor{black}{stepsize} function, Powell \cite{Powell} recommends that $\alpha_{n} < 0.05$ in the last iteration ($\tau =  500000$). Thus, we set $\alpha_{\tau} = 0.05$ and $w = 25000$. We note, our tests show that the average optimality gap increases when $w$ is below 25000, thus we use $w = 25000$.  For setting the STC parameters, we let $\alpha_0 = 1$ to put the total weight on the observation at the beginning of the procedure. This results in $\alpha_1 = 1$. We test six values for $\mu_1$ = 10, 100, 500, 600, 1000, and 10000, five values for $\mu_2$ = 10, 100, 1000, 10000, and 100000, and 3 values for $\zeta$ = 0.6, 0.7, and 0.8 to adjust the tuple of parameters $(\mu_1, \mu_2, \zeta)$. Over these 90 experiments, we observe that $(\mu_1, \mu_2, \zeta)$ = (600, 1000, 0.7) yields the best result in terms of the average optimality gap. 
Thus, we use this tuple of parameter values for the STC stepsize. For the adaptive \textcolor{black}{stepsize} function, we use the setting presented in Powell \cite{Powell}. In the adaptive stepsize, we need to estimate the bias and its variance. For smoothing the observation and approximation of the bias and its variance, Powell \cite{Powell} recommends using a harmonic stepsize that tends to a value between 0.05 and 0.1 in the last iteration.  Consistent with this recommendation, we use the harmonic stepsize function with $w=25000$.}  For the MADP-RB, we set $\overline{M} = 2$, $\overline{T} = T = 168$ hours, MaxIterations = 2 \textcolor{black}{(i.e., 3 iterations are used as the input for the linear regression and value of $iter$ can be 0, 1, and 2)}, and use BI to optimally solve the scenarios for 2, 3, and 4 batteries.  We then plug these results into Equation \eqref{eq:reg} to set the initial approximation as in line 6 of Algorithm \eqref{alg:Amin}.

 \begin{table}[ht]\centering
\ra{1.3}
\caption{Factors with associated low and high values for use in the Latin hypercube designed experiment.}
\scalebox{0.8}{
\begin{tabular}{@{}ccc@{}}\toprule
Factor & Low & High \\ \midrule
Basic revenue per swap ($\beta$)  &1& 3 \\
Replacement cost $L_t$ &  2 & 100  \\ 
Battery degradation factor $(\delta^C)$ & 0.005 & 0.02 \\
 \bottomrule
\end{tabular}}
\vspace{0.1in}
\label{LHSfactors}
\end{table}

\textcolor{black}{Later, we will show that adding the regression-based initialization to MADP significantly improves the quality of solutions. One can argue that initialization using any monotone value function leads to the same result. We tested this argument using a set of arbitrary monotone value functions and report the best-founded function. We call the algorithm with this arbitrary monotone value function initialization MADP-M. We note that in MADP, the initial value function is a constant and equals to $\overline V_t^0 (s_t) = 0 \ \forall s_t \in S$. In MADP-M, the monotone value function used for initialization is $\overline V_t ^0 (s_t) = \big(\frac{\beta(1+s^2_t - 2 \theta)}{1-\theta}\big) (s^1_t) + k(N-t)$ where $k$ is a constant. The first term is a monotone value function that equals the revenue generated from swapping all of the full batteries when in state $s_t$ at time $t$. The second term is a non-negative term with an opposite relationship with time, which means the value of being in every state, $s_t$, is higher at earlier decision epochs. The logic behind adding the non-negative term is as follows. The value of being in state $s_t$, $V_t(s_t)$, accumulates the reward from time $t$ onward to the end of the time horizon, so we may expect to gain a higher reward (profit) over a longer time. Although $V_t(s_t)$ is not always decreasing in $t$, the general trend is observed in the optimal values of small SAIRPs. Our tests show that MADP-M with $k \le 1$ outperforms MADP. Then, we test $k = 0.1, 0.2, \dots, 1$ and observe that MADP-M with $k=0.5$ yields the best result.} 

In Table (\ref{tab:SADP}), we summarize the 40 scenarios and the results comparing the approximate methods to backward induction. Specifically, we calculate an \textcolor{black}{\emph{average optimality gap over all scenarios}}, comparing MADP (monotone approximate dynamic programming), \textcolor{black}{MADP-M (monotone approximate dynamic programming algorithm with arbitrary monotone value function initialization)}, MADP-RB (monotone approximate dynamic programming algorithm with regression-based initialization), and Monotone Backward Induction (MBI) to the optimal expected total reward calculated using BI.  \textcolor{black}{In Equation \eqref{OptGap} for each scenario of} MADP\textcolor{black}{, MADP-M,} and MADP-RB, we input the expected total reward of the last iteration into \emph{the expected reward of the approximated method}. \textcolor{black}{Then, we can calculate the average and maximum optimality gaps over 40 scenarios given by the last two rows of Table (\ref{tab:SADP}).} 

\begin{equation}
\text{Optimality  Gap} = \bigg|\frac{\text{Expected Reward BI - Expected Reward  Approximated  Method} }{\text{Expected Reward BI}}\bigg| * 100\%
\label{OptGap}
\end{equation}

Immediately evident from Table (\ref{tab:SADP}) is that MADP-RB led to significantly smaller average and maximum optimality gaps than Jiang and Powell's \cite{Jiang15} MADP, \textcolor{black}{MADP-M,} and MBI. \textcolor{black}{Overall, we find that MADP-M outperforms MADP in regards to the average optimality gap. This demonstrates that initializing an ADP approach, even with an arbitrarily monotone value function does result in benefit. However, we further find that when considering both the average and maximum optimality gaps, MADP-RB significantly outperforms MADP-M. This demonstrates the significant benefit of the regression-based initialization. We proceed by further analysis of MADP and MADP-RB.} 


We note, MBI is a reasonable approximation for scenarios with high replacement costs\textcolor{black}{, but it does not provide competitive optimality gaps when replacement cost is low.}  However, when we limit the number of batteries replaced in each epoch in accordance with Theorem~\ref{Thm2}, the optimality gap is 0.00\%.  \textcolor{black}{MBI is smarter than BI and can save computational time when searching for the best policies. In other words, we do not need to loop over all the actions when using the monotonicity property of the optimal policy. However, as we showed, BI and even MBI are not computationally tractable for realistically sized instances of SAIRPs, which necessitates using approximate solution methods.}

\begin{table}[!htbp]\centering
\ra{0.77}
\caption{Optimality Gap (\%) of MADP, \textcolor{black}{MADP-M,} MADP-RB, and MBI}
\scalebox{0.62}{
\begin{tabular}{@{}ccrcrrrcrrrcrrrrrr@{}}\toprule
		&	&  		& 	& \multicolumn{3}{c}{  Harmonic \textcolor{black}{Stepsize} }		& &	\multicolumn{3}{c}{STC \textcolor{black}{Stepsize}}	& &	\multicolumn{3}{c}{Adaptive \textcolor{black}{Stepsize}}	& & 	 MBI \\	\cline{5-7}	\cline{9-11} \cline{13-15} 
		\addlinespace[0.2cm]
Scenario   & $\beta$ & $L_t$	& $\delta^c$	& 	MADP 	&  	MADP-M		&   	MADP-RB 	&  	& MADP 	&  	MADP-M		&   	MADP-RB  	&  	& MADP 	&  	MADP-M		&   	MADP-RB & &	 \\	\midrule	\addlinespace[0.05cm]
1	&	1.03	&	45	&	0.009	&	27.11	&	6.66		&	8.26		&	&	30.67	&	8.57		&	10.45	&	&	20.00	&	7.71		&	8.77	&	&	0.00	\\
2	&	1.08	&	82	&	0.011	&	14.14	&	13.68	&    12.10		&	&	16.09	&	12.19	&	10.58	&	&	7.67		&	3.90		&	2.40	&	&	0.09	\\
3	&	1.13	&	61	&	0.005	&	43.50	&	12.14	&	6.27		&	&	47.01	&	17.28	&	7.54		&	&	11.69	&	8.31		&	7.38	&	&	6.05	\\
4	&	1.20	&	69	&	0.009	&	26.08	&	8.28		&	1.26		&	&	30.10	&	10.24	&	3.00		&	&	19.84	&	8.81		&	3.34	&	&	0.00	\\
5	&	1.23	&	89	&	0.008	&	14.80	&	2.20		&	8.62		&	&	18.62	&	0.53		&	7.88		&	&	4.61		&	0.15		&	1.73	&	&	0.00	\\
6	&	1.26	&	47	&	0.010	&	27.76	&	12.06	&	9.72		&	&	30.10	&	13.51	&	10.95	&	&	19.66	&	12.37	&	9.59	&	&	0.00	\\
7	&	1.32	&	98	&	0.019	&	12.11	&	22.34	&	9.15		&	&	13.73	&	22.8		&	9.32		&	&	9.07		&	4.17		&	0.71	&	&	0.00	\\
8	&	1.39	&	94	&	0.011	&	13.49	&	8.38		&	12.76	&	&	16.33	&	7.97		&	10.82	&	&	7.28		&	2.56		&	2.68	&	&	0.00	\\
9	&	1.41	&	87	&	0.014	&	  6.93	&	19.06	&	15.37	&	&	9.39		&	18.87	&	14.55	&	&	4.33		&	6.35		&	3.31	&	&	0.00	\\
10	&	1.48	&	36	&	0.006	&	25.41	&	15.61	&	9.20		&	&	33.78	&	15.97	&	9.58		&	&	26.50	&	14.75	&	3.47	&	&	7.25	\\
11	&	1.53	&	68	&	0.016	&	17.28	&	7.24		&	3.17		&	&	18.05	&	6.56		&	1.60		&	&	13.51	&	1.43		&	0.94	&	&	0.00	\\
12	&	1.56	&	28	&	0.018	&	15.04	&	2.22		&	1.25		&	&	15.37	&	0.78		&	0.06		&	&	12.79	&	6.46		&	6.11	&	&	5.95	\\
13	&	1.60	&	57	&	0.010	&	27.51	&	14.45	&	10.10	&	&	31.56	&	15.74	&	11.84	&	&	20.77	&	14.18	&	1.46	&	&	0.00	\\
14	&	1.68	&	31	&	0.017	&	30.02	&	15.21	&	6.18		&	&	28.94	&	15.29	&	4.92		&	&	23.95	&	14.52	&	10.45&	&	11.05	\\
15	&	1.71	&	62	&	0.006	&	26.79	&	10.13	&	5.25		&	&	33.69	&	10.78	&	4.62		&	&	15.92	&	8.63		&	1.19	&	&	0.00	\\
16	&	1.76	&	44	&	0.017	&	19.14	&	0.36		&	1.14		&	&	20.91	&	0.32		&	0.20		&	&	14.72	&	3.62		&	2.88	&	&	0.00	\\
17	&	1.83	&	55	&	0.016	&	17.36	&	3.00		&	1.84		&	&	17.63	&	2.39		&	1.13		&	&	13.29	&	1.46		&	1.83	&	&	0.00	\\
18	&	1.88	&	51	&	0.019	&	11.38	&	10.00	&	7.55		&	&	13.74	&	10.51	&	6.57		&	&	8.32		&	1.32		&	1.00	&	&	0.00	\\
19	&	1.94	&	66	&	0.012	&	17.12	&	2.01		&	3.99		&	&	18.23	&	2.35		&	3.06		&	&	11.17	&	2.91		&	0.17	&	&	0.00	\\
20	&	1.95	&	73	&	0.019	&	11.22	&	11.59	&	9.15		&	&	13.51	&	10.74	&	8.47		&	&	9.27		&	1.76		&	0.82	&	&	0.00	\\
21	&	2.00	&	83	&	0.012	&	16.41	&	1.95		&	4.27		&	&	19.50	&	3.06		&	3.95		&	&	11.45	&	3.29		&	0.01	&	&	0.00	\\
22	&	2.07	&	20	&	0.013	&	60.56	&	52.87	&	0.38		&	&	53.56	&	54.87	&	1.59		&	&	56.97	&	54.10	&	0.39	&	&	53.97	\\
23	&	2.15	&	90	&	0.013	&	13.40	&	3.44		&	12.19	&	&	15.41	&	2.44		&	11.46	&	&	9.88		&	0.27		&	2.06	&	&	0.00	\\
24	&	2.16	&	41	&	0.007	&	33.37	&	31.29	&	2.02		&	&	27.58	&	29.17	&	3.07		&	&	35.00	&	33.40	&	0.94	&	&	30.69	\\
25	&	2.22	&	79	&	0.015	&	  7.42	&	7.37		&	12.45	&	&	10.11	&	7.45		&	12.03	&	&	5.07		&	3.08		&	1.55	&	&	0.00	\\
26	&	2.27	&	8	&	0.007	&	64.60	&	67.51	&	8.99		&	&	68.96	&	70.09	&	8.91		&	&	66.91	&	70.85	&	8.79	&	&	47.85	\\
27	&	2.32	&	25	&	0.017	&	59.25	&	45.07	&	4.37		&	&	59.72	&	40.87	&	5.03		&	&	44.85	&	44.52	&	5.13	&	&	49.80	\\
28	&	2.37	&	74	&	0.014	&	  8.40	&	8.27		&	13.87	&	&	8.27		&	7.62		&	12.80	&	&	4.45		&	3.34		&	2.95	&	&	0.00	\\
29	&	2.40	&	7	&	0.018	&	68.69	&	69.38	&	15.65	&	&	72.46	&	72.84	&	15.22	&	&	75.11	&	73.61	&	15.47&	&	74.43	\\
30	&	2.50	&	14	&	0.008	&	65.68	&	68.12	&	7.05		&	&	68.96	&	67.59	&	2.48		&	&	64.64	&	68.68	&	7.38	&	&	43.56	\\
31	&	2.51	&	24	&	0.015	&	59.93	&	54.56	&	2.08		&	&	48.80	&	54.09	&	1.12		&	&	59.03	&	51.93	&	3.00	&	&	56.76	\\
32	&	2.56	&	34	&	0.009	&	49.50	&	41.41	&	1.64		&	&	36.87	&	42.90	&	2.67		&	&	49.92	&	38.64	&	0.25	&	&	30.92	\\
33	&	2.64	&	39	&	0.010	&	52.19	&	34.27	&	7.55		&	&	49.33	&	36.56	&	6.50		&	&	37.41	&	32.55	&	7.73	&	&	25.82	\\
34	&	2.68	&	54	&	0.016	&	23.60	&	11.83	&	8.06		&	&	23.82	&	12.74	&	8.13		&	&	19.71	&	12.34	&	9.42	&	&	7.20	\\
35	&	2.71	&	3	&	0.007	&	31.16	&	34.05	&	7.69		&	&	31.22	&	33.44	&	7.45		&	&	33.33	&	33.40	&	7.69	&	&	53.11	\\
36	&	2.77	&	16	&	0.012	&	68.79	&	69.50	&	6.64		&	&	71.81	&	68.35	&	6.62		&	&	68.49	&	66.33	&	6.29	&	&	56.91	\\
37	&	2.83	&	21	&	0.013	&	69.68	&	64.86	&	2.29		&	&	69.44	&	63.10	&	3.12		&	&	67.79	&	63.29	&	3.33	&	&	58.57	\\
38	&	2.90	&	77	&	0.015	&	  8.18	&	4.20		&	5.16		&	&	9.62		&	3.62		&	5.48		&	&	4.58		&	1.10		&	1.14	&	&	0.00	\\
39	&	2.91	&	96	&	0.020	&	10.27	&	7.51		&	11.55	&	&	10.91	&	7.61		&	11.18	&	&	7.05		&	1.17		&	1.51	&	&	0.00	\\
40	&	2.96	&	12	&	0.005	&	59.05	&	66.65	&	7.42		&	&	63.61	&	67.87	&	6.93		&	&	65.77	&	68.09	&	7.69	&	&	17.20  \\ \midrule
\textbf{Avg Gap }	&		&		&		&	30.86	&	23.52	&	7.09	&	&	31.94	&	23.74	&	6.82	&	&	26.54	&	21.23	&	4.07 &	&	15.93	\\
\textbf{Max Gap}	&		&		&		&	69.68	&	69.50	&	15.65	&	&	72.46	&	72.84	&	15.22	&	&	75.11	&	73.61	&	15.47 &	&	74.43	
\\\bottomrule
\end{tabular}}
\label{tab:SADP}
\end{table}

As is expected, ADP methods take significantly less time than BI and MBI, which take on average 3.8 hours and 3.6 hours, respectively.  The computational time of MADP-RB includes three major operations in Algorithm \eqref{alg:Amin}: (i) obtaining the data for the regression function in steps 1-5;
(ii) deriving the regression function in step 6;
and (iii) calculating the monotone ADP results in steps 7-18. 
The average computational time for operations (i), (ii), and (iii) are 902 seconds (0.25 hours), less than 1 second, and \textcolor{black}{1025} seconds (\textcolor{black}{0.28 hours}), respectively. We note that (iii) is equivalent to the average computational time of MADP without the regression-based initialization. \textcolor{black}{In other words, if we execute the initialization steps offline, there is no difference between the computational time of MADP and MADP-RB as finding the coefficients of the regression function takes less than a second. }
As the problem instance of the SAIRP increases significantly, the required memory and time for BI and MBI also increase significantly.  However, MADP and MADP-RB enable us to solve realistic-sized SAIRP instances, as we demonstrate in Section \ref{ADP-large}.

Next, to compare the two ADP methods, we quantify a measure of convergence. One way to evaluate an ADP method is to focus not only on the ending performance but also on the convergence with each iteration. Thus, we calculate \textcolor{black}{\emph{the average optimality gap over all iterations}, $g^s_m$, for each method $m$ and scenario $s$. In this calculation, we average the optimality gap for all iterations ($\tau$) as in Equation \eqref{AvgDev}.}   

\begin{equation}\label{AvgDev}
g^s_m = \frac{1}{\tau}\sum^ \tau_{i=1} \frac{\big|\text{Expected total reward at iteration } i - \text{Optimal expected total reward} \big|}{\text{Optimal expected total reward}}*100\%
\end{equation}

\begin{figure}[!htbp]
\begin{center}
\includegraphics[scale=0.3]{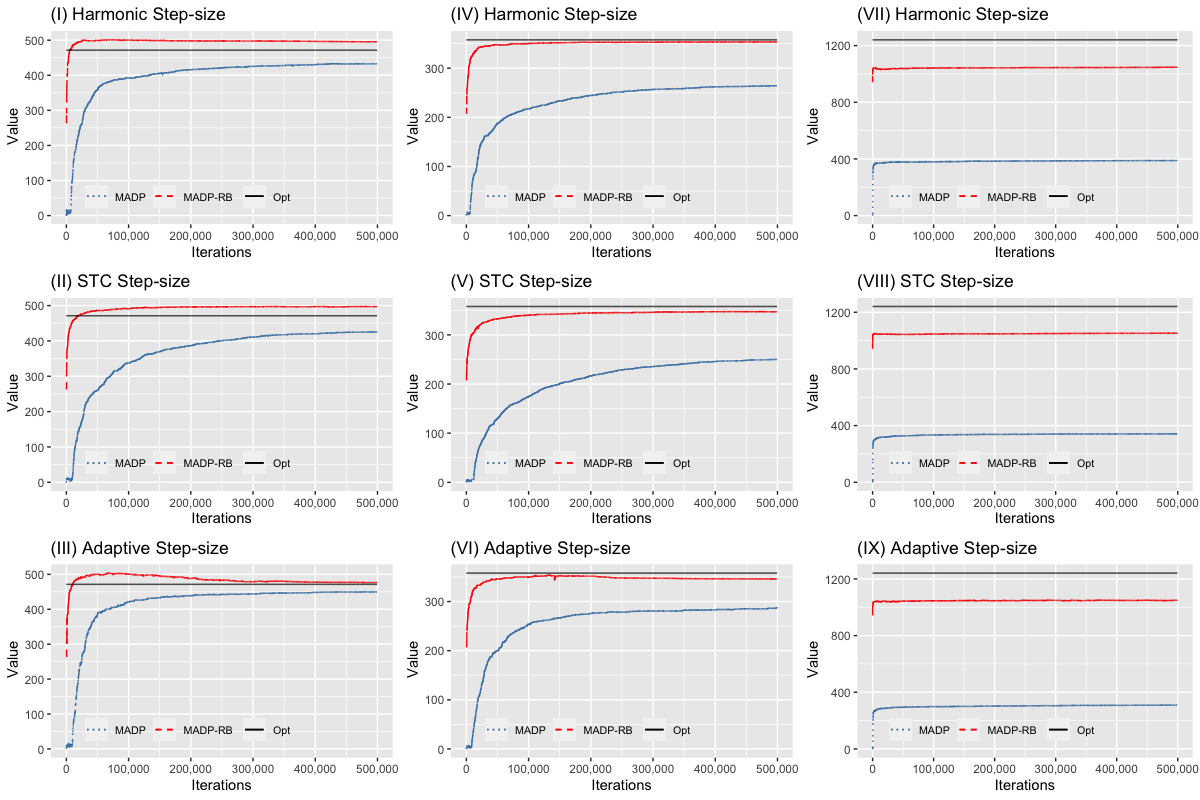}
\caption{Expected Total Reward Convergence of MADP and MADP-RB Using Different \textcolor{black}{Stepsize} Functions Vs Optimal Expected Reward.}
\label{ConvergenceSmall}
\end{center}
\end{figure} 

In Figure (\ref{ConvergenceSmall}), we show the expected total reward (value) as a function of the number of iterations for MADP and MADP-RB with the three different \textcolor{black}{stepsize}s.  The rows of Figure (\ref{ConvergenceSmall}) correspond to the harmonic, \textcolor{black}{STC, and adaptive} 
\textcolor{black}{stepsize}s, respectively.  The columns correspond to the scenarios which result in the \textcolor{black}{low, average, and high} $g^{s}_m$ in the LHS where $m$ includes MADP with the three \textcolor{black}{stepsize}s.  Within each subfigure, we display three lines showing the progression of the value by iteration for MADP and MADP-RB and the optimal value \textcolor{black}{for the state $s_1$, which corresponds to the initial state of the swap station when all batteries are full and new with full capacity.}  From the subfigures, we observe that MADP-RB consistently outperforms MADP in regards to convergence. 

\textcolor{black}{Now, we explain the possibility of convergence to a value greater than optimal (see Figures \eqref{ConvergenceSmall}-(I), \eqref{ConvergenceSmall}-(II), and \eqref{ConvergenceSmall}-(III)). The approximated value depends on two factors: (i) the previous approximation value and (ii) the present observation value. These values are smoothed using a stepsize function and set equal to the approximation value for the next iteration. Hence, as either value can be greater than the optimal value in any iteration, the smoothed value can take a value higher than the optimal value.  For example, suppose the initial approximation is too high. In that case, the smoothed value can be higher than the expected value of being in any state.}

\textcolor{black}{We also calculate the \emph{average optimality gap over iterations over scenarios} for each method $m$ using Equation \eqref{eq:Gbar}. In this equation, $m$ = MADP and MADP-RB, and the total number of scenarios in our Latin Hypercube designed experiments is 40.
\begin{equation}\label{eq:Gbar}
\overline {g_m} = \frac{\sum^{40}_{s=1}  g^s_m}{40} 
\end{equation}} 
In Table (\ref{tab:MADPs}), we display the \textcolor{black}{\emph{average optimality gap over iterations over scenarios}} for MADP and MADP-RB using different \textcolor{black}{stepsize} functions given by \textcolor{black}{Equation \eqref{eq:Gbar}}.  As shown in Table (\ref{tab:MADPs}), the average optimality gap associated with MADP-RB for all \textcolor{black}{stepsize}s is remarkably lower than MADP.  This indicates that MADP-RB is able to more quickly converge to the optimal value as compared to MADP.

\begin{table}[!htbp]\centering
\ra{1.2}
\caption{Comparison between convergence over iterations using MADP and MADP-RB.}
\scalebox{0.8}{
\begin{tabular}{ccccccc}\toprule
\multicolumn{7}{c}{ \textcolor{black}{Average Optimality Gap over Iterations over Scenarios} (\%)} 	\\	
\multicolumn{3}{c}{  MADP}		& &	\multicolumn{3}{c}{MADP-RB}			\\	\cline{1-3}	\cline{5-7}
Harmonic  	&  STC  & Adaptive & &   Harmonic  	&  STC  & Adaptive    \\	\midrule	
36.72	&	40.84  & 32.58 & 	&	6.89	&	8.53 & 5.90	\\
\bottomrule
\end{tabular}}
\vspace{0.1in}
\label{tab:MADPs}
\end{table}

The required computation effort for both ADP approaches depends on, in part, the number of iterations, $\tau$. Thus, we analyze how the value function changes by iteration\textcolor{black}{,} which will inform our choice of $\tau$ for future experiments.  In Figure (\ref{ConvergenceSmall}), we observe the value function plateaus before \textcolor{black}{$\tau = 500000$.}  Furthermore, we notice that both methods do not significantly change after $\tau=100000$ \textcolor{black}{in many scenarios}.  When examining all scenarios \textcolor{black}{solved by MARP-RB}, we calculate the percentage difference between the value at $\tau=100000$ and \textcolor{black}{$\tau=500000$}.  For the harmonic \textcolor{black}{stepsize}, the average percentage difference is \textcolor{black}{1.03\%} and the maximum percentage difference is \textcolor{black}{8.86\%}.  We obtained similar values for \textcolor{black}{the STC and adaptive} \textcolor{black}{stepsize} and scenarios; however, to avoid redundancy, we proceed with an analysis of the harmonic \textcolor{black}{stepsize}.

 As the value does not always convey the operational changes in the policy which decision makers implement, we proceed by analyzing the changes in policy at iteration 100000 and \textcolor{black}{500000}.  We use Equation \eqref{ReleativeDiff} to compute the percentage difference in the policies using at iteration \textcolor{black}{500000} and 100000 when \textcolor{black}{500 sample paths of realized demand generated.  With this equation, we calculate the percentage difference between the summation of the recharging/discharging and replacement actions as we decrease the number of iterations for each scenario. We then average this value over all scenarios. Specifically, in Equation \eqref{ReleativeDiff}, $TC_{s}^{a}$ and $\overline {TC_{s}^{a}}$ represent the total number of actions, recharging/discharging ($a = a^1$) and replacements ($a = a^2$), for scenario $s$ using $\tau = 500000$ and $\tau=100000$, respectively.} That is, $TC_{s}^{a^{1}}$ and $\overline {TC_{s}^{a^{1}}}$ denote the total number of recharging/discharging actions and $TC_{s}^{a^{2}}$ and $\overline {TC_{s}^{a^{2}}}$ are the total number of replacements for scenario $s$ using \textcolor{black}{500000} and 100000 iterations, respectively.  Our results show the average percentage difference in the number of recharged/discharged and replaced batteries over 40 scenarios are \textcolor{black}{6}\% and \textcolor{black}{5}\%, respectively.
\begin{equation}\label{ReleativeDiff}
\begin{split}
\text{ Average (\%) difference \textcolor{black}{for action $a$ over all scenarios}} =  \frac{1}{40} \sum^ {40}_{s=1} \frac{\big| \overline {TC_{s}^{a}} - TC_{s}^{a} \big|}{TC_{s}^{a} }*100\%.
\end{split}
\end{equation}
\noindent Thus, as the changes in the value and policies are not significant, we use $\tau=100000$ in our future computational experiments \textcolor{black}{for solving realistic-sized SAIRPs in Section \ref{ADP-large}}.

\textcolor{black}{Exploiting the monotonicity property does significantly improve the quality of solutions and convergence of ADP methods. To demonstrate this result, we run two versions of standard approximate value iteration (AVI) without and with regression-based initialization, AVI and AVI-RB, respectively. Adding the monotonicity property to AVI and AVI-RB converts them to MADP and MADP-RB, respectively. In Table \eqref{tab:AVI}, we present a summary comparing AVI, MADP, AVI-RB, and MADP-RB  using the harmonic stepsize function and 500000 iterations. In this table, we present the average optimality gap of AVI, MADP, AVI-RB, and MADP-RB over all 40 scenarios of our Latin hypercube designed experiments (see Table \eqref{tab:SADP} for full details of these 40 scenarios). As is evident by these results, using the monotonicity property decreases the average optimality gap. However, we observe that the regression-based initialization is even more impactful than the monotonicity operator. By adding the monotonicity operator, the average optimality gap decreases by 10.31\% (AVI to MADP) but by adding the regression-based initialization, the average optimality gap decreases by 26.42\% (AVI to AVI-RB). The lowest average optimality gap occurs with MADP-RB that has both the monotonicity property and regression-based initialization.}

\begin{table}[H]\centering
\caption{\textcolor{black}{Comparison between the average optimality gap of different approximate solution methods.}}
\begin{center}
\scalebox{0.8}{
\begin{tabular}{c c c c c c } \toprule
Approximate Method & 							AVI & 		MADP &  AVI-RB & 		MADP-RB  \\ \hline  \addlinespace[0.1cm]
Average Optimality Gap (\%) &	41.17\%	&	 30.86\%	& 14.75\%	&		7.09\% \\
\bottomrule
\end{tabular}}
\label{tab:AVI}
\end{center}
\end{table}

\subsection{Monotone ADP Results and Performance for Realistic-sized SAIRPs } \label{ADP-large}
In this next set of computational experiments, we focus on the ability to solve and deduce insights from realistic-sized stochastic SAIRPs.  We proceed by determining the parameters, summarizing the results, including the expected total reward and computational times, presenting sample paths of policies, and analyzing the relationship between the outputs and inputs of the experiments.

For the test instances, we solve all 40 scenarios of the designed experiment presented in Section \ref{LHS} using the realistic data summarized in Section \ref{S-Data}. Specifically, we consider 100 batteries, a one-month time horizon with each decision epoch representing one hour, and scaled mean demand $\lambda'_t = (\lambda_t)( \sfrac{M'}{7})$ where $M'=100$ and $\lambda_t$ is the original mean arrival of demand at time $t$ in line with the original modest-sized problem with $M=7$. Due to the curses of dimensionality, BI and MBI are not capable of solving these large problems; thus, we focus on the performance of MADP and MADP-RB with $\tau=$100000 iterations.

In Table (\ref{tab:Real}), we present the expected total reward and required computational time for the \textcolor{black}{MADP and MADP-RB} methods with \textcolor{black}{harmonic, Search-Then-Converge (STC), and adaptive} \textcolor{black}{stepsize}s. The average computational time of MADP-RB is only 8 minutes longer than MADP for all \textcolor{black}{stepsize} functions as a result of executing the regression-based initialization (see steps 1-6 in Algorithm \eqref{alg:Amin}). We highlight the highest expected reward for each row of Table (\ref{tab:Real}). We acknowledge that the highest expected total reward does not always indicate the lowest optimality gap; however, we use this as a metric of comparison in \textcolor{black}{the} absence of being able to determine the optimal solution and value for these realistic-sized SAIRPs. 

The results are very clear and consistent.  The MADP-RB value is always greater than MADP for all \textcolor{black}{stepsize} functions and scenarios. Within MARP-RB, we observe that harmonic, \textcolor{black}{STC, and adaptive} \textcolor{black}{stepsize} generate the highest expected total reward in \textcolor{black} {11, 3, and 26} scenarios, respectively. 

\begin{table}[h!]\centering
\ra{0.85}
\caption{Results of realistic-sized SAIRPs for the latin hypercube designed experiment.}
\scalebox{0.7}{
\begin{tabular}{@{}c>{\raggedleft}p{2cm}>{\raggedleft}p{2cm}>{\raggedleft}p{2cm}c>{\raggedleft}p{2cm}>{\raggedleft}p{2cm}r @{}}\toprule
		&	\multicolumn{3}{c}{MADP Expected Total Reward (\$)}	& &	\multicolumn{3}{c}{MADP-RB Expected Total Reward (\$)}\\	\cline{2-4}	\cline{6-8} \addlinespace[0.1cm]
Scenario & Harmonic 		&		STC &	 Adaptive 	&	& Harmonic 		&		STC &	 Adaptive   \\ \midrule \addlinespace[0.05cm]
1	&	3468.3	&	2791.7	&	3639.3& &	6964.3	&	7303.6	&\textbf{7884.1}\\	
2	&	5404.1	&	4900.6	&	4773.9&&	7073.1	&	\textbf{7622.2}	&7549.9\\	
3	&	9324.8	&	4219.4	&	8788.3&&	15443.0	&	15840.8	& \textbf{16491.2}\\	
4	&	5397.1	&	3890.3	&	5836.5&&	8684.0	&	9645.6	& \textbf{9710.6}\\	
5	&	4783.1	&	4923.2	&	4986.2&&	10308.5	&	10597.4	& \textbf{11079.1}\\	
6	&	4824.0	&	2363.0	&	5443.4&&	7545.4	&	8123.5	&\textbf{8797.8}\\	
7	&	3779.6	&	4415.5	&	4422.3&&	\textbf{5269.7}	&	5176.2	&5212.2\\	
8	&	4479.5	&	3401.0	&	3521.3&&	8128.5	&	9336.6	&\textbf{9486.3}\\	
9	&	2087.0	&	2546.6	&	2730.5&&	6848.6	&	7446.5	&\textbf{7780.6}\\	
10	&	8461.4	&	8736.6	&	7297.4&&	\textbf{22060.2}	&	19707.0	&20229.4\\	
11	&	3684.1	&	3385.4	&	3749.2&&	6581.5	&	7463.9	& \textbf{7889.3}\\	
12	&	2888.5	&	2842.8	&	3357.9&&	5903.6	&	\textbf{6052.8}	& 5900.4\\	
13	&	6248.1	&	4061.9	&	5994.1&&	11620.6	&	12228.4	&\textbf{12950.7}\\	
14	&	3253.2	&	2578.2	&	3426.1&&	7176.3	&	7258.9	& \textbf{7573.4}\\	
15	&	9983.3	&	9573.4	&	8814.8&&	16484.4	&	16326.9	&\textbf{16614.3}\\	
16	&	4446.9	&	3631.8	&	4474.7&&	7177.7	&	\textbf{7417.1}	&7268.3\\	
17	&	4789.0	&	3875.1	&	5372.4&&	7804.2	&	8601.2	&\textbf{8787.8}\\
18	&	3958.3	&	3634.7	&	4291.3&&	6900.5	&	6998.8 	& \textbf{7260.1}\\
19	&	5855.0	&	4561.5	&	6722.5&&	10514.8	&	10892.3	& \textbf{12124.5}\\	
20	&	4090.8	&	3676.9	&	2051.8&&	7415.2	&	7501.5	&\textbf{7529.3}\\	
21	&	7545.6	&	5562.7	&	7116.9&&	10685.1	&	12764.6	&\textbf{13478.4}\\	
22	&	5454.8	&	5539.8	&	5699.1&&	\textbf{33221.8}	&	31557.5	& 32379.8\\	
23	&	5629.3	&	5707.7	&	7111.5&&	11848.4	&	12005.5	&\textbf{12811.4}\\	
24	&	10593.2	&	10698.7	&	11037.8&&	\textbf{34805.9}	&	32215.2	& 32399.8\\
25	&	4963.0	&	5349.2	&	7186.8&&	10505.9	&	10074.7	& \textbf{11664.4}\\
26	&	51874.6	&	48577.1	&	50855.3&&	84745.6	&	83125.1	& \textbf{85354.7}\\
27	&	5224.7	&	5512.7	&	4699.4&&	16876.2	&	16771.0	& \textbf{17203.0}\\	
28	&	6366.9	&	5006.7	&	6514.1&&	11356.8	&	12461.2	& \textbf{12748.8}\\	
29	&	39244.6	&	34101.7	&	37832.0&&	\textbf{71877.0}	&	70920.1	& 71396.5\\
30	&	25188.0	&	24640.2	&	26046.0&&	79376.8	&	77325.9	& \textbf{80233.1}\\	
31	&	5373.0	&	5971.0	&	6832.8&&	\textbf{33432.0}	&	31564.1	& 32693.5  \\	
32	&	9216.7	&	10299.8	&	10055.9&&	\textbf{44189.8}	&	42772.4	& 43355.7\\	
33	&	9929.9	&	9783.1	&	10293.0 &&	\textbf{29615.5}	&	28922.0	& 28786.7\\	
34	&	5920.8	&	4409.0	&	6452.9 &&	11341.8	&	12115.3& \textbf{13178.0}\\
35	&	1,00038.0	&	98060.7	&	98254.7&&	122397.0	& 122683.0&	\textbf{124779.0}\\	
36	&	18743.7	&	17610.1	&	16362.2&&	\textbf{73562.5}		&	72612.0&	73060.5\\
37	&	8067.0	&	9426.7	&	12138.2&&	\textbf{60294.6}		&	58483.0&	59488.7\\
38	&	7726.4	&	5789.2	&	8285.0&&	12137.7		&	14070.3& \textbf{14674.7}	\\
39	&	5699.1	&	5683.8	&	5168.2&&	11029.9		&	10982.9&	\textbf{11158.6}\\
40	&	68536.0	&	63309.2	&	62951.7&&	\textbf{112542.0}		& 111153.0 &	112073.0	\\  \midrule
Avg CPU 	&	8.5	&	8.5		&	8.6 & &	8.6	&	8.6	& 8.7	\\ 
Time (hours)&		&			&	&&		&		&	\\ 
\bottomrule
\end{tabular}}
\label{tab:Real}
\end{table}

In addition to examining the expected total reward of the two ADP methods, we examine sample paths of the policies \textcolor{black}{when 500 sample paths of realized demand are generated. Our results show that the average percentage of demand met levels off around when the number of sample paths is greater than 200. As a result and to be conservative, f}or all scenarios, we calculate the \textcolor{black}{average} of demand met \textcolor{black}{for 500 sample paths}. We observe that the average of demand met \textcolor{black}{for 500 sample paths} over all the scenarios under MADP-RB is \textcolor{black}{25}\%, \textcolor{black}{29}\%, \textcolor{black}{and 19}\% more than MADP with harmonic, \textcolor{black}{STC, and adaptive} 
\textcolor{black}{stepsize} functions, respectively. The optimal policies are similar across these three \textcolor{black}{stepsize} functions, thus, we proceed by presenting further analysis for MADP-RB policy using the harmonic \textcolor{black}{stepsize} function. 
 
From the sample paths, we observe three typical behaviors within all LHS scenarios. In Figure~\eqref{AvgCapFluctuations}, we displayed \textcolor{black}{examples of} the fluctuations of the average capacity for these three categories \textcolor{black}{when the realized demand equals mean demand}. In Type 1, the average capacity remains over 95\% percent throughout the time horizon (see Figure (\ref{AvgCapFluctuations}a). This is achieved with many replacement actions. The scenarios that exhibit Type 1 behavior have low replacement costs and high revenue per swap (including scenarios 26, 29, 30, 35, 36, 40, 37, and 22). In Type 2 (see Figure (\ref{AvgCapFluctuations}b)), the average capacity does not stay as high as Type 1, but is maintained above 85\% for the entire time horizon except the very end of the month. \textcolor{black}{For 500 sample paths of demand, o}n average, in scenarios with Type 1 behavior, we observe replacement in \textcolor{black}{60}\% of the epochs and when a replacement occurs, \textcolor{black}{15}\% of the batteries are replaced. In contrast, in scenarios with Type 2 behavior, replacement occurs in \textcolor{black}{41}\% of the decision epochs and \textcolor{black}{7}\% of the batteries are replaced when a replacement occurs. 
The scenarios that exhibit Type 2 behavior have average replacement cost and revenue per swap values (includes scenarios 31, 27, 32, 24, 33, 10, and 15).  In these scenarios, it is beneficial to recharge batteries to ensure demand is met, but the station allows battery capacity to degrade and does not replace batteries as frequently.  Finally, in Type 3 (see Figure (\ref{AvgCapFluctuations}c)), the average battery capacity consistently decreases over the time horizon to the replacement threshold of 80\%.  Fewer batteries are replaced as, on average, we observe replacement in \textcolor{black}{11}\% of the decision epochs and only \textcolor{black}{3}\% of batteries are replaced when a replacement occurs.  This demonstrates a reduced priority for maintaining a high battery capacity.  \textcolor{black}{At} epochs when batteries are replaced, a small number of batteries are replaced at a time and are done to ensure the minimum capacity is maintained.  The scenarios that exhibit Type 3 behavior have high replacement costs and low swapping revenue values.

\begin{figure}[h]
\caption{Sample paths of average capacity over time horizon for three types of LHS scenario when realized demand equals means demand.}
\vspace{0.1in}
\includegraphics[scale=0.22]{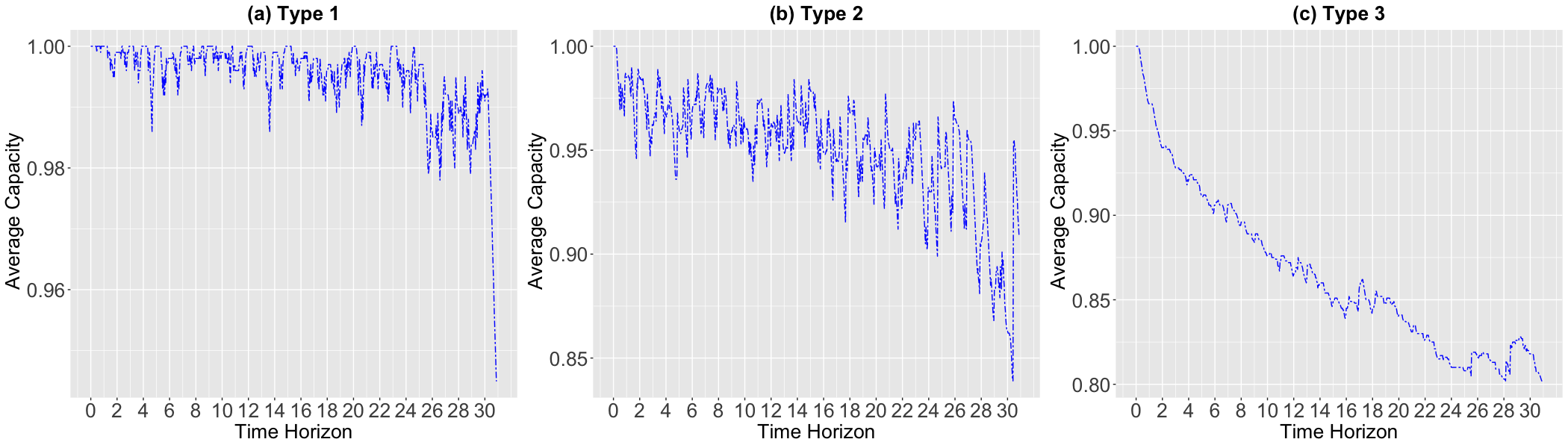}
\label{AvgCapFluctuations}
\end{figure}

Next, we dig into a specific scenario to compare and contrast the policies for the modest and realistic-sized instances. Our intent in this analysis is to determine whether similar actions are taken for the same scenario when considering more batteries over a long time horizon. We observed this is not the case, thereby justifying the need to solve realistic-sized instances. To demonstrate our observations, we present results for scenario 15 \textcolor{black}{that} has meaningful differences with regard to the replacement actions, charging actions, and amount of demand met between the modest and realistic-sized instances. In the realistic-sized problem, on average, we observe replacement in \textcolor{black}{27}\% of the decision epochs, and when replacement occurs \textcolor{black}{4}\% of the batteries are replaced. However, the derived policy in the modest problem consists of no replacement actions. With regard to charging, on average, we see charging actions in \textcolor{black}{97}\% and \textcolor{black}{40}\% of decision epochs for the realistic-sized and modest problems, respectively. With more frequent charging and replacement actions in the realistic-sized problem, the percentage of met demand is higher than the modest problem. The derived policy for the realistic-sized problem satisfies \textcolor{black}{76}\% of demand, which is significantly higher than 54\% of the demand met for the modest problem. From this analysis, while we must solve a finite horizon MDP to capture the time-varying elements, it is necessary to be able to computationally solve instances with longer time horizons and \textcolor{black}{a} larger number of batteries that mimic reality in order to determine the general operating policies. 

In Figure (\ref{Case15Realistic}), we present a sample path of the policy when realized demand equals the mean demand for the realistic-sized SAIRP associated with scenario 15.  
We observe that more batteries are replaced before epochs with high power prices (e.g., 53\% on day 12 before the high power price days of December 12-18), which is consistent among Type 2 scenarios. When this replacement occurs, the battery capacity is higher\textcolor{black}{,} which results in higher swapping revenue to offset to higher costs to recharge batteries. Overall, we observe a consistent trend for recharging batteries and meeting demand during the time horizon. The number of full batteries is kept between 40\% and 60\% of the total number of batteries during the middle of the day and recharging is conducted to raise the number of full batteries to 90\% over night. We show in Figure (\ref{Case15MetDemand}) that the policy meets 100\% of demand during off-peak epochs and more than 50\% during peak epochs.

\begin{figure}[h]
\caption{Sample path of the policy for scenario 15 when realized demand equals mean demand.}
\vspace{0.1in}
\begin{center}
\includegraphics[scale=0.23]{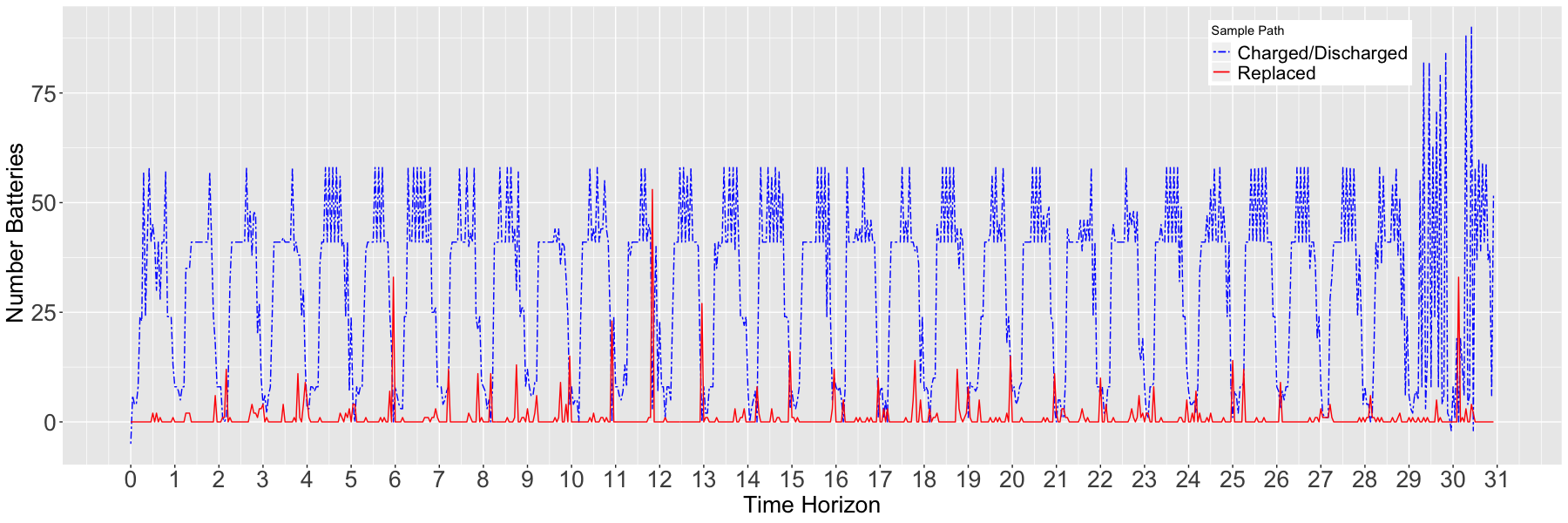}
\end{center}
\label{Case15Realistic}
\end{figure}

\begin{figure}[h]
\caption{Demand and met demand for scenario 15 based on the sample path when realized demand equals mean demand.}
\vspace{0.1in}
\begin{center}
\includegraphics[scale=0.23]{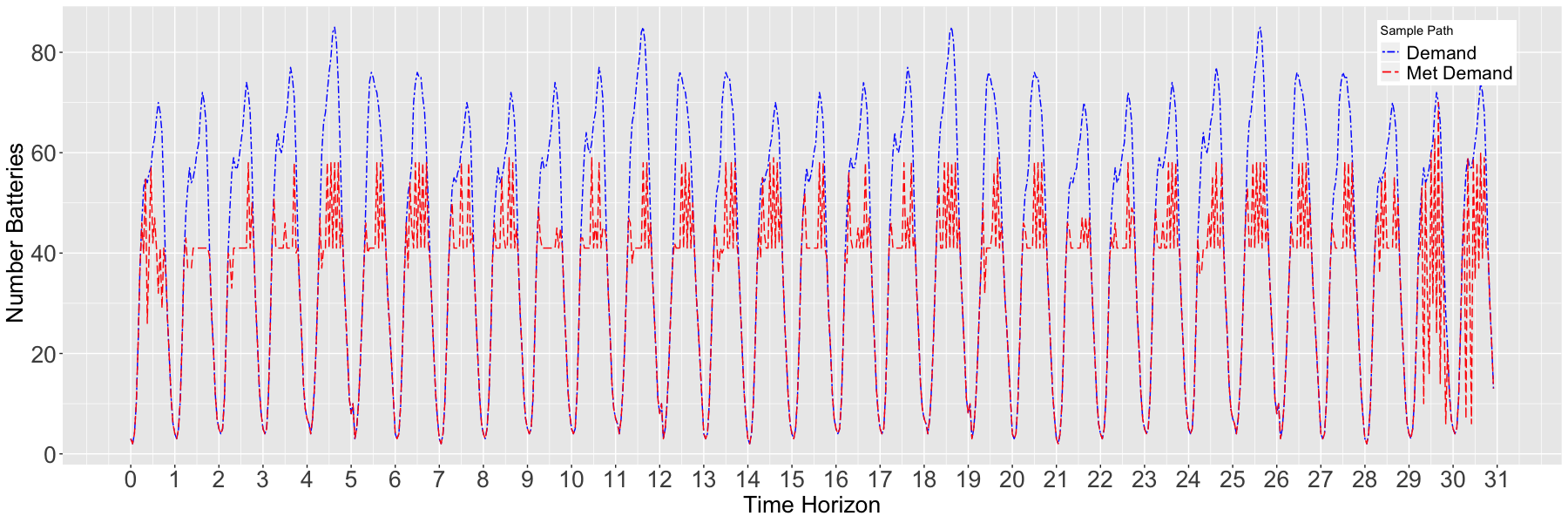}
\end{center}
\label{Case15MetDemand}
\end{figure}

\section{\textcolor{black}{Conclusions}} \label{Conclusion}
We examined a stochastic scheduling, allocation, and inventory replenishment problem (SAIRP) for a battery swap station where there is a direct link between the inventory level and necessary recharging and replacement actions for battery charge and battery capacity. Specifically, the direct link is that the act of recharging a battery to enable short\textcolor{black}{-}term operation is the exact cause for long-term battery capacity degradation. This creates a unique problem where trade-offs between recharging and replacing batteries must be analyzed. We utilized a Markov Decision Process (MDP) model for a battery swap station faced with battery degradation and uncertain arrival of demand for swaps. In the MDP model of the stochastic SAIRP, we determine the optimal policy for charging, discharging, and replacing batteries over time when faced with non-stationary charging prices, non-stationary discharging revenue, and capacity-dependent swap revenue. We prove theoretical properties for the MDP model, including the existence of an optimal monotone policy for the second dimension of the \textcolor{black}{state} when there is an upper bound placed on the number of batteries replaced in each decision epoch. Further, we \textcolor{black}{prove} the monotonicity of the value function. Given these results, we solved the stochastic SAIRP using both backward induction (BI) and monotone backward induction (MBI). However, we run into the curses of dimensionality as we are unable to find an optimal policy for realistic-sized instances.

To overcome these curses, we propose a monotone approximate dynamic programming (ADP) solution method with regression-based initialization (MADP-RB). The MADP-RB builds upon the monotone ADP (MADP) algorithm\textcolor{black}{,} which is fitting for problems with monotone value functions \citep{Jiang15}. In our MADP-RB, we initialize the value function based on an intelligent approximation using regression. 
We used a Latin hypercube designed experiment to test the performance of MBI, MADP, and MADP-RB. In the designed experiment, we examine both modest\textcolor{black}{-}sized problem instances that are optimally solvable using BI and realistic-sized instances. Overall, we observed that MADP-RB resulted in the greatest performance in terms of computational time, optimality gap, convergence, and expected total reward.

Our investigation into SAIRPs opens the avenue for many opportunities for future work. In terms of the model and solution method, researchers should examine state reduction/aggregation approaches and quantify how they impact the quality of the solutions and computational time.  \textcolor{black}{ Future research should consider different mechanisms within a disaggregated MDP model that captures individual battery states and actions. Furthermore, it is valuable to solve the disaggregated MDP using approximate solution methods and compare the insights with the presented aggregated MDP. Moreover, it is interesting to study discretizing the state of charge of batteries and allow partial recharging decisions. } For the battery swap station application, future work should consider different types of charging options, multiple swap station locations, and multiple classes of demand dictated by how long the battery must operate to satisfy the demand (e.g., how far a drone must fly).

\section*{Acknowledgement}
This research is supported by the Arkansas High Performance Computing Center which is funded through multiple National Science Foundation grants and the Arkansas Economic Development Commission.

\clearpage

\section*{APPENDICES}
\subsection{Monotonicity of the Second Dimension of the \textcolor{black}{State} }\label{APP1}



\setcounter{theorem}{0}
\setcounter{lemma}{0}

In this Appendix, in Lemma~\ref{Thm1}, we prove that \textcolor{black}{the stochastic SAIRPs violate the sufficient conditions for the optimality of a monotone policy in the second dimension of the state.}  Then, in Theorem~\ref{Thm2}, we prove the special conditions under which a monotone policy in the second dimension of the \textcolor{black}{state} is optimal.  First, we \textcolor{black}{state Lemma~\ref{Thm1} and then} prove   \textcolor{black}{Lemmas~\ref{lema1}, \ref{lema2}, \ref{lema3}, and \ref{lema4}}  that we use \textcolor{black}{to prove} Lemma~\ref{Thm1} and Theorem~\ref{Thm2}.

\setcounter{lemma}{0}

\begin{lemma}\label{Thm1} \textcolor{black}{The stochastic SAIRPs violate the sufficient conditions for the optimality of a monotone policy in the second dimension of the state.}
 \end{lemma} 

\begin{lemma}\label{lema1}
If  $ s^2_t  \ge \widetilde{s^2 _t}$ and $a^2_t, a^1_t, s^1_t$ are constant then $j^2 \ge \widetilde{j^2 }$.
  \end{lemma}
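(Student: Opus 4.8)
The plan is to establish Lemma~\ref{lema1} by showing that the second-dimension transition map $f^2$ of Equation~\eqref{eq:4a} is non-decreasing in its capacity argument $s^2_t$ whenever $s^1_t$, $a^1_t$, and $a^2_t$ are held fixed. Since $j^2 = f^2(s^1_t, s^2_t, a^1_t, a^2_t)$ and $\widetilde{j^2} = f^2(s^1_t, \widetilde{s^2_t}, a^1_t, a^2_t)$, this monotonicity directly yields $j^2 \ge \widetilde{j^2}$ whenever $s^2_t \ge \widetilde{s^2_t}$.

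First I would simplify the interior map $g^2$ of Equation~\eqref{eq:4}. Writing $A = a^{1+}_t + a^{1-}_t$ for the total number of batteries recharged or discharged and expanding the numerator, the terms $s^2_t A$ cancel, so the numerator collapses to the affine expression $s^2_t(M - a^2_t) + a^2_t - \delta^C A$; hence
\[
g^2 = \mathrm{round}\!\left(\frac{s^2_t(M - a^2_t) + a^2_t - \delta^C A}{M}\right).
\]
Because $a^2_t \in A^2_t = \{0, \ldots, M - s^1_t\}$, the slope $M - a^2_t \ge s^1_t \ge 0$, so the argument of $\mathrm{round}(\cdot)$ is non-decreasing in $s^2_t$. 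Since $\mathrm{round}(\cdot)$ is itself a non-decreasing projection onto the grid $S^2$, it follows that $g^2$ is non-decreasing in $s^2_t$; writing $g = g^2(s^1_t, s^2_t, a^1_t, a^2_t)$ and $\tilde g = g^2(s^1_t, \widetilde{s^2_t}, a^1_t, a^2_t)$, we obtain $g \ge \tilde g$.

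Next I would lift this monotonicity through the threshold in Equation~\eqref{eq:4a} by a short case analysis on $\tilde g$. If $\tilde g \ge \theta$, then $g \ge \tilde g \ge \theta$, so $f^2$ returns the $g$-values in both instances and $j^2 = g \ge \tilde g = \widetilde{j^2}$. If instead $\tilde g < \theta$, then $\widetilde{j^2} = 0$, whereas $j^2 \in \{g, 0\}$ is non-negative in either subcase (using $\theta > 0$), so $j^2 \ge 0 = \widetilde{j^2}$. In all cases $j^2 \ge \widetilde{j^2}$, which is exactly the claim.

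The only genuinely delicate step is the algebraic cancellation in the first paragraph: verifying that the $s^2_t A$ contributions cancel so that the numerator is truly affine in $s^2_t$ with the clean non-negative slope $M - a^2_t$, and confirming that $\mathrm{round}(\cdot)$ preserves the weak ordering on the discretized grid. Once monotonicity of $g^2$ is in hand, the thresholding argument via $f^2$ is routine.
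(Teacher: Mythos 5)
Your proof is correct and follows essentially the same route as the paper's: expand the numerator of Equation \eqref{eq:4} so that the $s^2_t(a^{1+}_t+a^{1-}_t)$ terms cancel, leaving the affine form $s^2_t(M-a^2_t)+a^2_t-\delta^C(a^{1+}_t+a^{1-}_t)$, and observe that the slope $M-a^2_t$ is non-negative. Your version is in fact slightly more complete, since you explicitly check that the rounding projection and the $\theta$-thresholding of Equation \eqref{eq:4a} both preserve the ordering, whereas the paper dismisses the former ``without loss of generality'' and does not address the absorbing-state case at all.
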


\begin{proof}
Using Equation \eqref{eq:4}, the second state of the system transitions from $s^2_t$ to $j^2$ and from $\widetilde{s^2_t}$ to $\widetilde{j^2 }$ when action $a_t = (a^1_t, a^2_t)$ is taken. In Equation \eqref{eq:4}, we use the $round()$ function to return values in the discretized state space.  Without loss of generality, we examine the state transition for the second state of the system without the $round()$ function, as the $round()$ function does not alter the validity of the arguments.  Thus, we have

\begin{equation}\label{Added1}
\begin{gathered}
j^2 = s^2_{t+1}= \frac{1}{M}\left[s^2_t ( M - a^2_t ) + a^2_t - \delta^C (a^{1+}_t + a^{1-}_t)\right], \\
\widetilde{j^2 } = \widetilde{s^2_{t+1}}= \frac{1}{M}\left[\widetilde{s^2_t} (M - a^2_t ) + a^2_t - \delta^C (a^{1+}_t + a^{1-}_t)\right]. 
\end{gathered}
\end{equation}
We start from $j^2 \ge \widetilde{j^2 }$ and reduce it to an always true statement.
\begin{center}
$j^2 \ge \widetilde{j^2 } \Leftrightarrow $ \\
$\frac{1}{M}\left[s^2_t ( M - a^2_t ) + a^2_t - \delta^C (a^{1+}_t + a^{1-}_t)\right] \ge \frac{1}{M}\left[\widetilde{s^2_t} (M - a^2_t ) + a^2_t - \delta^C (a^{1+}_t + a^{1-}_t)\right] \Leftrightarrow $ \\
$s^2_t ( M - a^2_t ) + a^2_t - \delta^C (a^{1+}_t + a^{1-}_t) \ge \widetilde {s^2_t} ( M - a^2_t ) + a^2_t - \delta^C (a^{1+}_t + a^{1-}_t) \Leftrightarrow $ \\
$s^2_t ( M - a^2_t ) \ge \widetilde {s^2_t} ( M - a^2_t ).$  \\
\end{center}
As $a^2_t \leq M$,  $(M - a^2_t )$ is always non-negative.  Thus, we can divide both sides of the last inequality by this non-negative value and get $ s^2_t  \ge \widetilde{s^2 _t}$ which is always true.

\end{proof}

\begin{lemma}\label{lema2}
 If  $ s^2_t$, $a^1_t$, and $\theta$ are fixed, then replacing less than \[	\xi
=
\Bigl\lfloor\dfrac{\delta^C (a^{1+}_t + a^{1-}_t)-M(s^2_t - \theta)}{1-s^2_t}\Bigr\rfloor\] causes a transition to absorbing state $\eta$.
\end{lemma}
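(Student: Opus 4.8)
The plan is to read the lemma as a statement about the threshold test in Equation \eqref{eq:4a}: the system moves to the absorbing state $\eta = 0 \in S^2$ precisely when the (unrounded) pre-transition value $g^2(s^1_t, s^2_t, a^1_t, a^2_t)$ falls strictly below $\theta$. So the whole task reduces to identifying, for fixed $s^2_t$, $a^1_t$, and $\theta$, the exact range of $a^2_t$ for which $g^2 < \theta$. As in the proof of Lemma~\ref{lema1}, I would drop the $round()$ operator at the outset and reinstate its effect only at the end, since it does not change the validity of the threshold comparison.

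First I would algebraically simplify $g^2$ from Equation \eqref{eq:4}. Collecting the recharge and discharge terms against the ``unchanged'' term $s^2_t(M - a^{1+}_t - a^{1-}_t - a^2_t)$, the $s^2_t a^1$ contributions cancel and the numerator collapses to a transparent form,
\[
g^2 = s^2_t - \frac{\delta^C (a^{1+}_t + a^{1-}_t)}{M} + \frac{a^2_t\,(1 - s^2_t)}{M}.
\]
This exposes the two competing mechanisms: recharging/discharging lowers the average capacity by $\delta^C(a^{1+}_t + a^{1-}_t)/M$, while each replaced battery raises it by $(1 - s^2_t)/M$. Imposing $g^2 < \theta$ and isolating $a^2_t$, I would use that $1 - s^2_t > 0$ whenever $s^2_t < 1$, so dividing preserves the inequality direction and yields
\[
a^2_t < \frac{\delta^C (a^{1+}_t + a^{1-}_t) - M(s^2_t - \theta)}{1 - s^2_t}.
\]

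Finally I would convert this real-valued bound into the integer statement of the lemma. Since $a^2_t$ is integer-valued and $\xi$ is defined as the floor of the right-hand side, we have $\xi \le \frac{\delta^C(a^{1+}_t + a^{1-}_t) - M(s^2_t - \theta)}{1 - s^2_t}$; hence any integer $a^2_t < \xi$ also satisfies the strict real inequality above, which forces $g^2 < \theta$ and, by Equation \eqref{eq:4a}, a transition to $\eta$. The strict ``less than $\xi$'' in the statement combines cleanly with the floor, so no boundary ambiguity arises (the delicate case $a^2_t = \xi$ with an integer-valued bound is excluded automatically).

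The step needing the most care is the reinstatement of $round()$ together with the integer flooring: I would need to argue that rounding $g^2$ to the nearest point of the discretized grid $S^2$ cannot lift a value lying strictly below $\theta$ back up to $\theta$ in a way that escapes the absorbing region, i.e.\ that the discretization is consistent with the threshold test, paralleling the ``$round()$ does not alter the validity of the arguments'' remark used in Lemma~\ref{lema1}. I would also note explicitly the sign hypothesis $1 - s^2_t > 0$ and dispatch the degenerate case $s^2_t = 1$, where replacement no longer affects $g^2$ and the bound becomes infinite, consistent with the behavior of $U$ in Theorem~\ref{Thm2}.
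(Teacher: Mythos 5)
Your proposal is correct and follows essentially the same route as the paper: both reduce the absorbing-state condition to $g^2 < \theta$ via the (unrounded) transition formula, isolate $a^2_t$ using $1 - s^2_t > 0$, and invoke integrality of $a^2_t$ to state the bound with a floor; your preliminary simplification of $g^2$ is just an algebraic rearrangement of the same expression the paper uses. Your added care about the direction of the floor inequality and the $s^2_t = 1$ degenerate case is slightly more rigorous than the paper's one-line "upper bound can be derived using the floor function," but it does not constitute a different argument.
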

  
\begin{proof}
Transition from state $s^2_t$ to absorbing state $\eta$ means $s^2_{t+1}  < \theta$. From Equation \eqref{Added1} we know that
\begin{equation*}
s^2_{t+1}= \frac{1}{M}\left[s^2_t ( M - a^2_t ) + a^2_t - \delta^C (a^{1+}_t + a^{1-}_t)\right].
\end{equation*}
Therefore, 
\begin{align*}
s^2_{t+1}=j^2 < \theta \Leftrightarrow  \\
\frac{1}{M}[s^2_t ( M - a^2_t ) + a^2_t - \delta^C (a^{1+}_t + a^{1-}_t)] < \theta \Leftrightarrow \\
s^2_t ( M - a^2_t ) + a^2_t - \delta^C (a^{1+}_t + a^{1-}_t) < M\theta \Leftrightarrow \\
(1- s^2_t ) a^2_t < M\theta + \delta^C (a^{1+}_t + a^{1-}_t) - Ms^2_t \Leftrightarrow \\
 a^2_t < \frac{M\theta + \delta^C (a^{1+}_t + a^{1-}_t) - Ms^2_t}{1- s^2_t } \Leftrightarrow \\
  a^2_t < \frac{  \delta^C (a^{1+}_t + a^{1-}_t) - M(s^2_t - \theta)}{1- s^2_t }. 
 \end{align*} 
Because $a^2_t$ is an integer, the upper bound can be derived using the floor function. 
\[a^2_t < 	\xi
=
\Bigl\lfloor\dfrac{\delta^C (a^{1+}_t + a^{1-}_t) - M(s^2_t - \theta)}{1-s^2_t}\Bigr\rfloor. \]

\end{proof}

In this Appendix, when we use the transition probability and reward functions with only the second dimension of the state and action as arguments, we assume that the first dimension of the system is fixed (i.e., $s^1_t$ and $a^1_t$ are constants). 

\begin{lemma}\label{lema3}
For the stochastic SAIRP, $q_t(k \mid s^2_t, a^2_t)$ is a deterministic function equal to 
\begin{equation*}
q_t(k \mid s^2_t, a^2_t) =\left\{
	\begin{array}{ll}
	0 & \text{if} \quad j^2 < k, \\
		p_t(j^2 \mid s^2_t, a^2_t) =1 & \text{if} \quad j^2 \ge k . \\
	\end{array}
\right.	
\end{equation*}
\end{lemma}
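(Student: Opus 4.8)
The plan is to exploit the fact that the second dimension of the state transitions \emph{deterministically}. The only stochastic element of the SAIRP is the demand $D_t$, which enters the transition probability \eqref{eq:TrP} solely through the first coordinate $j^1$ (the number of swapped batteries). The second coordinate is pinned down by Equation \eqref{eq:4a}: for fixed $s^1_t$, $a^1_t$, $s^2_t$, and $a^2_t$, the value $j^2 = f^2(s_t^1, s^2_t, a_t^1, a_t^2)$ is a single, fully determined number that does not depend on the realization of $D_t$. This is the structural fact that drives the whole argument.

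First I would recall the definition of the marginal tail function for the second dimension, $q_t(k \mid s^2_t, a^2_t) = \sum_{j^2 = k}^{\infty} p_t(j^2 \mid s^2_t, a^2_t)$, where the marginal $p_t(j^2 \mid s^2_t, a^2_t)$ is obtained by summing the joint transition probability \eqref{eq:TrP} over all admissible $j^1$. Because every nonzero entry of \eqref{eq:TrP} carries the \emph{same} second coordinate $f^2(s_t^1, s^2_t, a_t^1, a_t^2)$, this marginalization collapses to a point mass: the total probability of $1$ is concentrated at the single value $j^2 = f^2(\cdots)$. Formally, $p_t(j^2 \mid s^2_t, a^2_t) = 1$ when $j^2 = f^2(\cdots)$ and $0$ otherwise.

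Then I would substitute this degenerate distribution into the tail sum. The summation over $\{k, k+1, \dots\}$ captures the single unit of mass precisely when $k \le f^2(\cdots) = j^2$, yielding $q_t(k \mid s^2_t, a^2_t) = 1$; when $k > j^2$ the point mass lies below the summation range and the tail equals $0$. This reproduces exactly the two-case formula in the statement.

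I do not expect any serious obstacle, since the result is a direct consequence of the determinism of \eqref{eq:4a}. The only point requiring care is justifying that marginalizing the joint probability \eqref{eq:TrP} over $j^1$ leaves all probability on a single $j^2$; this follows because the admissible $j^1$ values are exhausted by the two cases of \eqref{eq:TrP}, whose probabilities $p_{(\cdot)}$ and $q_{(\cdot)}$ together sum to $1$ over the demand distribution, and each case shares the common second coordinate $f^2(\cdots)$.
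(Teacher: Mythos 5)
Your proposal is correct and follows essentially the same route as the paper: both arguments rest on the observation that Equation \eqref{eq:4a} makes the second-coordinate transition deterministic (independent of $D_t$), so the conditional distribution of $j^2$ is a point mass and the tail sum $q_t(k \mid s^2_t, a^2_t)$ reduces to the indicator of $k \le j^2$. Your explicit marginalization of \eqref{eq:TrP} over $j^1$ is a slightly more careful justification of the point-mass claim than the paper gives, but it is the same underlying argument.
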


\begin{proof}
As defined in Section \ref{ProblemStatement}, 
\begin{align*}
q_t(k \mid s, a) = \sum_{j=k}^{\infty} p_t(j \mid s, a).
\end{align*}

Hence, when the first dimension of the system is fixed, we have 
\begin{align*}
q_t(k \mid s^2_t, a^2_t) = \sum_{j^2=k}^{\infty} p_t(j^2 \mid s^2_t, a^2_t).
\end{align*}

From the second state transition function given by Equation \eqref{eq:4a}, the value of second state in the future, $s^2_{t+1}= j^2$, does not depend on $D_t$.  
From Lemma~\ref{lema2}, if we start from $s^2_t$ and take action $a^2_t$, the capacity will transition to the absorbing state $j^2 = \eta$, if \[a^2_t< \Bigl\lfloor{	\xi
}\Bigr\rfloor \] or it will transition to state $j^2 \ge \theta$, if \[a^2_t \ge \Bigl\lfloor{	\xi}\Bigr\rfloor.\]
Thus, this is a deterministic transition with into two intervals, $\big[\eta,j^2\big]$ and $(j^2, 1\big]$.  If $k \in (j^2, 1\big]$, then \mbox{$q_t(k \mid s^2_t, a^2_t)$} equals zero. 
However, if $k \in \big[\eta,j^2\big]$, then $k \leq j^2$ and $q_t(k \mid s^2_t, a^2_t)$ equals 1.

\end{proof}

We use the notation we define in Definition~\ref{Def1} in Lemma~\ref{lema4}, Lemma~\ref{Thm1}, and Theorem~\ref{Thm2}.

\begin{definition}{}\label{Def1}
The second dimension of the state transitions to the following state when starting from $s^2_t$ and taking action $a^2_t$.
\begin{equation}\label{DFA1}
j^2_A =  \frac{1}{M}\left[s^2_t ( M - a^2_t ) + a^2_t - \delta^C (a^{1+}_t + a^{1-}_t)\right].
\end{equation}
The second dimension of the state transitions to the following state when starting from $\widetilde {s^2_t}$ and taking action $a^2_t$.
\begin{equation}\label{DF1B}
j^2_B =\frac{1}{M}\left[\widetilde{s^2_t} ( M - a^2_t ) + a^2_t - \delta^C (a^{1+}_t + a^{1-}_t)\right]. 
 \end{equation}
 The second dimension of the state transitions to the following state when starting from $s^2_t$ and taking action $\widetilde {a^2_t}$.
 \begin{equation}\label{DF1C}
j^2_C=  \frac{1}{M}\left[s^2_t ( M - \widetilde{a^2_t} ) + \widetilde{a^2_t} - \delta^C (a^{1+}_t + a^{1-}_t)\right].
 \end{equation}
The second dimension of the state transitions to the following state when starting from $\widetilde {s^2_t}$ and taking action $ \widetilde a^2_t$.
\begin{equation}\label{DF1D}
 j^2_D = \frac{1}{M}\left[\widetilde{s^2_t }( M - \widetilde{a^2_t }) + \widetilde{a^2_t} - \delta^C (a^{1+}_t + a^{1-}_t)\right].
 \end{equation}\\
 \end{definition} 
 
 \begin{lemma}\label{lema4}
If $ s^2_t  \ge \widetilde{s^2 _t} $ and $ a^2_t  \ge \widetilde{a^2 _t}$, then $j^2_A \ge \max(j^2_B, j^2_C) \ge \min(j^2_B, j^2_C) \ge j^2_D$ .
  \end{lemma}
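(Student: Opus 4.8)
The plan is to reduce the entire four-way chain to the separate monotonicity of a single bilinear expression in each of its two arguments. Observe first that in the four transition values \eqref{DFA1}--\eqref{DF1D} the leading factor $1/M$ and the degradation term $\delta^C(a^{1+}_t + a^{1-}_t)$ are \emph{identical}, since $a^1_t$ (hence $a^{1+}_t$ and $a^{1-}_t$) is held fixed throughout. Therefore these common pieces cancel from every pairwise comparison and cannot affect any inequality. What remains to compare in each case is the bilinear form $\phi(s,a) = s(M-a) + a = sM + a(1-s)$, with $j^2_A,j^2_B,j^2_C,j^2_D$ corresponding to $\phi(s^2_t,a^2_t)$, $\phi(\widetilde{s^2_t},a^2_t)$, $\phi(s^2_t,\widetilde{a^2_t})$, and $\phi(\widetilde{s^2_t},\widetilde{a^2_t})$, respectively. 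Thus the whole statement follows once I show that $\phi$ is nondecreasing in each variable separately over the feasible region.

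Next I would establish the four required pairwise inequalities. For the two that hold the replacement action fixed and raise the capacity, namely $j^2_A \ge j^2_B$ and $j^2_C \ge j^2_D$, I would simply invoke Lemma~\ref{lema1}, which already proves monotonicity of the second-state transition in $s^2_t$ for fixed action (and notes that $round(\cdot)$ preserves the ordering). For the remaining two, which hold the capacity fixed and raise the replacement action, I would compute the single-variable difference directly: $\phi(s,a^2_t) - \phi(s,\widetilde{a^2_t}) = (a^2_t - \widetilde{a^2_t})(1-s)$, which is nonnegative because $a^2_t \ge \widetilde{a^2_t}$ by hypothesis and every capacity value satisfies $s \in S^2 \subseteq [0,1]$, so $1-s \ge 0$. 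Taking $s = s^2_t$ gives $j^2_A \ge j^2_C$ and taking $s = \widetilde{s^2_t}$ gives $j^2_B \ge j^2_D$. I would also record explicitly the feasibility facts that make Lemma~\ref{lema1} applicable here, namely $a^2_t \in \{0,\dots,M-s^1_t\}$, so that $M - a^2_t \ge 0$.

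Finally I would assemble the chain. From $j^2_A \ge j^2_B$ and $j^2_A \ge j^2_C$ I conclude $j^2_A \ge \max(j^2_B, j^2_C)$; from $j^2_B \ge j^2_D$ and $j^2_C \ge j^2_D$ I conclude $\min(j^2_B, j^2_C) \ge j^2_D$; and $\max(j^2_B, j^2_C) \ge \min(j^2_B, j^2_C)$ holds trivially. Chaining these three gives $j^2_A \ge \max(j^2_B, j^2_C) \ge \min(j^2_B, j^2_C) \ge j^2_D$, as claimed. The argument is essentially a supermodularity/separate-monotonicity bookkeeping exercise, so I do not anticipate a substantive obstacle; the only care needed is tracking the feasibility bounds $a^2_t \le M$ and $s^2_t,\widetilde{s^2_t}\le 1$ that guarantee each single-variable difference is nonnegative, together with the observation (already made in Lemma~\ref{lema1}) that applying $round(\cdot)$ to monotone inputs yields monotone outputs, so that working with the un-rounded expression in Definition~\ref{Def1} loses no generality.
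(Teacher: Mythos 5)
Your proof is correct. It establishes exactly the four pairwise inequalities needed ($j^2_A \ge j^2_B$, $j^2_A \ge j^2_C$, $j^2_B \ge j^2_D$, $j^2_C \ge j^2_D$) and assembles the chain correctly; the algebraic content is identical to the paper's, since the paper's case reductions also bottom out in the two facts $(s^2_t - \widetilde{s^2_t})(M - a) \ge 0$ and $(a^2_t - \widetilde{a^2_t})(1 - s) \ge 0$. The difference is purely organizational, and in your favor: the paper proves five inequalities by separate start-from-the-claim-and-reduce manipulations, including a standalone direct proof of $j^2_A \ge j^2_D$ (its case i) that is redundant given transitivity, and it reproves $j^2_A \ge j^2_B$ from scratch rather than citing Lemma~\ref{lema1} as you do. By factoring everything through the separate monotonicity of the single bilinear form $\phi(s,a) = sM + a(1-s)$ after cancelling the common $1/M$ and $\delta^C(a^{1+}_t + a^{1-}_t)$ terms, you compress five computations into two one-line differences and make the supermodularity-style structure of the lemma visible. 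Your closing remarks on feasibility ($a^2_t \le M$, $s \le 1$) and on $round(\cdot)$ preserving order are exactly the hypotheses the paper also relies on (it drops the rounding ``without loss of generality'' in Lemma~\ref{lema1} for the same reason), so nothing is missing.
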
 
  \begin{proof} 
Based on Definition \ref{Def1}, we need to show $j^2_A$ has the greatest value and $j^2_D$ has the lowest value. Thus, as follows we prove $j^2_A \ge j^2_B$, $j^2_A \ge j^2_C$, $j^2_A \ge  j^2_D$, $j^2_B \ge j^2_D$, and $j^2_C \ge j^2_D$.  We proceed with a proof by cases starting with the claim and reducing it to an always true statement.
 \begin{enumerate}[i.]
 \item $j^2_A \ge j^2_D$
 $$ j^2_A \ge j^2_D \Leftrightarrow$$
 $$ \frac{1}{M}\left[s^2_t ( M - a^2_t ) + a^2_t - \delta^C (a^{1+}_t + a^{1-}_t)\right] \ge  \frac{1}{M}\left[\widetilde{s^2_t }( M - \widetilde{a^2_t }) + \widetilde{a^2_t} - \delta^C (a^{1+}_t + a^{1-}_t)\right]  \Leftrightarrow$$
 $$ s^2_t ( M - a^2_t ) + a^2_t - \delta^C (a^{1+}_t + a^{1-}_t) \ge \widetilde{s^2_t }( M - \widetilde{a^2_t }) + \widetilde{a^2_t} - \delta^C (a^{1+}_t + a^{1-}_t)  \Leftrightarrow$$
 $$ Ms^2_t - s^2_ta^2_t + a^2_t - M \widetilde{s^2_t } + \widetilde{s^2_t } \widetilde{a^2_t } -\widetilde{a^2_t}  \ge 0  \Leftrightarrow$$
  $$ M(s^2_t  - \widetilde{s^2_t }) + a^2_t (1-s^2_t ) -\widetilde{a^2_t}(1-\widetilde{s^2_t })  \ge 0.$$
  We know $a^2_t (1-{s^2_t }) \ge  \widetilde{a^2_t }(1- {s^2_t })$, so it suffices to show that
  $$ M(s^2_t  - \widetilde{s^2_t }) + \widetilde {a^2_t }(1-  {s^2_t} ) -\widetilde{a^2_t}(1-\widetilde{s^2_t })  \ge 0  \Leftrightarrow$$
   $$ M(s^2_t  - \widetilde{s^2_t }) + \widetilde {a^2_t }(\widetilde{s^2_t } -  {s^2_t} )  \ge 0  \Leftrightarrow$$
   $$ (M- \widetilde {a^2_t }) (s^2_t  - \widetilde{s^2_t }) \ge 0.$$
   Because $ M \ge \widetilde {a^2_t } \ \text{and} \ s^2_t \ge \widetilde{s^2_t }$ the last statement is always non-negative and we prove our claim that $j^2_A \ge j^2_D$.

\item $j^2_A \ge  j^2_B$
 $$j^2_A \ge  j^2_B \Leftrightarrow$$
 $$ \frac{1}{M}\left[s^2_t ( M - a^2_t ) + a^2_t - \delta^C (a^{1+}_t + a^{1-}_t)\right] \ge  \frac{1}{M}\left[\widetilde{s^2_t} ( M - a^2_t ) + a^2_t - \delta^C (a^{1+}_t + a^{1-}_t)\right]  \Leftrightarrow$$
  $$ s^2_t ( M - a^2_t ) + a^2_t - \delta^C (a^{1+}_t + a^{1-}_t) \ge  \widetilde{s^2_t} ( M - a^2_t ) + a^2_t - \delta^C (a^{1+}_t + a^{1-}_t)  \Leftrightarrow$$
    $$ s^2_t ( M - a^2_t )  \ge  \widetilde{s^2_t} ( M - a^2_t ).$$
Because $ M \ge \widetilde {a^2_t }$ we have
 $$ s^2_t   \ge  \widetilde{s^2_t}.$$
The last statement is always true and we prove our claim that $j^2_A \ge j^2_B$. 
 \item $j^2_A \ge  j^2_C$ 
 $$ j^2_A \ge j^2_C \Leftrightarrow$$
 $$ \frac{1}{M}\left[s^2_t ( M - a^2_t ) + a^2_t - \delta^C (a^{1+}_t + a^{1-}_t)\right] \ge  \frac{1}{M}\left[s^2_t ( M - \widetilde{a^2_t} ) + \widetilde{a^2_t} - \delta^C (a^{1+}_t + a^{1-}_t)\right] \Leftrightarrow$$
  $$ s^2_t ( M - a^2_t ) + a^2_t - \delta^C (a^{1+}_t + a^{1-}_t) \ge  s^2_t ( M - \widetilde{a^2_t} ) + \widetilde{a^2_t} - \delta^C (a^{1+}_t + a^{1-}_t) \Leftrightarrow$$
  $$ s^2_t ( M - a^2_t ) + a^2_t  \ge  s^2_t ( M - \widetilde{a^2_t} ) + \widetilde{a^2_t}  \Leftrightarrow$$
  $$ s^2_t ( M - a^2_t - M+  \widetilde{a^2_t} ) + a^2_t  - \widetilde{a^2_t} \ge  0  \Leftrightarrow$$
    $$ s^2_t (   \widetilde{a^2_t} - a^2_t  ) + a^2_t  - \widetilde{a^2_t} \ge  0  \Leftrightarrow$$
        $$ (1 - s^2_t )( a^2_t  - \widetilde{a^2_t}) \ge  0 .$$
   Because $ (1 - s^2_t ) \ge 0 \ \text{and} \ (a^2_t - \widetilde {a^2_t }) \ge 0 $ the last statement is always non-negative and we prove our claim that $j^2_A \ge j^2_C$. \\
   So far we know $j^2_A \ge \max(j^2_B, j^2_C) \ge \min(j^2_B, j^2_C)$ and $j^2_A \ge j^2_D$. Now, we show $\min(j^2_B, j^2_C) \ge j^2_D$.  It suffices to show $ j^2_B \ge j^2_D$ and $j^2_C \ge j^2_D$. First, we show $ j^2_B \ge j^2_D$.
   \item $ j^2_B \ge j^2_D$
    $$j^2_B \ge j^2_D \Leftrightarrow$$
    $$ \frac{1}{M}\left[\widetilde{s^2_t} ( M - a^2_t ) + a^2_t - \delta^C (a^{1+}_t + a^{1-}_t)\right]  \ge  \frac{1}{M}\left[\widetilde{s^2_t }( M - \widetilde{a^2_t }) + \widetilde{a^2_t} - \delta^C (a^{1+}_t + a^{1-}_t)\right] \Leftrightarrow$$
        $$ \widetilde{s^2_t} ( M - a^2_t ) + a^2_t - \delta^C (a^{1+}_t + a^{1-}_t)  \ge  \widetilde{s^2_t }( M - \widetilde{a^2_t }) + \widetilde{a^2_t} - \delta^C (a^{1+}_t + a^{1-}_t) \Leftrightarrow$$
    $$ \widetilde{s^2_t} ( M - a^2_t ) + a^2_t \ge \widetilde{s^2_t }( M - \widetilde{a^2_t }) + \widetilde{a^2_t} \Leftrightarrow$$
    $$ \widetilde{s^2_t} ( \widetilde{a^2_t } - a^2_t ) + (a^2_t - \widetilde{a^2_t})  \ge 0 \Leftrightarrow$$
    $$ (1-\widetilde{s^2_t})( a^2_t - \widetilde{a^2_t } )  \ge 0.$$
   Because $ (1 - \widetilde{s^2_t }) \ge 0 \ \text{and} \ (a^2_t - \widetilde {a^2_t }) \ge 0 $ the last statement is always non-negative and we prove our claim that $j^2_B \ge j^2_D$. 

\item $j^2_C \ge  j^2_D$
 $$j^2_C \ge j^2_D \Leftrightarrow$$
    $$\frac{1}{M}\left[s^2_t ( M - \widetilde{a^2_t} ) + \widetilde{a^2_t} - \delta^C (a^{1+}_t + a^{1-}_t)\right]   \ge \frac{1}{M}\left[\widetilde{s^2_t }( M - \widetilde{a^2_t }) + \widetilde{a^2_t} - \delta^C (a^{1+}_t + a^{1-}_t)\right]  \Leftrightarrow$$
        $$s^2_t ( M - \widetilde{a^2_t} ) + \widetilde{a^2_t} - \delta^C (a^{1+}_t + a^{1-}_t)   \ge \widetilde{s^2_t }( M - \widetilde{a^2_t }) + \widetilde{a^2_t} - \delta^C (a^{1+}_t + a^{1-}_t)  \Leftrightarrow$$
    $$ {s^2_t }( M - \widetilde{a^2_t }) + \widetilde{a^2_t} \ge \widetilde{s^2_t }( M - \widetilde{a^2_t }) + \widetilde{a^2_t} \Leftrightarrow$$
    $$ {s^2_t }( M - \widetilde{a^2_t })  \ge \widetilde{s^2_t }( M - \widetilde{a^2_t })   \Leftrightarrow$$
    $$  {s^2_t } \ge \widetilde{s^2_t }. $$
    
The last statement is always true and we prove our claim that $j^2_C \ge j^2_D$. From iv and v,
 we conclude that $\min(j^2_B, j^2_C)  \ge j^2_D$.  From i, ii, iii, iv, and v, we prove $j^2_A \ge \max(j^2_B, j^2_C) \ge \min(j^2_B, j^2_C) \ge j^2_D$.
\end{enumerate}

\end{proof}

Using Lemmas~\ref{lema1}, \ref{lema2}, \ref{lema3}, and \ref{lema4}, we prove in Lemma~\ref{Thm1} that  \textcolor{black}{the stochastic SAIRPs violate the sufficient conditions for the optimality of a monotone policy in the second dimension of the state}.   Then, in Theorem~\ref{Thm2}, we prove the special conditions under which a monotone policy is optimal in the second dimension of the \textcolor{black}{state}. 

  
\setcounter{theorem}{0}
\setcounter{lemma}{0}
  

\begin{lemma}\label{Thm1} \textcolor{black}{The stochastic SAIRPs violate the sufficient conditions for the optimality of a monotone policy in the second dimension of the state.}
 \end{lemma} 
\begin{proof}
First, we need to show that at least one of the following conditions is not always satisfied for our problem. 
We will prove that condition 4 is not satisfied; That is, $q_t(k \mid s^2_t , a^2_t)$  \textbf{is not} subadditive on $S^2 \times A'$ for every $k \in S^2$ where $A'$ is the set of possible actions in the second dimension, independent of the state of the system.

 \begin{enumerate}

\item $r_t (s^2_t, a^2_t )$ is non-decreasing in $s^2_t$ for all $a^2_t \in A'$. \\
If  $ s^2_t  \ge \widetilde{s^2 _t} $, it suffices to show that  $r_t (s^2_t, a^2_t ) \ge r^2_t (\widetilde{s^2_t}, a^2_t )$. From Equations \eqref{eq:imRew} and \eqref{eq:rho} we know
$$r_t (s^2_t, a^2_t ) = \frac{\beta (1+ s^2_t - 2\theta)} { 1- \theta} \left[\min\{D_t , s^1_t - a ^{1-}_t\}\right] - K_t a^{1+}_t + J_t a^{1-}_t - L_t a^2_t.$$ 
We start with our claim, $r_t (s^2_t, a^2_t ) \ge r_t (\widetilde{s^2_t}, a^2_t )$, and reduce it to an always true statement.
\begin{center}
$r_t (s^2_t, a^2_t ) \ge r_t (\widetilde{s^2_t}, a^2_t )  \Leftrightarrow $ \\
$ \frac{\beta (1+ s^2_t - 2\theta)} { 1- \theta} \left[\min\{D_t, s^1_t - a ^{1-}_t\}\right] - K_t a^{1+}_t + J_t a^{1-}_t - L_t a^2_t \ge \frac{\beta (1+ \widetilde{s^2_t} - 2\theta)} { 1- \theta} \left[\min\{D_t , s^1_t - a ^{1-}_t\}\right] - K_t a^{1+}_t + J_t a^{1-}_t - L_t a^2_t  \Leftrightarrow $ \\
$\frac{\beta (1+ s^2_t - 2\theta)} { 1- \theta} \ge \frac{\beta (1+ \widetilde{s^2_t} - 2\theta)}{ 1- \theta}   \Leftrightarrow $ \\
$ s^2_t  \ge \widetilde{s^2 _t}. $
\end{center}

\item $q_t(k \mid s^2_t , a^2_t)$ is non-decreasing in $s^2_t$ for all $k \in S^2$ and $a \in A'$. \\
If $ s^2_t  \ge \widetilde{s^2 _t} $, we seek to show $$q_t(k \mid s^2_t , a^2_t) \ge q_t(k \mid \widetilde{s^2_t} , a^2_t).$$


It is sufficient to show that if $j^2_A \ge j^2_B$ from Definition \ref{Def1}, then $q_t(k \mid s^2_t , a^2_t) \ge q_t(k \mid \widetilde{s^2_t} , a^2_t)$. We show $j^2_A \ge j^2_B$ in Lemma~\ref{lema1} and prove our claim.
\item $r_t (s^2_t, a^2_t )$ is a subadditive function on $S^2 \times A'$. \\
 For $r_t (s^2_t, a^2_t )$ to be subadditive, it means the incremental effect on the expected total reward of replacing less batteries is less when the average capacity is greater. 
Consider, $ s^2_t  \ge \widetilde{s^2 _t}$  and $ a^2_t  \ge \widetilde{a^2 _t}$. It suffices to show that
$$r_t (s^2_t, a^2_t ) + r_t (\widetilde {s^2_t}, \widetilde{a^2_t} ) \le  r_t (s^2_t, \widetilde {a^2_t} ) + r_t (\widetilde {s^2_t}, {a^2_t} ) \Leftrightarrow $$ 
$$  \sum_{j=0}^{\infty} P (D_t = j ) \frac{\beta (1+ s^2_t - 2\theta)} { 1- \theta} \left[\min\{j , s^1_t - a ^{1-}_t\}\right] - K_t a^{1+}_t + J_t a^{1-}_t - L_t a^2_t$$   
$$+ \sum_{j=0}^{\infty} P (D_t = j ) \frac{\beta (1+ \widetilde {s^2_t} - 2\theta)} { 1- \theta} \left[\min\{j , s^1_t - a ^{1-}_t\}\right] - K_t a^{1+}_t + J_t a^{1-}_t - L_t \widetilde{a^2_t} \le $$ 
$$ \sum_{j=0}^{\infty} P (D_t = j ) \frac{\beta (1+ {s^2_t} - 2\theta)} { 1- \theta} \left[\min\{j , s^1_t - a ^{1-}_t\}\right] - K_t a^{1+}_t + J_t a^{1-}_t - L_t \widetilde{a^2_t}  $$
$$+ \sum_{j=0}^{\infty} P (D_t = j ) \frac{\beta (1+ \widetilde {s^2_t} - 2\theta)} { 1- \theta} \left[\min\{j , s^1_t - a ^{1-}_t\}\right] - K_t a^{1+}_t + J_t a^{1-}_t - L_t {a^2_t}.$$ 
As the right-hand side and the left-hand side of the inequality are the same, we prove our claim.

\item $q_t(k \mid s^2_t , a^2_t)$  is subadditive on $S^2 \times A'$ for every $k \in S^2$. 
It suffices to show that $$q_t(k \mid s^2_t , a^2_t) + q_t(k \mid \widetilde{s^2 _t} , \widetilde{a^2 _t}) \le q_t(k \mid \widetilde{s^2 _t} , a^2_t) + q_t(k \mid s^2_t , \widetilde{a^2 _t}),$$  for every $k \in S^2$ if $s^2_t \ge  \widetilde{s^2 _t}$ and $a^2_t \ge  \widetilde{a^2 _t}$.

From Lemma~\ref{lema3},  $$q_t(k \mid s^2_t, a^2_t) =\left\{
	\begin{array}{ll}
	0 & \text{if} \quad j^2 < k, \\
		p_t(j^2 \mid s^2_t, a^2_t) =1 & \text{if} \quad j^2 \ge k . \\
		
	\end{array}
\right.	
$$

From Lemmas~\ref{lema2} and \ref{lema3}, the starting state  $s^2_t$ transfers to $j^2_A$ or $j^2_C$, if we take action $a^2_t$ or $\widetilde{a^2_t}$, respectively. If the number of batteries replaced is greater than a threshold $\xi
$, then the future state is at least the capacity threshold, $\theta$. Otherwise, we will transfer to the absorbing state, $\eta$. Similarly, the starting state  $\widetilde{s^2_t}$ transfers to $j^2_B$ or $j^2_D$, if we take action $a^2_t$ or $\widetilde{a^2_t}$, respectively. If the number of batteries replaced is greater than a threshold $$\widetilde{	\xi}=\Bigl\lfloor\dfrac{\delta^C (a^{1+}_t + a^{1-}_t) - M(\widetilde{s^2_t} - \theta)}{1-\widetilde{s^2_t} }\Bigr\rfloor
,$$ then the future state is at least $\theta$. Otherwise, we will transfer to the absorbing state, $\eta$.
 \\
Recall in Lemma~\ref{lema4} and Definition \ref{Def1} we have:
$$j^2_A =  \frac{1}{M}\left[s^2_t ( M - a^2_t ) + a^2_t - \delta^C (a^{1+}_t + a^{1-}_t)\right],$$
 $$j^2_B =\frac{1}{M}\left[\widetilde{s^2_t} ( M - a^2_t ) + a^2_t - \delta^C (a^{1+}_t + a^{1-}_t)\right],$$ 
$$j^2_C=  \frac{1}{M}\left[s^2_t ( M - \widetilde{a^2_t} ) + \widetilde{a^2_t} - \delta^C (a^{1+}_t + a^{1-}_t)\right],$$ 
 $$j^2_D = \frac{1}{M}\left[\widetilde{s^2_t }( M - \widetilde{a^2_t }) + \widetilde{a^2_t} - \delta^C (a^{1+}_t + a^{1-}_t)\right],$$ and 
 $$j^2_A \ge \max(j^2_B, j^2_C) \ge \min(j^2_B, j^2_C) \ge j^2_D.$$

To show the subadditivity property of the $q_t(k \mid s^2_t, a^2_t)$ for every $k \in S^2$, we divide $S^2$ space into five intervals. Then, we show that if $ k \in \left(\max(j^2_B, j^2_C), j^2_A\right]$ the subadditivity condition is not satisfied.
 \begin{enumerate}[i.]
\item $ 0 \le k \le  j^2_D$ 
$$q_t(k \mid s^2_t , a^2_t) + q_t(k \mid \widetilde{s^2 _t} , \widetilde{a^2 _t}) \le q_t(k \mid \widetilde{s^2 _t} , a^2_t) + q_t(k \mid s^2_t , \widetilde{a^2 _t}) \Leftrightarrow$$
$$ 1+1 \le 1+1 .$$

\item $ j^2_D < k \le  \min(j^2_B, j^2_C)$ 
$$q_t(k \mid s^2_t , a^2_t) + q_t(k \mid \widetilde{s^2 _t} , \widetilde{a^2 _t}) \le q_t(k \mid \widetilde{s^2 _t} , a^2_t) + q_t(k \mid s^2_t , \widetilde{a^2 _t}) \Leftrightarrow$$
$$ 1+0 \le 1+1 .$$

\item $ \min(j^2_B, j^2_C) < k \le  \max(j^2_B, j^2_C)$ 
 $$q_t(k \mid s^2_t , a^2_t) + q_t(k \mid \widetilde{s^2 _t} , \widetilde{a^2 _t}) \le q_t(k \mid \widetilde{s^2 _t} , a^2_t) + q_t(k \mid s^2_t , \widetilde{a^2 _t}) \Leftrightarrow$$
$$ 1+0 \le 1+0 .$$

\item $ \max(j^2_B, j^2_C) < k \le  j^2_A$ 
 $$q_t(k \mid s^2_t , a^2_t) + q_t(k \mid \widetilde{s^2 _t} , \widetilde{a^2 _t}) \le q_t(k \mid \widetilde{s^2 _t} , a^2_t) + q_t(k \mid s^2_t , \widetilde{a^2 _t}) \Leftrightarrow$$
$$ 1+0 \nleq 0+0 .$$
\item $ j^2_A < k   $ 
 $$q_t(k \mid s^2_t , a^2_t) + q_t(k \mid \widetilde{s^2 _t} , \widetilde{a^2 _t}) \le q_t(k \mid \widetilde{s^2 _t} , a^2_t) + q_t(k \mid s^2_t , \widetilde{a^2 _t}) \Leftrightarrow$$
$$ 0+0 \le 0+0 .$$
\end{enumerate}
Due to the result of part iv, we \textbf{can not} conclude that $q_t(k \mid s^2_t , a^2_t)$ is subadditive on $S^2 \times A'$ for every $k \in S^2$.  

\item $r_N (s_N)$ is non-decreasing in $s^2_N$.\\
Consider  $ s^2_N  \ge \widetilde{s^2 _N}$, it suffices to show that $r_N(s_N) \ge r_N (\widetilde{s_N})$ where: 
\begin{equation}\label{eq:12}
r_N (s_N)=
\left\{
	\begin{array}{ll}
		\rho_{s_N^2} s^1_N  & \text{ if} \ \ \ s^2_N \geq \theta, \\
		0 & \text{ otherwise}.
	\end{array}
\right.	
\end{equation}
We examine the three following cases.

 \begin{enumerate}[i.]
\item  $s^2_N \ge \widetilde {s^2_N} \ge \theta$ 
$$ r_N (s_N) \ge r_N (\widetilde{s_N}) \Leftrightarrow $$
$$ \rho_{s^2_{N}} s^1_N \ge \rho_{\widetilde{s^2_{N}}} s^1_N \Leftrightarrow$$
$$   \frac{\beta (1+ s^2_N - 2\theta)} {1- \theta} s^1_N
 \ge     \frac{\beta (1+ \widetilde {s^2_N} - 2\theta)} {1- \theta} s^1_N \Leftrightarrow$$
$$ s^2_N  \ge \widetilde{s^2 _N}. $$

\item  $s^2_N \ge \theta > \widetilde {s^2_N} $ 
$$ r_N (s_N) \ge r_N (\widetilde{s_N}) \Leftrightarrow $$
$$ \rho_{s^2_{N}} s^1_N \ge 0 \Leftrightarrow$$
$$   \frac{\beta (1+ s^2_N - 2\theta)} {1- \theta} s^1_N
 \ge  0 \Leftrightarrow$$
 $$   \frac{\beta (1+ s^2_N - 2\theta)} {1- \theta} s^1_N
 \ge  0\Leftrightarrow$$
$$ 1+ s^2_N -2 \theta  \ge 0. $$
Because $1+ s^2_N -2\theta  \ge 1+\theta -2\theta = 1- \theta$ and $1-\theta \ge 0$ we can conclude that: $$ 1+ s^2_N -2 \theta  \ge 0. $$

\item  $ \theta > s^2_N \ge \widetilde {s^2_N} $ 
$$ r_N (s_N) \ge r_N (\widetilde{s_N}) \Leftrightarrow $$
$$ 0 \ge 0.$$
\end{enumerate}
\end{enumerate}

We show that we \textbf{can not} prove the subadditivity condition and in turn,  \textcolor{black}{we proved that the stochastic SAIRPs violate the sufficient conditions for the optimality of a monotone policy in the second dimension of the state. }


\end{proof}

\begin{theorem}\label{Thm2} There exist optimal decision rules $d^*_t : S \rightarrow A_{s_t}$ for the stochastic SAIRP which are monotone non-increasing in the second dimension of the \textcolor{black}{state} for \mbox{$t= 1, \dots, N-1$} if there is an upper-bound $U$ on the number of batteries replaced \textcolor{black}{at} each decision epoch where $U=\frac{M\varepsilon}{2(1-s^2_t)}$, when $M$ is the number of batteries at the swap station and $\varepsilon$ is the discretized increment in capacity.  \end{theorem}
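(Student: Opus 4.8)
The plan is to build directly on the proof of Lemma~\ref{Thm1}. There we verified that conditions 1, 2, 3, and 5 of the sufficient conditions for a monotone non-increasing optimal policy always hold for the stochastic SAIRP, and that the \emph{only} obstruction is the subadditivity of $q_t(k \mid s^2_t, a^2_t)$ (condition 4). Moreover, the case analysis in that proof localized the failure precisely: subadditivity breaks down only in case iv, i.e.\ when there exists a discretized capacity state $k \in S^2$ with $\max(j^2_B, j^2_C) < k \le j^2_A$, where $j^2_A, j^2_B, j^2_C, j^2_D$ are the four transition targets of Definition~\ref{Def1}. Thus the whole theorem reduces to showing that the replacement bound $U$ forces this problematic interval to contain no grid point, after which all five conditions hold and the standard monotone-policy theorem yields the desired decision rules.

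First I would impose the constraint $a^2_t \le U = \frac{M\varepsilon}{2(1-s^2_t)}$ on the feasible replacement actions and, taking $s^2_t \ge \widetilde{s^2_t}$ and $a^2_t \ge \widetilde{a^2_t}$ as in Lemma~\ref{lema4}, bound the gap between the two largest transition targets. Using Definition~\ref{Def1}, a direct subtraction gives $j^2_A - j^2_C = \frac{1}{M}(1-s^2_t)(a^2_t - \widetilde{a^2_t})$. Since $\max(j^2_B, j^2_C) \ge j^2_C$, we have $j^2_A - \max(j^2_B, j^2_C) \le j^2_A - j^2_C$, and because both $a^2_t$ and $\widetilde{a^2_t}$ lie below $U$ we get $a^2_t - \widetilde{a^2_t} \le U$, whence
\begin{equation*}
j^2_A - \max(j^2_B, j^2_C) \;\le\; j^2_A - j^2_C \;=\; \frac{(1-s^2_t)(a^2_t - \widetilde{a^2_t})}{M} \;\le\; \frac{(1-s^2_t)U}{M} \;=\; \frac{\varepsilon}{2}.
\end{equation*}
Hence the two largest transition targets are separated by at most half a discretization increment, and this is exactly the computation in which the factor of $2$ in the definition of $U$ is consumed.

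The decisive step is then to translate this continuous gap bound into the discrete statement needed for condition 4. By Lemma~\ref{lema3}, $q_t(\cdot \mid s^2_t, a^2_t)$ is a deterministic $0/1$ function whose jump occurs at the \emph{rounded} target $\mathrm{round}(j^2_A)$, and likewise for the other three targets. Because $j^2_A$ and $\max(j^2_B, j^2_C)$ differ by at most $\varepsilon/2$, I would argue that their images under $\mathrm{round}(\cdot)$ coincide, so that no grid point $k \in S^2$ can satisfy $\max(j^2_B, j^2_C) < k \le j^2_A$; this empties case iv. Parts i, ii, iii, and v of the interval decomposition from the proof of Lemma~\ref{Thm1} are unaffected by the restriction, so subadditivity (condition 4) is restored while conditions 1, 2, 3, and 5 continue to hold. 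With all five sufficient conditions in force, I would invoke the standard monotone-policy theorem (as in Puterman) to conclude the existence of optimal decision rules $d^*_t : S \rightarrow A_{s_t}$ that are monotone non-increasing in $s^2_t$ for every $t = 1, \dots, N-1$.

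The main obstacle I anticipate is not the algebra, which is routine, but the careful handling of the $\mathrm{round}(\cdot)$ operator: one must show that a separation of at most $\varepsilon/2$ between the continuous transition targets guarantees equality of their discretized images, since a half-open interval of width $\varepsilon/2$ can \emph{in principle} still straddle a grid boundary and thereby contain a grid point. Pinning down the rounding convention, and confirming that the $\varepsilon/2$ slack is precisely what forces $\mathrm{round}(j^2_A)=\mathrm{round}\bigl(\max(j^2_B,j^2_C)\bigr)$ and hence rules out a straddling grid point, is where the genuine work of the proof lies.
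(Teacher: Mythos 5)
Your proposal follows essentially the same route as the paper's own proof: it reuses conditions 1, 2, 3, and 5 from Lemma~\ref{Thm1}, computes $j^2_A - j^2_C = \frac{(1-s^2_t)(a^2_t - \widetilde{a^2_t})}{M} \le \frac{\varepsilon}{2}$ under the bound $U$, concludes that $j^2_A$ and $j^2_C$ collapse to the same discretized state so the problematic interval $\left(\max(j^2_B, j^2_C),\, j^2_A\right]$ contains no grid point, and then invokes the standard monotone-policy theorem. The rounding subtlety you flag at the end is real, but the paper resolves it no more carefully than you do --- it simply asserts that a gap of at most $\varepsilon/2$ means the two targets ``represent the same point'' --- so your attempt matches the published argument in both substance and level of rigor.
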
 

\begin{proof} 
We prove this monotonicity claim by showing the aforementioned five conditions are satisfied \citep{Puterman05}. When we fix the first dimension of the state and action, in Lemma~\ref{Thm1}, we show that conditions i, ii, iii, and v are true.  Thus, it suffices to prove condition iv is true in order to show the existence of monotone optimal decision rules for the second dimension. First, we prove the following claim and use it to show the subadditivity condition and in turn, monotonicity. \\
\begin{claim}
$j^2_A$ and $j^2_C$ represents the same point if $a^2_t - \widetilde{a^2_t} \le U$. 
\end{claim}
\begin{proof}
 We know $j^2_A$ and $j^2_C$ represents the same point if $j^2_A - j^2_C \le \frac{\varepsilon}{2}$ due to the precision in rounding.
Thus,  $$j^2_A - j^2_C \le \frac{\varepsilon}{2} \Leftrightarrow $$
$$ \frac{1}{M}\left[s^2_t ( M - a^2_t ) + a^2_t - \delta^C (a^{1+}_t + a^{1-}_t)\right] - \frac{1}{M}\left[s^2_t ( M - \widetilde{a^2_t} ) + \widetilde{a^2_t} - \delta^C (a^{1+}_t + a^{1-}_t)\right] \le \frac{\varepsilon}{2} \Leftrightarrow$$ 
$$ \frac{1}{M}\left[s^2_t ( M - a^2_t - ( M - \widetilde{a^2_t} ) + a^2_t -  \widetilde{a^2_t}\right] \le \frac{\varepsilon}{2} \Leftrightarrow$$ 
$$ \frac{1}{M}\left[s^2_t (  - a^2_t + \widetilde{a^2_t} ) + a^2_t -  \widetilde{a^2_t}\right] \le \frac{\varepsilon}{2} \Leftrightarrow$$ 
$$ \frac{1}{M}\left[(a^2_t - \widetilde{a^2_t})( 1- s^2_t )\right]  \le \frac{\varepsilon}{2} \Leftrightarrow$$ 
$$ (a^2_t - \widetilde{a^2_t}) \le \frac{M\varepsilon}{2( 1- s^2_t )}.$$
Let  $\widetilde{a^2_t} =0 $ to get the least upper-bound for the number of batteries to be replaced, we will have 
$$ a^2_t  \le \frac{M\varepsilon}{2( 1- s^2_t )}.$$ 
\end{proof}

\noindent With the case that $j^2_A = j^2_C$, we have $$j^2_A = j^2_C \ge j^2_B \ge j^2_D.$$
Now, we revisit the condition, we seek to show that $q_t(k \mid s^2_t , a^2_t)$  is subadditive on $S^2 \times A'$ for every $k \in S^2$. \\
It suffices to show that $$q_t(k \mid s^2_t , a^2_t) + q_t(k \mid \widetilde{s^2 _t} , \widetilde{a^2 _t}) \le q_t(k \mid \widetilde{s^2 _t} , a^2_t) + q_t(k \mid s^2_t , \widetilde{a^2 _t}),$$  for every $k \in S^2$ if $s^2_t \ge  \widetilde{s^2 _t}$ and $a^2_t \ge  \widetilde{a^2 _t}$.\\
We divide $S^2$ space into four intervals and show the condition is satisfied for every interval.
 \begin{enumerate}[i.]
\item $ 0 \le k \le  j^2_D$ 
$$q_t(k \mid s^2_t , a^2_t) + q_t(k \mid \widetilde{s^2 _t} , \widetilde{a^2 _t}) \le q_t(k \mid \widetilde{s^2 _t} , a^2_t) + q_t(k \mid s^2_t , \widetilde{a^2 _t}) \Leftrightarrow$$
$$ 1+1 \le 1+1.$$

\item $ j^2_D < k \le  j^2_B$ 
$$q_t(k \mid s^2_t , a^2_t) + q_t(k \mid \widetilde{s^2 _t} , \widetilde{a^2 _t}) \le q_t(k \mid \widetilde{s^2 _t} , a^2_t) + q_t(k \mid s^2_t , \widetilde{a^2 _t}) \Leftrightarrow$$
$$ 1+0 \le 1+1. $$

\item $ j^2_B < k \le   j^2_C = j^2_A$ 
 $$q_t(k \mid s^2_t , a^2_t) + q_t(k \mid \widetilde{s^2 _t} , \widetilde{a^2 _t}) \le q_t(k \mid \widetilde{s^2 _t} , a^2_t) + q_t(k \mid s^2_t , \widetilde{a^2 _t}) \Leftrightarrow$$
$$ 1+0 \le 1+0. $$

\item $ j^2_A < k   $ 
 $$q_t(k \mid s^2_t , a^2_t) + q_t(k \mid \widetilde{s^2 _t} , \widetilde{a^2 _t}) \le q_t(k \mid \widetilde{s^2 _t} , a^2_t) + q_t(k \mid s^2_t , \widetilde{a^2 _t}) \Leftrightarrow$$
$$ 0+0 \le 0+0. $$
\end{enumerate}

We conclude that $q_t(k \mid s^2_t , a^2_t)$ is subadditive on $S^2 \times A'$ for every $k \in S^2$. As all conditions are valid, we deduce that there exists monotone optimal decision rules in the second dimension of the \textcolor{black}{state} for the stochastic SAIRP when there is an upper-bound $U$ on the number of batteries replaced \textcolor{black}{at} each decision epoch.

\end{proof}

\clearpage
\subsection{Monotonicity of Value Functions}\label{APP2}



In this Appendix, we prove that the MDP value function for the stochastic SAIRP is monotone non-decreasing in the first, second, and both dimensions of the state. In Theorems~\ref{Thm3} and \ref{Thm4}, we show the monotonicity of the value function regarding $s^2_t$ and $s^1_t$, respectively, and in Theorem~\ref{Thm5} we prove that the value function is monotone in $(s^1_t, s^2_t$). To prove these theorems, we need to ensure that four conditions are satisfied as given by Papadakia and Powell \cite{Papadaki07} and Jiang and Powell \cite{Jiang15}. These two articles use different notation, thus, for clarity, we define $S$ as the state space, $A$ as the action space, and the transition function as $f : S \times A \rightarrow S$.  To prove the theorems, we first state key definitions used.

\begin{definition}{}\label{Def2}
Partial ordering operator $\preceq$ on the $N$-dimensional  set $S$ is defined as $s  \preceq s' $ for any $s, s' \in S$, if $s(i) \leq s'(i)$ for all $i \in \{1, 2, \dots N\}$ \citep{Papadaki07}.
\end{definition}

\begin{definition}{}\label{Def3}
An $N$-dimensional  real-valued function $F$ is partially non-decreasing on the set $S$, if for all $ s^-, s^+ \in S$ where $ s^- \preceq s^+$, we have $F(s^-) \le F(s^+)$ \citep{Papadaki07}.
\end{definition}

\begin{theorem}\label{Thm3} The MDP value function of the stochastic SAIRP is monotonically non-decreasing in $s^2_t$.      
\end{theorem} 
\begin{proof} 
Fixing the first dimension of the state space, the MDP value function is monotone if the following four conditions are satisfied \citep{Papadaki07, Jiang15}. 
\begin{enumerate}

\item For $e \succeq 0$ we have $f(s+e, a) \succeq f(s,a)$ for all $a \in A$ \citep{Papadaki07}. Equivalently, if every $s^2_t, \widetilde{s^2_t} \in S^2$ with $s^2_t \ge \widetilde{s^2_t}$, action $a_t=(a^1_t,a^2_t) \in A$ is taken, and demand equals $D_t$, the second state transition function $f^2 $ satisfies:
$$f^2 (s^1_t, s^2_t, a^1_t , a^2_t) \ge f^2  (s^1_t, \widetilde{s^2_t}, a^1_t , a^2_t).$$
From Equation \eqref{Added1}, if the beginning state is $s^2_t$, then $f^2 $ equals $j^2$, and if the beginning state is $\widetilde{s^2_t}$, then $f^2$ equals $\widetilde{j^2}$.  Because $a^2_t, a^1_t, s^1_t$ are constant, using Lemma~\ref{lema1} we prove our claim as
$$f^2 (s^1_t, s^2_t, a^1_t , a^2_t) = j^2 \ge  (s^1_t, \widetilde{s^2_t}, a^1_t , a^2_t) = \widetilde{j^2}.$$
\item The one period cost function $r_t (s, a)$ is partially non-decreasing in $ s \in S$ for all $a \in A$, $t = 0, 1, \dots, N-1$ \citep{Papadaki07}.  It is suffices to show for every $t < N$, $s^2_t, \widetilde{s^2_t} \in S^2$ with $s^2_t \ge \widetilde{s^2_t}$, if we take action $a_t=(a^1_t, a^2_t) \in A$, then the one period reward function satisfies
\begin{equation}\label{eq:MV-SecondDimsion}
r_t(s^1_t, s^2_t, a^1_t , a^2_t, D_t) \ge r_t(s^1_t, \widetilde{s^2_t}, a^1_t , a^2_t ,D_t). 
\end{equation}
We show that we can reduce Equation \eqref{eq:MV-SecondDimsion} to an always true statement.
\begin{center}
$r_t (s^1_t, s^2_t, a^1_t , a^2_t, D_t) \ge r_t(s^1_t, \widetilde{s^2_t}, a^1_t , a^2_t ,D_t )  \Leftrightarrow $ \\
$ \frac{\beta (1+ s^2_t - 2\theta)} { 1- \theta} \left[\min\{D_t, s^1_t - a ^{1-}_t\}\right] - K_t a^{1+}_t + J_t a^{1-}_t - L_t a^2_t \ge \frac{\beta (1+ \widetilde{s^2_t} - 2\theta)} { 1- \theta} \left[\min\{D_t , s^1_t - a ^{1-}_t\}\right] - K_t a^{1+}_t + J_t a^{1-}_t - L_t a^2_t  \Leftrightarrow $ \\
$\frac{\beta (1+ s^2_t - 2\theta)} { 1- \theta} \ge \frac{\beta (1+ \widetilde{s^2_t} - 2\theta)}{ 1- \theta}   \Leftrightarrow $ \\
$ s^2_t  \ge \widetilde{s^2 _t}. $
\end{center}
\item  The terminal cost function $r_N (s)$ is partially non-decreasing in $s \in S$ \citep{Papadaki07}. It suffices to show that for every $s^2_N, \widetilde{s^2_N} \in S^2$ with $s^2_N \ge \widetilde{s^2_N}$, that $r_N(s_N) \ge r_N (\widetilde{s_N}).$\\
We proved this claim in condition (5) of Lemma~\ref{Thm1} and Theorem~\ref{Thm2}. \\
\item For each $t <N$, $s^2_t$ and $D_{t+1}$ are independent \citep{Jiang15}. 

In our model, demand is a random variable and does not depend on the current state or action, including $s^2_t$.  \\ 

\end{enumerate}
As all the conditions are valid for the stochastic SAIRP, we can conclude that the value function is monotone in $s^2_t$. 

\end{proof}

\begin{theorem}\label{Thm4} The MDP value function of the stochastic SAIRP is monotonically non-decreasing in $s^1_t$.      
\end{theorem} 
\begin{proof} 
Fixing the second dimension of the state space, the MDP value function is monotone if the following four conditions are satisfied \citep{Papadaki07, Jiang15}.
\begin{enumerate}

\item For $e \succeq 0$ we have $f (s+e, a) \succeq f (s,a)$ for all $a \in A$ \citep{Papadaki07}.  It suffices to show for every $s^1_t, \widetilde{s^1_t} \in S^1$ with $s^1_t \ge \widetilde{s^1_t}$, if we take action $a_t=(a^1_t,a^2_t) \in A$ and demand equals $D_t$, the first state transition function $f^1$ satisfies
$$f^1 (s^1_t, s^2_t, a^1_t , a^2_t, D_t) \ge f^1  (\widetilde{s^1_t}, s^2_t, a^1_t , a^2_t ,D_t).$$
Using Equation \eqref{eq:3}, if the beginning state is $s^1_t$, then $f^1$ equals $j^1$, and if the starting state is $\widetilde{s^1_t}$, then $f^1$ equals $\widetilde{j^1}$.  It suffices to show that
$$j^1 \ge \widetilde{j^1} \Leftrightarrow$$
$$s^1_{t} + a^2_{t}+a^{1+}_{t}- a^{1-}_{t}-\min\{ D_t, s^1_t-a^{1-}_t \} \ge \widetilde{s^1_{t}} + a^2_{t}+a^{1+}_{t}- a^{1-}_{t}-\min\{ D_t, \widetilde{s^1_t}-a^{1-}_t \}  \Leftrightarrow$$
\begin{equation} \label{eq:14}
s^1_{t} -\min\{ D_t, s^1_t-a^{1-}_t \} \ge \widetilde{s^1_{t}} -\min\{ D_t, \widetilde{s^1_t}-a^{1-}_t \}.  
\end{equation}
Three cases can happen for the stochastic demand $D_t$, and we show that  we can reduce Equation \eqref{eq:14} to an always true statement in all cases.
 \begin{enumerate}[i.]
\item $s^1_t -a^{1-}_t \ge \widetilde{s^1_t}-a^{1-}_t  > D_t$ 
$$s^1_{t} - D_t  \ge \widetilde{s^1_{t}} - D_t  \Leftrightarrow $$
$$s^1_{t}   \ge \widetilde{s^1_{t}}. $$

\item $s^1_t -a^{1-}_t \ge D_t \ge \widetilde{s^1_t}-a^{1-}_t $ 
$$s^1_{t} - D_t  \ge \widetilde{s^1_{t}} - (\widetilde{s^1_t}-a^{1-}_t)  \Leftrightarrow $$
$$s^1_t -a^{1-}_t \ge D_t.$$

\item $  D_t > s^1_t -a^{1-}_t \ge \widetilde{s^1_t}-a^{1-}_t $ 
$$s^1_{t} - (s^1_t -a^{1-}_t)  \ge \widetilde{s^1_{t}} - (\widetilde{s^1_t}-a^{1-}_t)  \Leftrightarrow $$
$$a^{1-}_t \ge a^{1-}_t.$$
\end{enumerate}
From i, ii, and iii, we conclude that $j^1 \ge \widetilde{j^1}$. Thus,
$$f^1 (s^1_t, s^2_t, a^1_t , a^2_t, D_t) \ge f^1  (\widetilde{s^1_t}, s^2_t, a^1_t , a^2_t ,D_t).$$

\item The one period cost function $r_t (s, a)$ is partially non-decreasing in $ s \in S$ for all $a \in A$, $t = 0, 1, \dots, N-1$ \citep{Papadaki07}. It suffices to show for every $t < N$, $s^1_t, \widetilde{s^1_t} \in S^1$ with $s^1_t \ge \widetilde{s^1_t}$, if we take action $a_t=(a^1_t,a^2_t) \in A$, then the one period reward function satisfies
$$r_t(s^1_t, s^2_t, a^1_t , a^2_t, D_t) \ge r_t (\widetilde{s^1_t}, s^2_t, a^1_t , a^2_t ,D_t).$$
Using Equation \eqref{eq:imRew}, we reduce the following statement to an always true statement. 
\begin{center}
$r_t(s^1_t, s^2_t, a^1_t , a^2_t, D_t) \ge r_t (\widetilde{s^1_t}, s^2_t, a^1_t , a^2_t ,D_t) 
\Leftrightarrow $
$\rho_{s^2_{t}} (\min\{ D_t, s^1_t -a^{1-}_t \}) - K_t a^{1+}_t + J_t a^{1-}_t - L_t a^2_t \ge \rho_{s^2_{t}} (\min\{ D_t, \widetilde{s^1_t}-a^{1-}_t \}) - K_t a^{1+}_t + J_t a^{1-}_t - L_t a^2_t \Leftrightarrow$\\
\end{center}
\begin{equation}\label{eq:15}
 \min\{ D_t, s^1_t-a^{1-}_t \} \ge  \min\{ D_t, \widetilde{s^1_t}-a^{1-}_t \}.
\end{equation}
The second dimension is fixed, so we could cancel out $\rho_{s^2_{t}}$ from both sides. \\
Similar to part 1, three cases can happen.
 \begin{enumerate}[i.]
\item $s^1_t -a^{1-}_t \ge \widetilde{s^1_t}-a^{1-}_t  > D_t$ \\
We can reduce Equation \eqref{eq:15} to $D_t   \ge D_t $ which is always true.
\item $s^1_t -a^{1-}_t \ge D_t \ge \widetilde{s^1_t}-a^{1-}_t $ \\
We can reduce Equation \eqref{eq:15} to $D_t   \ge  \widetilde{s^1_t}-a^{1-}_t$ which is true for this case.
\item $  D_t > s^1_t -a^{1-}_t \ge \widetilde{s^1_t}-a^{1-}_t $ \\
We can reduce Equation \eqref{eq:15} to $s^1_t   \ge  \widetilde{s^1_t}$  which is always true.
\end{enumerate}
So, we can conclude that
$$r_t(s^1_t, s^2_t, a^1_t , a^2_t, D_t) \ge r_t (\widetilde{s^1_t}, s^2_t, a^1_t , a^2_t ,D_t).$$

\item  The terminal cost function $r_N (s)$ is partially non-decreasing in $s \in S$ \citep{Papadaki07}. It suffices to show that for every $s^1_N, \widetilde{s^1_N} \in S^1$ with $s^1_N \ge \widetilde{s^1_N}$, that $r_N(s_N) \ge r_N (\widetilde{s_N}).$

As the second state is fixed equal to $s^2_N$, two possible cases can happen.

 \begin{enumerate}[i.]
\item $s^2_N < \theta$ 
$$r_N(s_N)  \ge r_N (\widetilde{s_N}) \Leftrightarrow $$
 $$0 \ge 0 .$$ 
\item $s^2_N \ge \theta$. 
$$r_N(s_N)  \ge r_N (\widetilde{s_N}) \Leftrightarrow $$
 $$\rho_{s^2_{N}} s^1_N \ge \rho_{s^2_{N}} \widetilde{s^1_N}.$$
$\rho_{s^2_{N}}$ is not a function of the first dimension, so we can cancel it out from both sides.
$$ s^1_N \ge \widetilde{s^1_N}.$$
\end{enumerate}
Thus, we conclude that
$$r_N(s_N)  \ge r_N (\widetilde{s_N}).$$
\item For each $t <N$, $s^1_t$ and $D_{t+1}$ are independent \citep{Jiang15}. \\
In our model, demand is a random variable and does not depend on the current state or action, including $s^1_t$.  
\end{enumerate}
As all the conditions are valid for the stochastic SAIRP, we conclude that the value function is monotone in $s^1_t$. 

\end{proof}


If there exists a monotone optimal policy, then Theorem~\ref{Thm4} is implied.  Widrick et al. \cite{Widrick16} proved that there exists a monotone optimal policy in $s^1_t$ \emph{only} when demand is governed by a non-increasing discrete distribution.  Thus, Theorem~\ref{Thm4} provides a stronger result, as it does not depend on the probability distribution of demand.

\begin{theorem}\label{Thm5} The MDP value function of the stochastic SAIRP is monotonically non-decreasing in $(s^1_t, s^2_t)$.      
\end{theorem} 
\begin{proof} 
The  MDP value function is monotone if the following four conditions are satisfied \citep{Papadaki07, Jiang15}.
\begin{enumerate}
\item For $e \succeq 0$ we have $f (s+e, a) \succeq f (s,a)$ for all $a \in A$ \citep{Papadaki07}. Using Definition \ref{Def2} for partially ordered functions, it suffices to show that for every $s_t, \widetilde{s_t} \in (S^1 \times S^2)$ with $s^1_t \ge \widetilde{s^1_t}$ and $s^2_t \ge \widetilde{s^2_t}$, if we take action $a_t=(a^1_t,a^2_t) \in A$ and demand equals $D_t$, then the $i^{th}$ state transition function $f^i $  satisfies
$$f^i (s_t, a _t,D_t) \ge f^i  (\widetilde{s_t}, a_t ,D_t) \quad \forall i =1,2. $$ 
We show the relationship between transition functions for each dimension.

 \begin{enumerate}[i.]
 \item $f^1 (s_t, a_t ,D_t) \ge f^1 (\widetilde{s_t}, a_t ,D_t)$\\
 Using Equation \eqref{eq:3}, we can state that if the beginning state is $s^1_t$, then $f^1 $ equals $j^1$ and  if the starting state is $\widetilde{s^1_t}$, then $f^1 $ equals $\widetilde{j^1}$. We proved the claim when $s^2_t \ge \widetilde{s^2_t} \ge \theta$, in part 1 of Theorem~\ref{Thm4}. 
If $ \theta > s^2_t \ge \widetilde{s^2_t}$, as we are in the absorbing state, the only possible action is $a_t=(0,0)$ that leads to $j^1 = s^1_t \ge\widetilde{ j^1}  = \widetilde{s^1_t}$. Similarly, if $ s^2_t \ge \theta > \widetilde{s^2_t}$, the only allowed action is $a_t=(0,0)$  
because it is the only feasible action for both $s^2_t$ and $\widetilde{s^2_t}$. Thus, $j^1 = s^1_t \ge\widetilde{ j^1}  = \widetilde{s^1_t}$.

\item $f^2 (s_t, a_t ,D_t) \ge f^2  (\widetilde{s_t}, a_t ,D_t)$\\
Using Equation \eqref{Added1}, we reduce $f^2 (s_t, a_t ,D_t) \ge f^2  (\widetilde{s_t}, a_t ,D_t)$ to an always true statement:
$$f^2 (s_t, a_t ,D_t) \ge f^2  (\widetilde{s_t}, a_t ,D_t) \Leftrightarrow$$
$$ \frac{1}{M}\left[s^2_t ( M - a^2_t ) + a^2_t - \delta^C (a^{1+}_t + a^{1-}_t)\right] \ge  \frac{1}{M}\left[\widetilde{s^2_t} ( M - a^2_t ) + a^2_t - \delta^C (a^{1+}_t + a^{1-}_t)\right]  \Leftrightarrow$$
  $$ s^2_t ( M - a^2_t ) + a^2_t - \delta^C (a^{1+}_t + a^{1-}_t) \ge  \widetilde{s^2_t} ( M - a^2_t ) + a^2_t - \delta^C (a^{1+}_t + a^{1-}_t)  \Leftrightarrow$$
    $$ s^2_t ( M - a^2_t )  \ge  \widetilde{s^2_t} ( M - a^2_t ).$$
Because $ M \ge \widetilde {a^2_t }$, we know
 $$ s^2_t   \ge  \widetilde{s^2_t}.$$
The last statement is always true and we prove that $f^2 (s_t, a_t ,D_t) \ge f^2  (\widetilde{s_t}, a_t ,D_t)$.
\end{enumerate}

\item The one period cost function $r_t (s, a)$ is partially non-decreasing in $ s \in S$ for all $a \in A$, $t = 0, 1, \dots, N-1$ \citep{Papadaki07}. Using Definition \ref{Def2} for partially ordered functions, it suffices to show that for every $s_t, \widetilde{s_t}\in (S^1 \times S^2)$ with $s^1_t \ge \widetilde{s^1_t}$ and $s^2_t \ge \widetilde{s^2_t}$, if we take action $a_t=(a^1_t, a^2_t) \in A$, then the one period reward function satisfies
$$r_t(s_t, a_t, D_t) \ge r_t(\widetilde{s_t}, a_t, D_t).$$ 
Using Equation \eqref{eq:imRew}, we reduce the following statement to an always true statement. 
$$r_t(s_t, a_t, D_t) \ge r_t(\widetilde{s_t}, a_t, D_t) \Leftrightarrow$$
$$\rho_{s^2_{t}} (\min\{ D_t, s^1_t -a^{1-}_t \}) - K_t a^{1+}_t + J_t a^{1-}_t - L_t a^2_t \ge \rho_{\widetilde{{s^2_{t}}}} (\min\{ D_t, \widetilde{s^1_t}-a^{1-}_t \}) - K_t a^{1+}_t + J_t a^{1-}_t - L_t a^2_t \Leftrightarrow$$
$$\bigg(\frac{\beta(1+s^2_t - 2 \theta)}{1- \theta}\bigg) (\min\{ D_t, s^1_t -a^{1-}_t \})  \ge \bigg(\frac{\beta(1+\widetilde{s^2_t} - 2 \theta)}{1- \theta} \bigg)(\min\{ D_t, \widetilde{s^1_t}-a^{1-}_t \})  \Leftrightarrow$$
\begin{equation}\label{eq:17}
(1+s^2_t - 2 \theta) (\min\{ D_t, s^1_t-a^{1-}_t \}) \ge (1+\widetilde{s^2_t}- 2 \theta) (\min\{ D_t, \widetilde{s^1_t}-a^{1-}_t \}).
\end{equation}

Three cases can happen for the stochastic demand $D_t$. We show that we can reduce Equation \eqref{eq:17} to an always true statement in all cases.
 \begin{enumerate}[i.]

\item $  D_t > s^1_t -a^{1-}_t \ge \widetilde{s^1_t}-a^{1-}_t $ \\
Because $(1+s^2_t - 2 \theta) \ge (1+\widetilde{s^2_t} - 2 \theta) \ge 0$ and $\min\{ D_t, s^1_t-a^{1-}_t \} \ge \min\{ D_t, \widetilde{s^1_t}-a^{1-}_t \} \ge 0$, we can conclude that Equation \eqref{eq:17} is always true.

\item $s^1_t -a^{1-}_t \ge D_t \ge \widetilde{s^1_t}-a^{1-}_t $ \\
We know $(1+s^2_t - 2 \theta) \ge (1+\widetilde{s^2_t} - 2 \theta)$, thus it suffices to show that

$$(1+s^2_t - 2 \theta) (\min\{ D_t, s^1_t-a^{1-}_t \}) \ge (1+s^2_t- 2 \theta) (\min\{ D_t, \widetilde{s^1_t}-a^{1-}_t \}) \Leftrightarrow$$
$$\min\{ D_t, s^1_t-a^{1-}_t \} \ge \min\{ D_t, \widetilde{s^1_t}-a^{1-}_t \} \Leftrightarrow$$
$$D_t \ge  \widetilde{s^1_t}-a^{1-}_t.$$
The last statement is always true for this case.  

\item $s^1_t -a^{1-}_t \ge \widetilde{s^1_t}-a^{1-}_t  > D_t$ \\
Using the same approach as the previous part, we can reduce Equation \eqref{eq:17} to $D_t   \ge D_t $ which is always true. 
$$(1+s^2_t - 2 \theta) (\min\{ D_t, s^1_t-a^{1-}_t \}) \ge (1+s^2_t- 2 \theta) (\min\{ D_t, \widetilde{s^1_t}-a^{1-}_t \}) \Leftrightarrow$$
$$\min\{ D_t, s^1_t-a^{1-}_t \} \ge \min\{ D_t, \widetilde{s^1_t}-a^{1-}_t \} \Leftrightarrow$$
$$D_t \ge D_t.$$

\end{enumerate}
So, we conclude that
$$r_t(s_t, a_t, D_t) \ge r_t(\widetilde{s_t}, a_t, D_t).$$

\item  The terminal cost function $r_N (s)$ is partially non-decreasing in $s \in S$ \citep{Papadaki07}. It suffices to show that for every $s_N, \widetilde{s_N} \in (S^1 \times S^2) $ with $s^1_N \ge \widetilde{s^1_N}$ and $s^2_N \ge \widetilde{s^2_N}$, that $r_N(s_N) \ge r_N (\widetilde{s_N}).$
Using Equation \eqref{eq:12}, three cases can happen. 
 \begin{enumerate}[i.]

\item $s^2_N \ge \widetilde{s^2_N} \ge \theta$. 
$$r_N(s_N)  \ge r_N (\widetilde{s_N}) \Leftrightarrow $$
 $$\rho_{s^2_{N}} s^1_N \ge \rho_{\widetilde{s^2_{N}}} \widetilde{s^1_N}.$$
Because $\frac{\beta(1+s^2_t - 2 \theta)}{1-\theta} \ge \frac{\beta(1+\widetilde{s^2_t} - 2 \theta)}{1-\theta}$ and $ s^1_N \ge \widetilde{s^1_N}$ the last statement is true.

\item $s^2_N \ge \theta > \widetilde{s^2_N}$ 
$$r_N(s_N)  \ge r_N (\widetilde{s_N}) \Leftrightarrow $$
 $$\rho_{s^2_{N}} s^1_N \ge 0 .$$ 
 
\item $\theta > s^2_N \ge \widetilde{s^2_N} $  
 $$r_N(s_N)  \ge r_N (\widetilde{s_N}) \Leftrightarrow $$
 $$ 0 \ge 0 .$$
 
\end{enumerate}
\item For each $t <N$, $(s^1_t, s^2_t) $ and $D_{t+1}$ are independent \citep{Jiang15}.  \\
In our model, demand is a random variable and does not depend on the current state or action, including $(s^1_t, s^2_t)$.  
\end{enumerate}
As all the conditions are valid for the stochastic SAIRP, we conclude that the value function is monotone in $(s^1_t, s^2_t)$. 

\end{proof}

\clearpage

\subsection{Algorithmic and Experimental Parameter Settings}\label{APP3}

\begin{table}[!ht]\centering
\ra{1}
\resizebox{0.9\textwidth}{!}{
\begin{tabular}{@{}cl l@{}}\toprule

	Parameter 	&	Value 		& Description \\ \midrule
$\overline{M}$ & 2	&		The starting number of batteries used for the small SAIRPs solved using BI \\
 $M$ & 7 & The number of batteries in the modest-sized SAIRPs \\
 $M$ & 100 & The number of batteries in the realistic-sized SAIRPs \\
  $\overline{T}$ & 168 & The time horizon (number of hours)  in the small SAIRP \\
  $T$ & 168 & The time horizon (number of hours) in the modest-sized SAIRPs \\
 $T$ & 744& The time horizon (number of hours) in the realistic-sized SAIRPs \\
$\theta$ & 80\% & The replacement threshold \\
$\varepsilon$ & 0.001 & The capacity increment used in discretizing the $2^{nd}$ dimension of the state  \\
 $maxIteration$+1 & 	3 &		The maximum number of regression-based initialization iterations \\  
$\tau$ & 500000&			The maximum number of core ADP iterations in the modest-sized SAIRPs \\
$\tau$ & 100000&			The maximum number of core ADP iterations in the realistic-sized SAIRPs \\
$w$ & 25000 & The harmonic stepsize parameter \\
$\mu_1$ & 600 & The STC stepsize parameter \\
$\mu_2$ & 1000& The STC stepsize parameter \\
$\zeta$ & 0.7& The STC stepsize parameter \\
$\alpha_0$ & 1 & The STC stepsize parameter \\ \bottomrule
\end{tabular}}
\label{Parameters}
\end{table}

\clearpage

\bibliographystyle{elsarticle-num}

\bibliography{TRE} 

\end{document}